\documentclass[a4paper,11pt,reqno]{amsart}
\usepackage{amssymb}
\usepackage{amsmath}
\usepackage{amsopn}
\usepackage{graphicx}
\usepackage{longtable}
\usepackage{mathrsfs}
\usepackage{amsmath}
\usepackage{amsfonts}
\usepackage{mathtools}
\usepackage{amsthm}
\usepackage{comment}
\usepackage{mwe}
\usepackage{tikz}
\usepackage{pstool}
\usepackage{psfrag}
\usepackage{epstopdf}
\usepackage{pstricks}
\usepackage{tikz}
\usepackage{marginnote}
\usetikzlibrary{shapes,backgrounds,decorations}
\usepackage{tikz-cd}
\usetikzlibrary{matrix,arrows,decorations.pathmorphing}
\usepackage{textcomp}     % for \textquotesingle macro
\usetikzlibrary{matrix,arrows}
\usepackage{todonotes}
\usepackage{marginnote}
\usepackage{lettrine}
\usepackage{booktabs}
\usepackage[all]{xy}
\usepackage{enumitem}
\PassOptionsToPackage{table}{xcolor}
\usepackage{xcolor,colortbl}
\definecolor{backcolour}{rgb}{0.95,0.95,0.95} 
\setlist{nosep}
\usepackage{latexsym}
\usepackage{eepic}
\usepackage{epsfig}
\usepackage{pst-node}
\usepackage{lipsum}
\usepackage{hyperref}
\hypersetup{
	colorlinks   = true, %Colours links instead of ugly boxes
	urlcolor     = blue, %Colour for external hyperlinks
	linkcolor    = blue, %Colour of internal links
	citecolor   = red %Colour of citations
}
\usepackage[scale=0.75]{geometry}
\usepackage{amscd}
\usepackage{mathrsfs}
\usepackage[english]{babel}

\usepackage[utf8]{inputenc}
\usepackage[%
style=alphabetic,sorting=nyt,
maxbibnames=10,maxcitenames=10,
sortcites=true,doi=false,maxalphanames=99,
firstinits=true,hyperref,backend=bibtex]{biblatex}
\renewbibmacro{in:}{%
	\ifentrytype{article}{}{\printtext{\bibstring{in}\intitlepunct}}}
\usepackage[babel]{csquotes}
\usepackage{guit}

\usepackage{multicol}

\DeclareFieldFormat
[article,inbook,incollection,inproceedings,patent,thesis,unpublished]
{title}{{#1\isdot}}
\DeclareFieldFormat{pages}{#1}
\usepackage{cancel}
\AtEveryBibitem{%
	\ifentrytype{book}{
		\clearfield{url}%
		\clearfield{pages}
		\clearfield{isbn}
		\clearfield{urldate}%
		\clearfield{review}%
		\clearfield{series}%%
	}{}
	\ifentrytype{article}{
		\clearfield{url}%
		\clearfield{issn}
		\clearfield{doi}
		\clearfield{isbn}
		\clearfield{urldate}%
		\clearfield{review}%
	}{}
	\ifentrytype{collection}{
		\clearfield{url}%
		\clearfield{urldate}%
		\clearfield{review}%
		\clearfield{series}%%
	}{}
	\ifentrytype{incollection}{
		\clearfield{url}%
		\clearfield{urldate}%
		\clearfield{review}%
		\clearfield{series}%%
	}{}
}

\usepackage[capitalise]{cleveref}
\numberwithin{equation}{section}

\newtheoremstyle{plain2}    % to uniformize styles with the paragraphs
{}            % ABOVESPACE (empty value is the same as default value)
{}            % BELOWSPACE (empty value is the same as degfault value)
{\itshape}    % BODYFONT  (\itshape, \bfseries, \normalfont)
{}            % INDENT (empty value is the same as 0pt, \parindent is the same as the standard indent for new paragraphs)
{\bfseries}   % HEADFONT
{.}           % HEADPUNCT
{5pt plus 1pt minus 1pt}  % HEADSPACE (leave 5pt plus 1pt minus 1pt)
{{\thmnumber{#1} \thmname{#2}{\thmnote{ (#3)}}}}          %CUSTOM-HEAD-SPEC
\theoremstyle{plain2}

\theoremstyle{plain}
\newtheorem{theorem}[equation]{Theorem}
\newtheorem{lemma}[equation]{Lemma}
\newtheorem{prop}[equation]{Proposition}

\newtheorem{defi}[equation]{Definition}

\newtheoremstyle{definition2}    % to uniformize styles with the paragraphs
{}
{}
{\normalfont}
{}
{\bfseries}
{.}
{5pt plus 1pt minus 1pt}
{{\thmnumber{#1} \thmname{#2}{\thmnote{#3}}}}
\theoremstyle{definition2}
\newtheorem{rem}[equation]{Remark}

\newtheorem{notation}[equation]{Notation}

\newtheorem*{theorem*}{Theorem}

\newcommand{\Z}{\ensuremath{\mathbb{Z}}}

\newcommand{\Q}{\ensuremath{\mathbb{Q}}}

\newcommand{\C}{\ensuremath{\mathbb{C}}}

\newcommand{\PP}{\ensuremath{\mathbb{P}}}

\renewcommand{\C}{\ensuremath{\mathbb{C}}}

\newcommand{\Aut}{\ensuremath{\mathrm{Aut}}}

\newcommand{\Sing}{\ensuremath{\mathrm{Sing}}}
\newcommand{\Bl}{\ensuremath{\mathrm{Bl}}}
\newcommand{\Stab}{\mathrm{Stab}}
\newcommand{\Fix}{\mathrm{Fix}}

\newcommand{\ord}{\mathrm{ord}}

\newcommand{\im}{\mathrm{Im}}

\newcommand{\SL}{\mathrm{SL}}
\newcommand{\Gr}{\mathrm{Gr}}

\newcommand{\codim}{\ensuremath{\mathrm{codim}}}

\newcommand{\rk}{\mathrm{rk}}

\newcommand{\PSL}{\mathrm{PSL}}
\newcommand{\PGL}{\mathrm{PGL}}
\newcommand{\diag}{\ensuremath{\mathrm{diag}}}
\newcommand{\om}{\ensuremath{\omega}}
\newcommand{\reg}{\mathrm{reg}}

\usepackage{listings}
\lstdefinelanguage{GAP}{%
  morekeywords={%
    Assert,Info,IsBound,QUIT,%
    TryNextMethod,Unbind,and,break,%
    continue,do,elif,%
    else,end,false,fi,for,%
    function,if,in,local,%
    mod,not,od,or,%
    quit,rec,repeat,return,%
    then,true,until,while%
  },%
  sensitive,%
  morecomment=[l]\#,%
  morestring=[b]",%
  morestring=[b]',%
}[keywords,comments,strings]

\usepackage{listings}

\definecolor{codegreen}{rgb}{0,0.6,0}
\definecolor{codegray}{rgb}{0.5,0.5,0.5}
\definecolor{codepurple}{rgb}{0.58,0,0.82}
\definecolor{backcolour}{rgb}{0.95,0.95,0.95} 

\lstdefinestyle{mystyle}{
    commentstyle=\color{codegreen},
    keywordstyle=\color{magenta},
    numberstyle=\tiny\color{codegray},
    stringstyle=\color{codepurple},
    basicstyle=\ttfamily\footnotesize,
    breakatwhitespace=false,         
    breaklines=true,                              
    keepspaces=true,                 
    showspaces=false,                
    showstringspaces=false,
    showtabs=false,                  
    tabsize=2
}
\title{Terminalizations of quotients\\of Fano varieties of lines on cubic fourfolds}
\author{Enrica Mazzon}
\address{Centre de Mathématiques Laurent Schwartz, Ecole Polytechnique, CNRS, Institut Polytechnique de Paris, Palaiseau, France}
\email{e.mazzon15@alumni.imperial.ac.uk}
\date{}

\begin{document}
\maketitle

\begin{abstract}
	We classify projective terminalizations of quotients of Fano varieties of lines on smooth cubic fourfolds by finite groups of symplectic automorphisms of the underlying cubic.
	We compute the second Betti number and the fundamental group of the regular locus. As a consequence, we identify two new deformation classes of four-dimensional irreducible holomorphic symplectic varieties with $b_2=4$ and simply connected regular locus.
\end{abstract}

\setcounter{tocdepth}{1} % This command is to see only part,chapters and sections in the table of contents
%\tableofcontents 

\section{Introduction}
Irreducible holomorphic symplectic (IHS) varieties are fundamental bulding blocks in the classification of varieties with trivial canonical class and possess exceptionally rich moduli and birational geometry. Beyond the classical families -- Hilbert schemes of points on a K3 surface, generalized Kummer varieties, and O’Grady examples in dimension six and ten -- constructing new deformation types of smooth IHS varieties has proven notoriously difficult.

In the singular setting, however, the situation is more flexible. A particularly fruitful strategy to produce examples of IHS varieties is to study projective terminalizations of quotients of smooth symplectic varieties by groups of symplectic automorphisms. 
This approach has been used in \cite{Fujiki1983,FuMenet2021,Menet2022,BertiniGrossiMauriMazzon2024}. Whereas \cite{Menet2022} gives a complete classification of Fujiki fourfolds (terminalizations of certain quotients of square of $\textrm{K3}$ surfaces), \cite{BertiniGrossiMauriMazzon2024} classifies all terminalizations of quotients of Hilbert schemes of $\textrm{K3}$ surfaces or generalized Kummer varieties by symplectic automorphisms induced from the underlying surface. Together, these works already yield at least 38 distinct deformation classes of four-dimensional irreducible symplectic varieties.

In this paper, we continue the classification program designed by Menet in \cite[Section 1.3]{Menet2022} by studying terminalizations of quotients of Fano varieties of lines on smooth cubic fourfolds by finite groups of symplectic automorphisms of the underlying cubic fourfold. Let $X$ be a smooth cubic fourfold and $F(X)$ its Fano variety of lines, which is a smooth four-dimensional IHS variety of $\textrm{K3}^{[2]}$-type. An automorphism of $X$ is called symplectic if the induced automorphism of $F(X)$ acts trivially on its holomorphic symplectic form.
Although $F(X)$ is deformation equivalent to the Hilbert scheme of two points on a $\textrm{K3}$ surface $S$, symplectic automorphisms of $F(X)$ induced by $X$ are in general not deformations of automorphisms of $S^{[2]}$ induced by $S$ previously studied in \cite{BertiniGrossiMauriMazzon2024,Menet2022}. Therefore, studying terminalizations of the quotients $F(X)/G$, where $G$ is a group of symplectic automorphisms of $X$, can provide genuinely new deformation classes of terminalizations.

\begin{theorem}\label{intro:thm}
Let $X$ be a smooth cubic fourfold.
For any finite group $G$ acting symplectically on $X$, the second Betti number and the fundamental group of the regular locus of a projective terminalization $Y$ of the quotient $F(X)/G$ are listed in Table \ref{table:term}.
\end{theorem}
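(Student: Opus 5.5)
The plan has four steps: enumerate the possible groups, describe the singularities of $F(X)/G$, compute $b_2(Y)$ through invariant cohomology plus exceptional divisors, and compute $\pi_1(Y_{\reg})$ from the orbifold structure.

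\textbf{Step 1: the groups.} I would start from the classification of finite groups of symplectic automorphisms of smooth cubic fourfolds due to Laza--Zheng. This gives a finite list of groups $G$ up to conjugation, together with the invariant lattice and the coinvariant lattice inside $H^4(X,\Z)_{\mathrm{prim}}$.

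By Beauville--Donagi, the Abel--Jacobi map identifies $H^2(F(X),\Z)$ with $H^4(X,\Z)$, compatibly with the group action. This transports the data to $F(X)$, so $H^2(F(X),\Q)^G$ is known. In particular $b_2(F(X)/G)=\rk H^2(F(X),\Z)^G$ can be read off from the rank of the coinvariant lattice, namely $23-\rk S_G(X)$.

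\textbf{Step 2: singularities of the quotient.} For each $G$ I would determine the fixed loci of all nontrivial elements $g\in G$ on $F(X)$. A symplectic $g$ acts on $F(X)$ with fixed components that are points, symplectic surfaces, or (for involutions) possibly more.

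Concretely, I would diagonalize $g$ on $\C^6$. The lines fixed by $g$ are of two kinds: lines contained in eigenspaces, and lines spanned by eigenvectors from two distinct eigenspaces. Imposing that such a line lies in $X$ gives the fixed locus explicitly. The local isotropy at each point is then a finite subgroup of $\mathrm{Sp}(4)$. Using the local models from \cite{BertiniGrossiMauriMazzon2024,Menet2022}, I would decide which singular strata admit a crepant resolution. These are the codimension-2 strata, which are transversally $A_n$ (or ADE) surface singularities, and they contribute exceptional divisors. Isolated points with non-resolvable isotropy, such as $\C^4/\pm1$-type singularities, remain terminal.

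\textbf{Step 3: $b_2(Y)$.} A terminalization $Y\to F(X)/G$ extracts exactly the crepant divisors. These lie over the codimension-2 strata, one for each conjugacy class of $G$-orbits of fixed surfaces, refined by the monodromy action on the irreducible components of the transversal ADE resolution. This gives
\[
b_2(Y)=\rk H^2(F(X),\Q)^G+\#\{\text{crepant exceptional prime divisors}\}.
\]

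\textbf{Step 4: $\pi_1(Y_{\reg})$.} I would use that $Y_{\reg}$ is the complement, in the partial resolution, of the codimension-$\ge 4$ terminal points. It therefore has the same fundamental group as the orbifold $(F(X)\setminus \mathrm{Sing\,locus})/G$ with the divisorial strata filled in. Since $F(X)$ is simply connected and the removed locus has real codimension $\ge4$, this group is $G/N$. Here $N$ is the normal subgroup generated by the elements having a fixed surface, i.e.\ the elements whose fixed locus has codimension 2. Equivalently $N$ is generated by the stabilizers of points lying on the resolved strata, while the isolated terminal points contribute no generators.

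\textbf{Main obstacle.} I expect the hardest part to be the case-by-case fixed-locus analysis in Step 2. This includes the monodromy on the components of the transversal ADE fibres, which is needed to count prime divisors correctly. For larger groups I would try to reduce to cyclic and Sylow subgroups and then use GAP to compute conjugacy classes and the subgroup $N$.
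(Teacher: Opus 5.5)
Your architecture matches the paper's. You take $b_2(Y)$ to be $\rk H^2(F(X),\Z)^G$ plus the number of crepant exceptional divisors, and $\pi_1(Y^{\reg})\simeq G/N$ with $N$ generated by the elements having a codimension-$2$ fixed component, as in Proposition 8.1 of Bertini--Grossi--Mauri--Mazzon. The step you flag as the main obstacle is already settled by Fu's theorem (Proposition~\ref{prop:fixed loci codim2}). Only involutions (fixing a K3 surface and $28$ points) and order-$3$ elements with normal form $\diag(1,1,1,\om,\om,\om)$ (fixing an abelian surface) have codimension-$2$ fixed loci. Together with $\Fix(F_g)=\langle g\rangle$ and $\Stab(F_g)=N_G(g)$, this leaves only transversal $A_1$ and $A_2$ singularities. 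The two $A_2$ divisors are exchanged iff $N_G(g)\setminus C_G(g)$ contains an element of even order, which gives the formula of Theorem~\ref{thm:b2}.

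\textbf{The gap is in Step~1}, where you treat the invariant lattice as supplied by the Laza--Zheng classification. It is not. The classification describes the groups $\Aut^s(X)$, but the rank of $H^2(F(X),\Z)^G$ for a subgroup $G\subseteq\Aut^s(X)$ depends on how $G$ is embedded, not only on its isomorphism type. H{\"o}hn--Mason's Table~12 allows several ranks for $(3,1),(6,2),(9,2),(12,1),(18,3),(27,5),(54,13),(108,37)$. Worse, $C_3^4\rtimes A_6$ contains two non-conjugate subgroups isomorphic to $(108,37)$ with identical $n_2,n_{31},n_{32}$ and $\pi_1$, yet coinvariant ranks $19$ and $20$, so $b_2=6$ versus $5$. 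Each ambiguous case needs its own argument. The paper combines:
\begin{itemize}
\item the overgroup information in H{\"o}hn--Mason;
\item Kawatani's result that for $\langle\diag(1,1,1,\om,\om,\om)\rangle$ the terminalization is isomorphic in codimension one to a generalized Kummer fourfold, forcing $b_2=7$;
\item monotonicity of the coinvariant rank under inclusion of subgroups;
\item a direct computation of $G$-invariants in $H^{2,2}(X,\C)_{\mathrm{prim}}$ of the Fermat cubic, spanned by residues of $x_ix_jx_k\Omega/f^2$ with $i,j,k$ distinct (Lemma~\ref{lem:rkFermat}).
\end{itemize}

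\textbf{You also skip the reduction that makes the computation finite.} Cubics with a given $G$ vary in positive-dimensional and sometimes reducible moduli spaces $\mathcal{M}_G$: three components for $C_3$, two for $S_3^2$, $A_7$, $M_{10}$, and others. You need that $b_2$ and $\pi_1$ are constant along $G$-equivariant deformations, and that every component is represented in your list. The paper's Proposition~\ref{prop:maximalG} shows that every pair $(X,G)$ deforms to $(X',G)$ with $\Aut^s(X')$ one of the seven maximal groups. This requires checking that each component of a reducible $\mathcal{M}_{\Aut^s(X)}$ contains such an $X'$. The paper separates components via normal forms of elements, or via fixed points, as for the two non-conjugate copies of $S_3^2$ in $C_3^4\rtimes A_6$. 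Only then does the problem become a GAP enumeration of subgroups of seven explicit subgroups of $\PGL_6$, with the normal form $\diag(1,1,1,\om,\om,\om)$ detected by characteristic polynomials.

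\textbf{A correction in Step~4.} Your ``equivalently'' holds only for generic points of the strata. For $G=C_4=\langle h\rangle$, $h$ restricts to a symplectic involution of the K3 surface $F_{h^2}$, so it fixes $8$ points there. Such stabilizers would generate all of $G$, whereas $N=\langle h^2\rangle$ and $\pi_1(Y^{\reg})\simeq C_2$, as in Table~\ref{table:term}.
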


In \cite{LazaZheng2022}, Laza and Zheng classified all possible symplectic automorphism groups of smooth cubic fourfolds.
Explicit equations for smooth cubic fourfolds realizing each such group as their symplectic automorphism group are known \cite{Mongardi2013, HoehnMason2019, YangYuZhu, Koike2024, Mongardi2013a, Adler1978, Fu2015}, and provide a natural starting point for the  classification in Theorem \ref{intro:thm}.
By \cite[Proposition 8.1]{BertiniGrossiMauriMazzon2024}, any terminalization in Table \ref{table:term} whose regular locus has nontrivial fundamental group is a quasi-étale quotient of a terminalization with simply connected regular locus, already appearing in the table. Thus, for purposes of classification, it suffices to focus on IHS varieties with simply connected regular locus.

To compute $b_2(Y)$, we establish the following formula.
\begin{theorem}[Theorem \ref{thm:b2}]
	Let $X$ be a smooth cubic fourfold, and $F(X)$ its Fano variety of lines. Let $G$ be a finite group acting symplectically on $X$. Let $q \colon F(X) \to F(X)/G$ be the quotient map, $Y$ a terminalization of $F(X)/G$, and $\Sigma$ the reduced singular locus of $F(X)/G$.
    
    For any $g \in G$, denote by $F_g \subset F(X)$ the unique component of the fixed locus of $g \in G$ of codimension $2$ (if any), and by $N_G(g)$ (respectively $C_G(g)$) the normaliser (respectively centraliser) of $g$ in $G$.
    Denote by
	\begin{itemize}
		\item $n_2$ the number of components 
		$q(F_g)$ in $\Sing(F(X)/G)$
		with $\ord(g)=2$;
		\item $n_{31}$ the number of components $q(F_g)$ in $\Sing(F(X)/G)$ with $g$ of order $3$ and such that $N_G(g)\setminus C_G(g)$ contains elements of even order;
		\item $n_{32}$ the number of components $q(F_g)$ in $\Sing(F(X)/G)$
		with $g$ of order $3$ and such that $N_G(g)\setminus C_G(g)$ contains no element of even order.
	\end{itemize}
	The following identity holds
	$$
	b_2(Y)= \rk(H^2(F(X),\Z)^G) + n_2 + n_{31} + 2n_{32}.
	$$
\end{theorem}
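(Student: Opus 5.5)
The plan is to follow the strategy of \cite{BertiniGrossiMauriMazzon2024} (and \cite{Menet2022}). First, write
$$b_2(Y)=b_2(F(X)/G)+\#\{\text{exceptional prime divisors of } Y\to F(X)/G\}.$$
This holds because $F(X)/G$ has rational (symplectic, hence rational) singularities and is $\Q$-factorial up to codimension considerations. More precisely, the terminalization $\pi\colon Y\to F(X)/G$ is crepant. The decomposition theorem, or equivalently the argument via the Leray spectral sequence and the fact that the fibres over codimension-$2$ points have $H^1=0$, gives
$$H^2(Y,\Q)\cong H^2(F(X)/G,\Q)\oplus\bigoplus_E \Q[E].$$
In addition, $H^2(F(X)/G,\Q)=H^2(F(X),\Q)^G$ because $G$ is finite. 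This gives the term $\rk H^2(F(X),\Z)^G$.

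It therefore remains to count the exceptional divisors. They lie only over the codimension-$2$ part of $\Sigma$: the singularities in codimension $4$ are isolated, and a crepant terminalization has no exceptional divisors over terminal points. One must check that images of isolated fixed points, and points where the local group is not generated by symplectic reflections of codimension $2$, are terminal, as in \cite{BertiniGrossiMauriMazzon2024}. The codimension-$2$ components of $\Sigma$ are exactly the images $q(F_g)$. So I will analyse $F(X)/G$ transversally at a general point of $q(F_g)$. There the local model is $\C^2\times(\C^2/H)$, where $H=\Stab_G$ of a general point of $F_g$ acts on the normal slice.

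Next I identify $H$ and its action. For a general point $x\in F_g$, the pointwise stabiliser is the cyclic group $\langle g\rangle$ (after possibly replacing $g$ by a generator). It acts on the normal $\C^2$ as $\diag(\zeta,\zeta^{-1})$. Using the Laza--Zheng classification and the geometry of fixed loci on $F(X)$, I will check that codimension-$2$ fixed components only occur for $\ord(g)\in\{2,3\}$. The transversal singularity is then $A_1$ or $A_2$. The minimal resolution of $A_{n-1}$ has $n-1$ exceptional curves. Globally, however, these curves may be permuted by monodromy along $q(F_g)$. The monodromy is induced by the elements of $N_G(g)$ preserving $F_g$, modulo $C_G(g)$ in its effect on the normal representation.

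The counting is then as follows. For $A_1$ there is one curve, so each component contributes $1$; this gives $n_2$. For $A_2$ the two exceptional curves correspond to the two eigenlines of $g$, i.e.\ to the characters $\zeta,\zeta^{-1}$. An element $h\in N_G(g)$ with $hgh^{-1}=g^{-1}$ swaps them. The step I expect to be the main obstacle is relating this to the parity condition in the statement: $N_G(g)\setminus C_G(g)$ consists exactly of the inverting elements, since $\mathrm{Aut}(\Z/3)=\Z/2$. Such an $h$ fixing a point of $F_g$ genuinely exchanges the two branches, so the exceptional divisor is irreducible and the component contributes $1$, giving $n_{31}$. Otherwise there are two divisors, giving $2n_{32}$. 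The delicate point is to show that the monodromy really acts via such $h$, and that the existence of an inverting element of even order is the exact criterion. An inverting element of odd order would square to an element of the centraliser and may act on $q(F_g)$ without producing a swap along the normal slice; I would verify this via the normalisation of $q(F_g)$, i.e.\ $F_g/N_G(F_g)$, and the local action at fixed points. Summing the contributions gives the formula.
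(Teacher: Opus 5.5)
Your route is the paper's: $b_2(Y)=\rk H^2(F(X),\Z)^G+\#\{\text{exceptional divisors}\}$, exceptional divisors detected generically over the components $q(F_g)$, transversal $A_1$ or $A_2$, and for $A_2$ the two exceptional curves matched with the eigen-subbundles $L_\om,L_{\om^2}$ of $N_{F_g/F(X)}$. The gap is the step you yourself flag as the main obstacle: you leave it open, and the way you propose to close it points in the wrong direction.

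\textbf{Closing the parity step.} No fixed point of $h$ on $F_g$ is needed. Since $\Fix(F_g)=\langle g\rangle$ (by finite subgroups of $\SL(2,\C)$ and Proposition~\ref{prop:fixed loci codim2}), one gets $\Stab(F_g)=N_G(g)$ (Lemma~\ref{lem:Stab Fg}). The bundles $L_\om,L_{\om^2}$ are globally defined over the irreducible surface $F_g$. Hence the exceptional divisors over $q(F_g)$ correspond to the $N_G(g)$-orbits on the two-element set $\{L_\om,L_{\om^2}\}$; this is what the argument of Lemma~\ref{lem:n3} encodes via $\tilde E_\om,\tilde E_{\om^2}$. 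Every $h\in N_G(g)\setminus C_G(g)$ satisfies $hgh^{-1}=g^{-1}$, so it maps $L_\om$ onto $L_{\om^2}$ globally.

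Your worry about inverting elements of odd order is void, because they do not exist. If $hgh^{-1}=g^{-1}$ and $h^m=1$ with $m$ odd, then $g=h^mgh^{-m}=g^{-1}$, contradicting $\ord(g)=3$. So every element of $N_G(g)\setminus C_G(g)$ has even order, and the parity condition is simply $N_G(g)\neq C_G(g)$. No study of $F_g/N_G(g)$ or of local actions at fixed points is required. The paper phrases the same observation as follows: a swapping $\iota$ of odd order would give $\tilde E_\om=\iota^{\ord(\iota)}(\tilde E_\om)=\iota(\tilde E_\om)=\tilde E_{\om^2}$.

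\textbf{A secondary misstatement.} You propose to check that points whose local group is not generated by codimension-$2$ symplectic reflections are terminal. That check fails for dissident points lying on some $q(F_g)$: they sit on the codimension-$2$ singular locus, so they are never terminal. What you actually need is that no exceptional divisor of $Y$ has a dissident point as its centre. The paper obtains this from $Y^\circ\simeq\Bl_{\Sigma\cap(F(X)/G)^\circ}(F(X)/G)^\circ$ together with $\codim(Y\setminus Y^\circ)\geq 2$. Alternatively, the Ito--Reid correspondence matches crepant divisors with age-one conjugacy classes. A finite-order element of $\mathrm{Sp}(4,\C)$ has age one exactly when it fixes a codimension-$2$ subspace, so every crepant divisor has a codimension-$2$ centre.
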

In this formula, only the number $n_2$ depends solely on the abstract structure of $G$. The quantities $n_{31}$ and $n_{32}$, instead, depend on the specific embedding of $G$ in $\PGL_6$; in Propositions \ref{prop:fixed loci codim2} and \ref{prop:b2_grouptheoretic} we give alternative descriptions of these terms, following \cite{Fu2015} which shows that a symplectic automorphism of $X$ of order $3$ fixes a locus of codimension 2 in $F(X)$ if and only if its normal form is $\diag(1,1,1,\omega,\omega, \omega).$
Such elements not only contribute to the formula for $b_2(Y)$, but also imply that the action of $G$ on $F(X)$ cannot arise from an action on the Hilbert scheme $S^{[2]}$ induced by an action on the $\textrm{K3}$ surface $S$, even though $S^{[2]}$ is deformation equivalent to $F(X)$. Indeed, the fixed locus of any induced symplectic automorphism of $S^{[2]}$ of order $3$ has no components of codimension $2$. Therefore, any group action in Table \ref{table:term} with nontrivial contribution $n_{31}$ or $n_{32}$ necessarily corresponds to an action that does not appear in the classification of \cite{BertiniGrossiMauriMazzon2024}.

Determining the rank of the invariant lattice $H^2(F(X),\Z)^G$ is a subtle task: although \cite{HoehnMason2019} lists possible ranks for all groups acting symplectically on $\textrm{K}3^{[2]}$-type manifolds, the actions themselves are not given and the same abstract group may appear with multiple ranks. In Section \ref{subsec:rk} we adopt various strategies to isolate the correct rank in each case.

Finally, using a numerical obstruction to deformation equivalence for IHS varieties arising from symplectic quotients, we identify genuinely new deformation types among our terminalizations.

\begin{theorem}[Theorem \ref{thm:b2 new}] \label{intro:thm new}
There are at least two new deformation classes of irreducible symplectic varieties with simply connected regular locus and $b_2=4$.
\end{theorem}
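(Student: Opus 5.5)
The plan is to produce two explicit terminalizations from Table \ref{table:term}, show that each has simply connected regular locus and $b_2=4$, and then show that they are not deformation equivalent to each other or to any previously known example. The existence part comes directly from Theorem \ref{intro:thm} together with Theorem \ref{thm:b2}. In Table \ref{table:term} I will select the pairs $(X,G)$ for which $b_2(Y)=4$ and $\pi_1(Y^{\reg})=1$. Then $Y$ is a projective irreducible symplectic variety: terminalizations of symplectic quotients of IHS manifolds are irreducible symplectic in the singular sense once the regular locus is simply connected, as used in \cite{BertiniGrossiMauriMazzon2024}.

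The natural candidates have large $G$, so that $\rk H^2(F(X),\Z)^G$ is small (for instance $2$ or $3$), together with contributions $n_{31}$ or $n_{32}$ coming from elements with normal form $\diag(1,1,1,\omega,\omega,\omega)$. Such contributions certify that the action does not come from a natural action on some $S^{[2]}$.

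The core of the proof is distinguishing deformation classes. Terminalizations of quotients $F(X)/G$ deform in locally trivial families. Under such deformations the following are invariant:
\begin{itemize}
\item $b_2$,
\item the analytic type of the singularities, i.e.\ the number and types of the singular strata of $Y$ (the transversal singularities along surfaces and the isolated singular points),
\item the Fujiki constant and the BBF form on $H^2$.
\end{itemize}
For the first obstruction I will use the singularity data. From the fixed-point analysis of $G$ on $F(X)$, I will record which codimension-2 strata are contracted and which isolated quotient singularities $\C^4/H$ survive in $Y$ (those that admit no crepant resolution). I will compare this list with those of all known $b_2=4$ examples with simply connected regular locus, namely the Fujiki fourfolds of \cite{Menet2022} and the quotients of \cite{BertiniGrossiMauriMazzon2024}. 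Whenever the singularity data differ, the classes are distinct.

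If the singularity data coincide, I will fall back on a numerical invariant. The Fujiki constant $c_Y$, with $\int_Y \alpha^4 = c_Y\, q_Y(\alpha)^2$, can be computed via pullback: $q^*$ multiplies top intersections by $|G|$, so $c_Y$ is determined by $|G|$, by $c_{F(X)}=3$, and by the index of the lattice $q^*H^2(Y)$ relative to $H^2(F(X))^G$ together with the exceptional classes. An alternative is the discriminant and parity of the BBF lattice on $H^2(Y,\Z)$.

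The main obstacle is exactly this comparison step, in particular computing the BBF lattice or the Fujiki constant of $Y$ precisely. This requires knowing the invariant lattice $H^2(F(X),\Z)^G$ with its form, not merely its rank, and knowing how exceptional divisors over the codimension-2 strata pair with each other. My first attempt will be to avoid this entirely by using the singularity types, that is, the count of isolated non-resolvable points and their local groups. These are combinatorial and read off from stabilizers, and I expect them to separate the two new examples from the finite list of known $b_2=4$ cases. Only where that fails will I compute the Fujiki constant.
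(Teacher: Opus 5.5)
Your existence step is fine. The only rows of Table~\ref{table:term} with $b_2(Y)=4$ and $\pi_1(Y^{\reg})=\{1\}$ are $G=L_2(11)$ and $G=A_7$; every other row with $b_2=4$ has nontrivial $\pi_1(Y^{\reg})$. Your heuristic about these candidates is off, though. For both groups, $n_2=1$, $n_{31}=n_{32}=0$ and $\rk H^2(F(X),\Z)^G=3$, so no element with normal form $\diag(1,1,1,\om,\om,\om)$ is involved. Even if one were, it would only show that the action is not induced from a K3 surface, not that $Y$ lies in a new deformation class.

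The genuine gap is the step that carries the theorem: you never prove that the two terminalizations are not deformation equivalent to each other or to a known example. The singularity comparison is only announced (``I expect them to separate''). Carrying it out requires the dissident locus of $F(X)/G$, the stabilizers of its points, and the singularities of the terminalization over them, which are not simply the quotients $\C^4/H$ that ``survive''. It also requires the same data for the known $b_2=4$ example, which you do not identify. This route might be completable, but as written it is a hope. The paper itself regards such singularity analysis as delicate enough to leave open whether the two $A_7$ actions give the same class. Your fallback is framed as needing the exact BBF lattice, which you yourself call the main obstacle.

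The missing idea is that only rationality is needed, not an exact computation. This is Menet's criterion \cite[Propositions 3.19 and 3.21]{Menet2022}, which is what the paper uses. Let $\pi\colon F(X)\to F(X)/G$ be the quotient map and $\beta\in H^2(F(X)/G,\Q)$. Then $\int_Y(p^*\beta)^4=\frac{1}{|G|}\int_{F(X)}(\pi^*\beta)^4=\frac{3}{|G|}\,q_{F(X)}(\pi^*\beta)^2$, and also $\int_Y(p^*\beta)^4=c_Y\,q_Y(p^*\beta)^2$.

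Hence $q_Y(p^*\beta)=\lambda\,q_{F(X)}(\pi^*\beta)$ for a constant $\lambda$. This $\lambda$ is rational because both forms take rational values on rational classes, and $c_Y=3/(|G|\lambda^2)$. Since $b_2$ and the Fujiki constant (for the primitive integral BBF form) are deformation invariants, deformation equivalent $Y_1,Y_2$ force $|G_1|/|G_2|\in(\Q^\times)^2$. Menet's propositions give the same conclusion for the Fujiki fourfolds.

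No terminalization in \cite{BertiniGrossiMauriMazzon2024} has $b_2=4$. The only Fujiki fourfold with $b_2=4$ comes from $A_4^2$, of order $144$. The ratios $660/2520=11/42$, $660/144=55/12$ and $2520/144=35/2$ are not squares in $\Q$, so the three classes are pairwise distinct. This arithmetic check replaces your whole comparison step.
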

Such examples are rare: previously, only one deformation class with $b_2=4$ was known, constructed by Menet \cite{Menet2022} as a Fujiki fourfold. Our new classes arise from the maximal groups $G=A_7$ and $G=L_2(11)$. The alternating group $A_7$ acts in two distinct ways on two non-isomorphic smooth cubic fourfolds; whether the corresponding terminalizations are deformation equivalent remains open and likely requires a detailed analysis of their singularities. The same question arises in the cases where $b_2 >4$, since the numerical criterion alone does not allow to determine whether the terminalizations in Table \ref{table:term} give rise to new deformation classes.

\subsection*{Acknowledgements} The author is grateful to Kenji Koike, Radu Laza, Mirko Mauri, and Zhiwei Zheng for useful conversations and insightful remarks that contributed to the development of this work.

\section{Notation and Definitions} \label{sec:notation}
\subsection{Groups}\label{subsec:action} We denote by 
\begin{align*}
	C_n & \hspace{5mm} \text{cyclic group of order } n \\
	D_{2n}
	%=C_n \rtimes C_2 
	& \hspace{5mm} \text{dihedral group of order } 2n \\
	Q_n & \hspace{5mm} \text{generalised quaternion group of order }n \\
	S_n & \hspace{5mm} \text{group of permutations on $n$ elements}\\
	A_n & \hspace{5mm} \text{group of even permutations on $n$ elements} \\
	A_{m,n} &  \hspace{5mm} (S_m \times S_n) \cap A_{m+n} \\
	M_{10} & \hspace{5mm} \text{Mathieu group of degree 10} \\
	L_2(11) &  \hspace{5mm}  PSL_2(\mathbb{F}_{11}) \\
	%p^{1+2n}_{\pm }& \hspace{5mm} \text{extraspecial groups of order $p^{1+2n}$}\\
	\xi_m & \hspace{5mm} \text{primitive $m$-th root of unity}\\
	\om & \hspace{5mm} \text{primitive $3$-rd root of unity}
\end{align*}
Given a group $G$ and an element $g \in G$, we denote its centraliser and normaliser respectively by
\begin{align*}
	C_G(g) & \coloneqq \{ h \in G \,|\,  h^{-1}gh=g \},\\
	N_G(g)=N_G(\langle g \rangle) & \coloneqq  \{ h \in G \,|\, h^{-1}gh \in \langle g \rangle \}.
\end{align*}
All group actions in the paper are assumed to be faithful.

\subsection{IHS varieties} We recall some definitions.
\begin{defi}
Let $X$ be a normal variety, $j \colon X^{\mathrm{reg}} \hookrightarrow X$ the inclusion of the regular locus, and  $\Omega^{[p]}_{X}\coloneqq j_{*}\Omega^{p}_{X^{\mathrm{reg}}}$ the sheaf of reflexive $p$-forms on $X$.
A reflexive 2-form \[\omega_X \in H^0(X, \Omega^{[2]}_{X})=H^0(X^{\mathrm{reg}}, \Omega^{2}_{X^{\mathrm{reg}}})\] is \emph{symplectic} if its restriction to the regular locus of $X$ is closed non-degenerate.

\end{defi}
\begin{defi}
	A normal variety $X$ is \emph{symplectic} if \begin{itemize}
		\item it admits a symplectic form $\omega_X \in H^0(X, \Omega^{[2]}_{X})$;
		\item it has rational singularities.
	\end{itemize} 
	By \cite[Corollary 1.8]{KebekusSchnell2021}, this means that a holomorphic
	symplectic form $\omega_{X^{\mathrm{reg}}}$ on $X^{\mathrm{reg}}$ extends to a (possibly degenerate) holomorphic 2-form $\omega_{\widetilde{X}}$ on a resolution $\widetilde{X} \to X$. 
\end{defi}

\begin{defi}
	A symplectic compact K\"{a}hler
	%\footnote{We refer to \cite[\S3.3, p. 346]{G1962} or \cite[\S 2.3]{BL2022} for the notion of (possibly singular) compact K\"{a}hler space.} 
	variety $(X, \omega_X)$ is an \emph{irreducible holomorphic symplectic (IHS) variety} if for any finite quasi-\'{e}tale cover $g \colon X' \to X$ the exterior algebra of reflexive forms $H^0(X',\Omega^{[\bullet]}_{X'})$ on $X'$ is generated by the reflexive pullback $g^{[*]}\omega_X$.
\end{defi}

\section{Symplectic automorphisms of Fano varieties of lines} \label{sect:symplectic auto gps}

Let $V$ be a $6$-dimensional complex vector space and $\PP^5\coloneqq \PP(V)$. Given a smooth cubic fourfold $X \subset \PP^5$, we denote by $F(X)$ the Fano variety of lines on $X$. This is a $4$-dimensional smooth projective variety defined by
\[
F(X) \coloneqq \{ \ell \in \Gr(\PP^1, \PP^5) \,|\, \ell \subset X\}.
\]
By \cite{BeauvilleDonagi} $F(X)$ is an IHS variety, that we equip with the polarization $L$ given by the restriction of the Pl{\"u}cker line bundle on $\Gr(\PP^1, \PP^5)$.

\begin{defi}
	An automorphism $\phi$ of $F(X)$ is called
	\begin{itemize}
		\item polarized if it preserves the Pl{\"u}cker polarization $L$, i.e., $\phi^*L \simeq L$;
		\item symplectic if it acts as the identity on $H^{2,0}(F(X))$.
	\end{itemize}
\end{defi}

By \cite[Proposition 4]{Charles2012}, an automorphism of $F(X)$ is polarized if and only if it is induced by an automorphism of $X$ itself, hence in particular it comes from a linear automorphism of $\PP^5$, i.e.,
\[
\Aut(F(X), L) \simeq \Aut(X) \subset \PGL_6.
\]
We say that an automorphism $\phi$ of $X$ is symplectic if the induced automorphism of $F(X)$ is so, and we still denote it by $\phi$. We denote by $\Aut^s(X)$ the group of symplectic automorphisms of $X$; we say that $\Aut^s(X)$ is \emph{maximal} if it does not embed as a proper subgroup of the symplectic automorphism group of any other smooth cubic fourfold. 

For any group $G$, we denote by $\mathcal{M}_{G}$ the moduli space of smooth cubic fourfolds $X$ admitting $G \subseteq \Aut^s(X)$.
Two smooth cubic fourfolds $X$ and $X'$ in a same irreducible component of $\mathcal{M}_G$ are deformation equivalent as pairs $(X,G)$ and $(X',G)$, in the following sense: there exists a flat proper family $\mathcal{X}$ over a connected base with an algebraic continuous fiberwise action of $G$ on $\mathcal{X}$ such that two fibers $\mathcal{X}_t$ and $\mathcal{X}_{t'}$ are respectively isomorphic to $X$ and $X'$, and the action of $G$ restricts to the given actions on $X$ and $X'$.

Among the 34 possible symplectic automorphism groups of smooth cubic fourfolds classified in \cite{LazaZheng2022}, the associated moduli space is always equidimensional with at most two irreducible components, except in the case of the group $C_3$. Indeed, $\mathcal{M}_{C_3}$ has two irreducible components of dimension 8, corresponding respectively to the action of $\diag(1,1,1,1,\omega,\omega^2)$ and $\diag(1,1,\omega,\omega,\omega^2,\omega^2)$, and an additional irreducible component of dimension 2 where $C_3$ acts via $\diag(1,1,1,\omega,\omega,\omega)$. This two-dimensional component is in fact isomorphic to $\mathcal{M}_{G'}$ where $G' \simeq (C_3^3 \rtimes_3 C_3^2) \rtimes_3 C_2 \simeq 3_+^{1+4}\rtimes_3 C_2 $. This means that the generic smooth cubic fourfold admitting a symplectic automorphism with normal form $\diag(1,1,1,\omega,\omega,\omega)$ has full symplectic automorphism group $G'$; see \cite[Section 6.1]{Koike2024}.
For all other symplectic automorphism groups $G$, the equidimensionality of the moduli space $\mathcal{M}_G$ is not stated explicitly in \cite{Koike2024}, but follows from \cite{LazaZheng2022,HoehnMason2019}. Namely, the moduli space of smooth cubic fourfolds $X$ admitting $G\subseteq \Aut^s(X)$ with prescribed action, hence with fixed coinvariant lattice $H^4(X,\Z)_G \simeq S$, has dimension $20-\rk(S)$; see for instance \cite[Theorem 4.6]{LazaZheng2022}. The possible ranks of coinvariant lattices for every symplectic action of each group $G$ are listed in \cite[Table 12]{HoehnMason2019}.

\begin{prop} \label{prop:maximalG} \hfill
    \begin{enumerate}
        \item \label{prop:maximalGdef} Any smooth cubic fourfold $X$ together with a finite subgroup $G \subseteq \Aut^s(X)$ is deformation equivalent to a pair $(X',G)$, where $X'$ is a smooth cubic fourfold whose symplectic automorphism group $\Aut^s(X')$ is maximal.
        
        \item \label{prop:maximalGlist} A maximal symplectic automorphism group is isomorphic to one of the following groups:
        \begin{equation}\label{eq:maximalG}
		C_3^4 \rtimes A_6,\quad 
		A_7,\quad
		(C_3^3 \rtimes_3 C_3^2)\rtimes Q_8\footnote{The group $C_3^3 \rtimes_3 C_3^2$ is often denoted by $3_+^{1+4}$, one of the extraspecial groups of order $3^{1+4}$.},\quad
		M_{10},\quad
		L_2(11),\quad
		A_{3,5},\quad
		Q_8 \rtimes S_3.
	\end{equation}
    \end{enumerate}
\end{prop}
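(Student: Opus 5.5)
The plan is to reduce both parts to the Laza--Zheng classification together with the dimension formula for $\mathcal{M}_G$ recalled above. Part (2) will come out of the same argument, so I treat part (1) first.

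For part (1), fix $(X,G)$ with $G \subseteq \Aut^s(X)$. Then $X$ lies in some irreducible component $\mathcal{M}^0$ of $\mathcal{M}_G$. The action of $G$ on $H^4(X,\Z)$ determines a coinvariant lattice $S = H^4(X,\Z)_G$, and by \cite[Theorem 4.6]{LazaZheng2022} $\mathcal{M}^0$ has dimension $20-\rk(S)$. A very general point $X'$ of $\mathcal{M}^0$ has $\Aut^s(X') = H$ for some finite group $H \supseteq G$. Here I will use that the coinvariant lattice $H^4(X',\Z)_{\Aut^s(X')}$ is constant on a dense open subset of $\mathcal{M}^0$, since the Noether--Lefschetz-type loci where the symplectic automorphism group jumps are countable unions of proper closed subsets. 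Moving from $X$ to $X'$ inside the irreducible component $\mathcal{M}^0$ is a deformation of pairs $(X,G)\sim(X',G)$, in the sense defined above.

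If $H$ is not maximal, then $H \subsetneq \Aut^s(X'')$ for some cubic $X''$, and I repeat the argument with the pair $(X'',H)$. The key point is that I must pass to a new pair, not stay in $\mathcal{M}^0$. Concretely, I consider the component of $\mathcal{M}_H$ through $X''$ and take a general member there. The issue is that $X'$ and $X''$ need not lie on a common family with an $H$-action. To handle this, I will instead use the criterion that the $H$-action, or at least its restriction to $G$, is determined up to deformation by the pair $(G \subset H, S_G)$. Via the Torelli theorem for cubic fourfolds \cite{LazaZheng2022}, components of $\mathcal{M}_G$ correspond to conjugacy classes of embeddings of the coinvariant lattice into the Borcherds-type lattice, up to the relevant isometries. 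Given such an embedding for $G$, enlarging to the coinvariant lattice of a larger group containing it produces a point of the closure of $\mathcal{M}^0$ at which the symplectic group jumps.

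Hence every component $\mathcal{M}^0$ of $\mathcal{M}_G$ contains, in its closure within the smooth locus, a component of $\mathcal{M}_{H'}$ for some maximal $H' \supseteq G$. The chain terminates because the ranks $\rk(S) \le 20$ strictly increase, and deforming within the closure gives the claim.

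For part (2), I will read off the maximal elements under inclusion among the 34 groups in \cite{LazaZheng2022}, together with their actions, using \cite[Table 12]{HoehnMason2019}. The groups $C_3^4\rtimes A_6$, $A_7$, $3_+^{1+4}\rtimes Q_8$, $M_{10}$, $L_2(11)$, $A_{3,5}$ and $Q_8\rtimes S_3$ are exactly those whose coinvariant lattice has rank $20$, so that $\mathcal{M}_G$ has dimension $0$. Any group with $\rk(S)<20$ is contained in one of these, as recorded by the subgroup diagram in \cite{LazaZheng2022}.

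The main obstacle is the lattice-theoretic step of part (1): showing that every $G$-component degenerates to a component of a maximal group within the smooth locus. This requires checking that the relevant overlattices avoid the roots and short vectors corresponding to singular cubics. I would try to quote this case by case from the Laza--Zheng lattice embeddings, and from \cite{Koike2024} for the $C_3$ component with normal form $\diag(1,1,1,\omega,\omega,\omega)$.
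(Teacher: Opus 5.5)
Part (1) has a genuine gap. Everything hinges on showing that every irreducible component of $\mathcal{M}_G$ contains a smooth cubic with maximal symplectic group, and your ``enlarging the coinvariant lattice'' step only restates this. You give no reason why an overgroup with a compatible overlattice exists for the given component, nor why the resulting periods avoid the loci of singular cubics, and you leave this open as the main obstacle.

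The missing observation is that the difficulty only arises when the relevant moduli space is reducible. If $\mathcal{M}_{\Aut^s(X)}$ is irreducible, then any $X'$ with maximal $\Aut^s(X')\supseteq \Aut^s(X)$ lies in it. So $(X,\Aut^s(X))$, and hence $(X,G)$, deforms to $(X',G)$. This also removes your worry that $X'$ and $X''$ need not lie in a common $H$-family.

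The reducible cases are the fourteen groups of \cite[Theorem 1.1 and Table 1]{Koike2024}, not just $C_3$. Apart from $C_3$, each has at most two components. The paper settles them by exhibiting explicit maximal cubics in each component, separated by conjugation-invariant data:
\begin{itemize}
\item For the two $8$-dimensional components of $\mathcal{M}_{C_3}$ it uses the Fermat cubic, which admits both $\diag(1,1,1,1,\om,\om^2)$ and $\diag(1,1,\om,\om,\om^2,\om^2)$. The $2$-dimensional component is identified with the irreducible $\mathcal{M}_{3_+^{1+4}\rtimes_3 C_2}$.
\item For $D_{12}$ and $S_5$ it uses $X^1(A_7)$ and $X^2(A_7)$, distinguished by the normal form of a generator of $C_6$.
\item For $S_3,A_{3,3},A_4,A_{4,3},C_3^2\rtimes C_4,S_4,A_5,A_6$ it again uses $X^1(A_7)$ and $X^2(A_7)$, distinguished by whether the subgroup contains an element with normal form $\diag(1,1,1,1,\om,\om^2)$.
\item For $S_3^2$ it uses two subgroups $S_3^2\subset C_3^4\rtimes A_6$ acting on the Fermat cubic, one with fixed points and one without.
\end{itemize}
Once irreducibility is used, the argument reduces to this finite explicit check; it needs no lattice extension.

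Part (2) rests on a false criterion: maximality is not equivalent to coinvariant rank $20$. The group $Q_8\rtimes S_3$ is maximal, yet its coinvariant lattice has rank $19$ and $\mathcal{M}_{Q_8\rtimes S_3}$ is one-dimensional. Conversely, proper subgroups of $C_3^4\rtimes A_6$ such as $C_9$ or $C_3^4$ already have coinvariant rank $20$. The list has to be read off directly from the classification \cite[Theorem 1.2]{LazaZheng2022}, which is all the paper does for part (2).
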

\begin{proof}
This follows from the classification of all finite groups that arise as $\Aut^s(X)$ for a smooth cubic fourfold $X$ \cite{LazaZheng2022}, and from the explicit determination, for each prescribed action of a group $G$, of equations defining smooth cubic fourfolds with $G \simeq \Aut^s(X)$ \cite{Koike2024}. In particular, part (\ref{prop:maximalGlist}) follows directly from \cite[Theorem 1.2]{LazaZheng2022}. 

For part (\ref{prop:maximalGdef}), let $X$ be a smooth cubic fourfold and let $G \subseteq \Aut^s(X)$ be a finite subgroup. Assume that $\mathcal{M}_{\Aut^s(X)}$ is irreducible. Then, for any maximal symplectic automorphism group $\Aut^s(X')$ containing $\Aut^s(X)$, the pair $(X,\Aut^s(X))$ is deformation equivalent to $(X',\Aut^s(X))$. It follows that the same deformation equivalence holds for $G$.

If instead $\mathcal{M}_{\Aut^s(X)}$ is not irreducible, we claim that any irreducible component of $\mathcal{M}_{\Aut^s(X)}$ contains a smooth cubic fourfold $X'$ such that $\Aut^s(X')$ is maximal, which implies (\ref{prop:maximalGdef}).
The cases in which $\mathcal{M}_{\Aut^s(X)}$ is not irreducible were determined in  \cite[Theorem 1.1 and Table 1]{Koike2024}: they occur precisely for the groups $$C_3,\,S_3,\,A_{3,3},\,D_{12},\,S_4,\,A_4,\,S_3^2,\,A_{4,3},\,A_5,\,C_3^2 \rtimes C_4,\,A_6,\,S_5,\,A_7,\,M_{10}.$$ In what follows, we prove the claim for the non-maximal symplectic automorphism groups appearing in this list.

The two 8-dimensional irreducible components of $\mathcal{M}_{C_3}$ correspond to the actions of $g_1=\diag(1,1,1,1,\omega,\omega^2)$, and $g_2=\diag(1,1,\omega, \omega,\omega^2, \omega^2)$, in normal form. Since the Fermat cubic $X_F$ has maximal symplectic automorphism group and admits the action of both matrices (see Appendix \ref{appendix}), each of these two components contains, respectively, $(X_F, \langle g_1 \rangle)$ and $(X_F, \langle g_2 \rangle)$. The remaining 2-dimensional irreducible component of $\mathcal{M}_{C_3}$ is isomorphic to an irreducible moduli space $\mathcal{M}_{G'}$, where $G' \simeq (C_3^3 \rtimes_3 C_3^2)$ is a symplectic automorphism group, thus the claim follows from the discussion above.
    
Both $D_{12}$ and $S_5$ are contained in the maximal symplectic automorphism group $A_7$, and $S_5$ has a unique subgroup isomorphic to $D_{12}$, up to conjugation. For the cubic fourfolds in one component of $\mathcal{M}_{D_{12}}$, the generator $l_1$ of the unique (up to conjugation) subgroup $C_6$ acts with normal form $\diag(-1,-1,1,1,\omega,\omega^2)$, while on the cubic fourfolds in the other component with normal form $\diag(1,-1,\omega,\omega,\omega^2,-\omega^2)$, by \cite[Section 4.1]{Koike2024}. 
The unique (up to conjugation) subgroup $C_6$ of $A_7$ acts on $X^1(A_7)$ by the permutation $(12)(34)(567)$, whose normal form coincides with that of $l_1$, and on $X^2(A_7)$ by the matrix $\diag(1,-1,\omega,\omega,\omega^2,-\omega^2)$; see Appendix \ref{appendix}. Therefore, the cubic fourfolds $X^1(A_7)$ and $X^2(A_7)$ lie in distinct irreducible components of $\mathcal{M}_{D_{12}}$ and of $\mathcal{M}_{S_5}$.

The group $H \in \{S_3,A_{3,3},\,A_4,\,A_{4,3},\,C_3^2 \rtimes C_4,\, S_4,\,A_5,\,A_6\}$ is contained in the maximal symplectic automorphism group $A_7$.
The action of $A_7$ on $X^1(A_7)$ admits elements with normal form $\diag(1,1,1,1,\omega,\omega^2)$, whereas the action on $X^2(A_7)$ contains no elements of this form. Hence, if the restriction of the action of $A_7$ on $X^1(A_7)$ to $H$ contains such an element (for instance a $3$-cycle), then $X^1(A_7)$ and $X^2(A_7)$ must lie in different irreducible components of $\mathcal{M}_H$. The groups $A_{3,3},\,A_4,\,A_{4,3},\, S_4,\,A_5$ and $A_6$ act by permutations on $X^1(A_7)$ and each contains a $3$-cycle. The same holds for $C_3^2 \rtimes C_4$ which contains a unique subgroup isomorphic to $A_{3,3}$.

The maximal symplectic automorphism group $C_3^4 \rtimes A_6$ contains the subgroups
\begin{align*}
    H_1 & \coloneqq \langle
    (12)(34),(13)(24),\diag(\omega,\omega^{2},1,1,1,1),\diag(1,1,\omega, \omega^{2},1,1) \rangle,\\
    H_2 & \coloneqq \langle
    (12)(56),(34)(56),\diag(\omega,\omega^{2},1,1,1,1),\diag(1,1,\omega, \omega^{2},1,1) \rangle,
\end{align*}
both isomorphic to $S_3^2$. The line $\ell = \{x_0=x_1=x_2=x_3=0\} \subset \PP^5$ is pointwise fixed by the action of $H_1$. Thus, on the Fermat cubic $X_F$, the intersection $X_F \cap \ell
%= \{[0:0:0:0:\lambda:-\lambda] \,|\, \lambda=\omega^i\}
$ consists of fixed points by $H_1$. 
Instead, the fixed locus by $H_2$ on $\PP^5$ is the single point $[0:0:0:0:1:1]$, which does not lie on $X_F$. Hence, the action of $H_2$ on $X_F$ has no fixed points. It follows that $(X_F,H_1)$ and $(X_F,H_2)$ belong to distinct irreducible components of $\mathcal{M}_{S_3^2}$.
\end{proof}
%In $A_7$ there exists element of order $3$ acting on $X^1(A_7)$ with normal form $\diag(1,1,1,1,\omega,\omega^2)$, for instance the permutation $(123)$, while none of the elements of $A_7$ of order $3$ acts on $X^2(A_7)$ with such normal form. 
%Given a subgroup $H \subseteq A_7$, the existence in $H$ of an element of order $3$ acting with normal form $\diag(1,1,1,1,\omega,\omega^2)$ on some cubic fourfold implies that $X^1(A_7)$ belongs to one irreducible component of $\mathcal{M}_H$, while $X^2(A_7)$ necessarily belongs to the other irreducible component. This occurs for all groups $A_{3,3},\,C_3^2 \rtimes C_4,\, S_4,\,A_5$ and $A_6$ above.
%This occurs for the group $A_{3,3}$ by \cite[Section 4.2]{Koike2024}; for $C_3^2 \rtimes C_4$ since it contains a unique subgroup $A_{3,3}$; for $S_4,A_5$ and $A_6$ since they contain the permutation $(123)$, By \cite[Section 4.2]{Koike2024}, $(123)$ acts on the cubic fourfolds in one component of $\mathcal{M}_{A_{3,3}}$, 

Proposition \ref{prop:maximalG} reduces the study of terminalizations of quotients of Fano varieties of lines on smooth cubic fourfolds, by groups of polarised symplectic automorphisms, to the case in which the smooth cubic fourfold has maximal symplectic automorphism group. For each maximal group $G$ listed in \eqref{eq:maximalG}, the equations of the cubic fourfolds in $\mathcal{M}_G$ were computed in \cite{LazaZheng2022,HoehnMason2019,Koike2024}. We refer to Appendix \ref{appendix} for more details and the explicit action of each group. 

\subsection{Codimension 2 fixed loci}
Terminalizations of quotients \(F(X)/G\), where $G$ is a finite group of polarized symplectic automorphisms, are nontrivial (i.e. not isomorphic to the quotient itself), precisely when $G$ contains elements whose fixed locus in $F(X)$ has components of codimension two; see for instance \cite[Claim 2.3.1]{Kollar2013} and \cite[Corollary 1]{Namikawa2001}. 
Such elements were studied in \cite{Fu2015}, where Fu established the following result.

\begin{prop}
%[Automorphisms with codimension 2 fixed loci]
	\label{prop:fixed loci codim2}
	A polarized symplectic automorphism $\phi$ of $F(X)$ fixes a locus of codimension $2$ if and only if
	\begin{enumerate}
		\item either $\ord(\phi)=2$, in which case it fixes 28 points and a $K3$ surface;
		\item or $\ord(\phi)=3$ and $\phi$ has normal form $\diag(1,1,1,\om,\om,\om)$, in which case it fixes an abelian surface.
	\end{enumerate}
\end{prop}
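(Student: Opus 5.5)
The plan is to work on $\PP^5$: reduce everything to linear algebra of a lift $\tilde\phi\in\mathrm{GL}_6$ of $\phi$, and read off fixed lines from the eigenspaces of $\tilde\phi$. First I would record the local constraint. A symplectic automorphism of finite order acts at a fixed point $\ell\in F(X)$ through $\mathrm{Sp}(T_\ell F(X))\cong \mathrm{Sp}_4$. Hence the tangent eigenvalues come in pairs $\lambda,\lambda^{-1}$. A component of codimension $2$ of the fixed locus means the tangent action is $\diag(1,1,\lambda,\lambda^{-1})$ with $\lambda\neq 1$.

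Next I classify the lines fixed by $\tilde\phi$. A line $\ell$ is fixed iff $\ell=\PP(W)$ with $W$ a $\tilde\phi$-stable $2$-plane. So either $W$ lies in a single eigenspace $V_\alpha$, or $W=\langle v,w\rangle$ with $v\in V_\alpha$, $w\in V_\beta$, $\alpha\ne\beta$. Write $X=\{f=0\}$ with $\tilde\phi^*f=\chi f$. The fixed lines form families: $F(X)\cap\Gr(2,V_\alpha)$, and the lines joining $X\cap\PP(V_\alpha)$ to $X\cap\PP(V_\beta)$ that lie in $X$. I would then bound the dimensions of these families using smoothness of $X$ and the weight decomposition of $f$. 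I expect a two-dimensional family only when some eigenspace has dimension $\ge 3$ with $\PP(V_\alpha)\cap X$ a cubic curve or surface, or when a join construction produces one.

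To identify the allowed orders, I would use the known description of symplecticity. The form on $F(X)$ comes, via the Abel--Jacobi/Beauville--Donagi correspondence, from $H^{3,1}(X)$, which is generated by the residue of $\Omega/f^2$. Hence $\phi$ is symplectic iff $\det(\tilde\phi)=\chi^2$ for the normalized lift. Combining this with the $\mathrm{Sp}_4$ constraint and the eigenvalue-pairing computation of $T_\ell F(X)=\ker\bigl(H^0(N_{\ell/\PP^5})\to H^0(\mathcal O_\ell(3))\bigr)$, the tangent weights at a fixed line are explicit characters built from the weights of $W$ and $V/W$. Requiring exactly two of them to be trivial forces $\lambda^2=1$ or $\lambda^3=1$ with a very specific eigenvalue configuration. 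For order $3$ this should single out the normal form $\diag(1,1,1,\om,\om,\om)$. There the fixed locus contains the lines joining the cubic curves $E_1=X\cap\PP(V_1)$ and $E_2=X\cap\PP(V_\om)$. These lines lie in $X$ because $f$ has no mixed terms of the wrong weight, and they form $E_1\times E_2$, an abelian surface. For all other order-$3$ forms, such as $\diag(1,1,1,1,\om,\om^2)$ or $\diag(1,1,\om,\om,\om^2,\om^2)$, the fixed locus is at most one-dimensional or a finite set of points. Higher orders are excluded by the same weight count.

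For order $2$, the symplectic condition forces the normal form $\diag(1,1,1,1,-1,-1)$, up to sign. Then $f=C(x_0,\dots,x_3)+x_4^2L_1+x_4x_5L_2+x_5^2L_3$, with the $L_i$ linear in $x_0,\dots,x_3$. The fixed lines are of two kinds. First, there are lines in the cubic surface $X\cap\{x_4=x_5=0\}$, which gives $27$ points. Second, there is the line $\{x_0=\dots=x_3=0\}$ when it lies in $X$, which gives the extra point, for $28$ in total. Third, there are lines joining a point $p\in\PP^3$ to a point of $\PP^1_{x_4,x_5}$ and lying in $X$. These are cut out by $C(p)=0$ together with the binary quadric $(L_i(p))$ having a root, which yields a double cover of the cubic surface branched along a sextic curve, that is, a K3 surface.

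The main obstacle is the exhaustive weight analysis showing no other order or normal form gives a two-dimensional fixed component. The smoothness of $X$ must be used to rule out degenerate configurations, such as a whole plane lying in $X$. Here I would rely on the Laza--Zheng classification of symplectic elements and their normal forms to reduce to a finite list of diagonal matrices, and check each case directly. Alternatively, one can simply invoke Fu's analysis \cite{Fu2015}, which carries this out.
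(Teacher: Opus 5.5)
The paper does not reprove this statement. Its proof is a citation of \cite[Theorem 1.1]{Fu2015}, with \cite[\S 5]{Fu2015} and \cite[\S 7]{Camere2012} for the fixed loci, so your closing fallback is exactly the paper's proof.

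Your explicit computations for $\diag(1,1,1,1,-1,-1)$ and $\diag(1,1,1,\om,\om,\om)$ correctly recover the cited descriptions, up to small repairs:
\begin{itemize}
\item For the involution you must rule out $f$ being anti-invariant; it would then vanish on $\PP^3=\{x_4=x_5=0\}$, which a smooth cubic fourfold cannot contain.
\item The line $\{x_0=\dots=x_3=0\}$ \emph{always} lies in $X$, since an invariant $f$ has no pure cubic in $x_4,x_5$. So there are always $28$ isolated points, not ``when it lies in $X$''.
\item $X\cap\{x_4=x_5=0\}$ has exactly $27$ lines because it is a component of the fixed locus of $\phi$ on the smooth $X$, hence a smooth cubic surface.
\end{itemize}

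The genuine gap is in the \emph{only if} direction. Your claim that requiring two trivial tangent weights at a fixed line forces $\lambda^2=1$ or $\lambda^3=1$ is false as a local statement. Take $\tilde\phi=\diag(1,1,1,i,i,-1)$, $\chi=1$ and $W=\langle e_0,e_3\rangle$. The weights of $\mathrm{Hom}(W,V/W)$ are $\{1,1,1,i,i,-i,-i,-1\}$ and those of $\mathrm{Sym}^3W^\vee\otimes\chi$ are $\{1,i,-1,-i\}$. So $\phi$ would act on $T_\ell F(X)$ as $\diag(1,1,i,-i)\in\mathrm{Sp}_4$, and $\det\tilde\phi=1=\chi^2$ holds. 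This order-$4$ configuration is excluded only because every invariant cubic $C(x_0,x_1,x_2)+L(x_0,x_1,x_2)x_5^2+Q(x_3,x_4)x_5$ is singular where $x_0=x_1=x_2=x_5=Q=0$. So the order is restricted by smoothness of $X$, checked case by case, which is Fu's classification of prime-order symplectic automorphisms including $p=5,7,11$. ``The same weight count'' therefore does not dispose of composite orders or of primes $p\ge 5$.

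Here is a clean way to finish once the prime-order case is settled.
\begin{itemize}
\item If $\phi$ has order $n$ and a codimension-$2$ fixed component $F$, its normal weights are $\lambda^{\pm1}$ with $\lambda$ primitive of order $n$. Hence every nontrivial power of $\phi$ also has $F$ as a codimension-$2$ fixed component.
\item Applying this to $\phi^{n/p}$ for each prime $p\mid n$ gives $n=2^a3^b$. It then remains to exclude $n=4$, $6$ and $9$.
\item For $n=6$, $F$ would be both the K3 surface of $\phi^3$ and the abelian surface of $\phi^2$, which is impossible.
\item For $n=4$ (resp.\ $9$), $\phi$ preserves the eigenspaces of $\phi^2$ (resp.\ $\phi^3$) and stabilizes every line of $F$. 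Hence it fixes pointwise the cubic surface and the line $\{x_0=\dots=x_3=0\}$ (resp.\ $E_1$ and $E_2$).
\item So $\phi$ is scalar on each eigenspace, i.e.\ projectively $\diag(1,1,1,1,i,i)$ (resp.\ $\diag(1,1,1,c,c,c)$ with $c^3=\om$). Under this, the two summands of $f$ have different eigenvalues, so one must vanish and $X$ is singular.
\end{itemize}
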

\begin{proof}
	See \cite[Theorem 1.1]{Fu2015}, and in particular \cite[\S 5]{Fu2015} and \cite[\S 7]{Camere2012} for the description of the fixed loci.
\end{proof}

\begin{rem}
    Let $G$ be a finite group acting symplectically on a smooth cubic fourfold $X$. Verifying if an element of $G$ satisfies condition (1) in Proposition \ref{prop:fixed loci codim2} is a purely group-theoretic question, in the sense that it depends only on the abstract group structure of $G$. Instead, condition (2) depends also on the embedding of $G$ in $\PGL_6$, hence on the action of the group on $X$.
\end{rem}

\begin{lemma}
Let $X$ be a smooth cubic fourfold. Denote by $N_3$ be the number of subgroups of $\Aut^s(X)$ generated by an element of order $3$ which fixes a codimension 2 locus in $F(X)$.
	\begin{enumerate}
		\item \label{item:N3=0} If $\Aut^s(X) \simeq A_7,\, M_{10},\, L_2(11),\, A_{3,5}$, or $ Q_8 \rtimes S_3$, then $N_3=0$.
		\item \label{item:N3<=1} If $\Aut^s(X) \simeq (C_3^3 \rtimes_3 C_3^2)\rtimes Q_8$, then $N_3 = 1$.
		\item \label{item:N3<=10} If $\Aut^s(X) \simeq C_3^4 \rtimes A_6$, then $N_3 = 10$.
	\end{enumerate}
\end{lemma}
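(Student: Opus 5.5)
By Proposition \ref{prop:fixed loci codim2}, an order $3$ element $g$ fixes a codimension $2$ locus in $F(X)$ exactly when a lift of $g$ to $\mathrm{GL}_6$ has normal form $\diag(1,1,1,\om,\om,\om)$. Such a lift has two eigenvalues, each of multiplicity three. So $N_3$ counts the subgroups $\langle g\rangle$ generated by order $3$ elements whose lifts have this eigenvalue pattern, up to scalars. Both $g$ and $g^{-1}$ have this form, so $N_3$ is half the number of such elements. The plan is to go through the maximal groups of Proposition \ref{prop:maximalG}, using the explicit actions from Appendix \ref{appendix}, and determine for each conjugacy class of elements of order $3$ whether its elements have this normal form.

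For part (\ref{item:N3=0}) we need that no order $3$ element has type $(3,3)$. For $L_2(11)$, $A_{3,5}$ and $Q_8\rtimes S_3$ there are few conjugacy classes of order $3$ elements, and the normal form can be read off from one explicit representative. For $A_7$, on $X^1(A_7)$ (the permutation action on $\sum x_i=\sum x_i^3=0$), a $3$-cycle has eigenvalues $(1,1,1,1,\om,\om^2)$ and a product of two $3$-cycles has eigenvalues $(1,1,\om,\om,\om^2,\om^2)$, so neither is of type $(3,3)$. On $X^2(A_7)$ we compute the eigenvalues of representatives of the two classes of order $3$ elements, from the matrices in the appendix or from the character of the relevant $6$-dimensional projective representation. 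For $M_{10}$, all order $3$ elements lie in $A_6\subset M_{10}$ and form two $A_6$-classes fused in $M_{10}$, so again a character computation suffices. Since eigenvalues are defined only up to a common scalar, we normalise and check that no scalar multiple of the spectrum equals $\{1,1,1,\om,\om,\om\}$. A useful test is this: in type $(3,3)$, the trace of the lift is $3(1+\om)$, up to a cube root of unity, so it has absolute value $3$. We verify that no order $3$ element has a character value of absolute value $3$.

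For part (\ref{item:N3<=1}), Appendix \ref{appendix} (following \cite[Section 6.1]{Koike2024}) gives $G=3_+^{1+4}\rtimes Q_8$, where the centre of $3_+^{1+4}$ is generated by $\diag(1,1,1,\om,\om,\om)$. This centre is normal in $G$, so it contributes one subgroup. We then show there are no others. The non-central elements of $3_+^{1+4}$ act, up to scalars, either as permutations of coordinates within the two blocks of three or as diagonal matrices with eigenvalue multiplicities other than $(3,3)$. The $Q_8$ part contains no elements of order $3$, and any order $3$ element of $G$ lies in the normal Sylow $3$-subgroup $3_+^{1+4}$, so only the extraspecial group needs checking. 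There we group elements by their image in $C_3^4$, which determines the lift up to a central factor, and check traces.

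For part (\ref{item:N3<=10}), $G=C_3^4\rtimes A_6$ acts on the Fermat cubic by diagonal matrices $\diag(\om^{a_1},\dots,\om^{a_6})$ with $\sum a_i\equiv 0$, taken modulo scalars, together with even permutations. We count the order $3$ elements of type $(3,3)$ in two cases.
\begin{enumerate}
\item Diagonal elements. Up to scalars, type $(3,3)$ means the exponent vector takes two values, each three times. This is determined by a partition of the six coordinates into two triples, giving $\binom{6}{3}/2=10$ partitions, and each partition gives one subgroup $\{1,g,g^{-1}\}$.
\item Non-diagonal elements of order $3$, whose image in $A_6$ is a $3$-cycle or a product of two $3$-cycles. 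A $3$-cycle composed with a diagonal matrix acts on its $3$-dimensional block with eigenvalues $1,\om,\om^2$ up to a scalar, so it cannot be of type $(3,3)$. For a double $3$-cycle each block again contributes a full set $\{c,c\om,c\om^2\}$, giving spectrum $\{c,c\om,c\om^2,c',c'\om,c'\om^2\}$, which is never $(3,3)$.
\end{enumerate}
This gives $N_3=10$.

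The main obstacle is the case-by-case eigenvalue computation for $X^2(A_7)$, $M_{10}$ and $L_2(11)$, where the actions are not monomial. I would handle these with the character tables of the Schur covers acting on $V$ (for example $3.A_7$ and $3.A_6$), using the absolute-value-$3$ trace criterion above.
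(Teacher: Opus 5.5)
\textbf{Verdict.} Your proof is correct. Part (3) is the paper's own argument. For parts (1) and (2) you take a different, more conceptual route.

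\textbf{Part (3).} The paper writes $g=\diag(\om^{a_0},\dots,\om^{a_5})\circ\sigma$ and uses $P_g(t)=\prod_i (t^{k_i}-\om^{\sum a_j})$ over the cycles of $\sigma$. So every nontrivial cycle contributes a full coset $c\{1,\om,\om^2\}$, only diagonal elements with two exponent values of multiplicity three survive, and there are $\binom{6}{3}/2=10$ such subgroups. This matches your argument.

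\textbf{Parts (1) and (2).} The paper gives no structural argument here. It runs GAP on the explicit matrices of Appendix A and compares characteristic polynomials with that of $\diag(1,1,1,\om,\om,\om)$: for representatives of the order-$3$ classes, or for all order-$3$ elements of the lifted group in case (2).
\begin{itemize}
\item In (2) you observe that $\langle p_1,\dots,p_5\rangle\simeq 3^{1+4}_+$ is the normal Sylow $3$-subgroup, that it is block diagonal, and that its centre is generated by the type-$(3,3)$ element. This explains why $N_3=1$ rather than merely certifying it.
\item In (1) you identify the projective representations instead of computing with the matrices. For $L_2(11)$ the lift is $1\oplus 5$. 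For $X^2(A_7)$ and both $M_{10}$-cubics the lifts are faithful representations of triple covers, restricting to $3.A_6$; for $M_{10}$ this is forced, since $M_{10}$ has no faithful $6$-dimensional linear representation. This lets you use character tables in place of the messy matrices, but you must justify these identifications, which the computer check sidesteps.
\end{itemize}

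\textbf{First point to tighten: the trace test.} The test $|\mathrm{tr}|=3$ is only a necessary condition. An element of type $\diag(1,1,1,1,\om,\om^2)$ also has trace $4+\om+\om^2=3$, and such elements do occur: $3$-cycles on $X^1(A_7)$, $3$-cycles in $A_{3,5}$, and $p_3$ in case (2).

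Where you apply the test, it works only because the relevant character values are $0$:
\begin{itemize}
\item for $L_2(11)$, $\chi_5(3A)=-1$, so the trace on $1\oplus 5$ is $0$;
\item for $X^2(A_7)$ and both $M_{10}$-actions, the faithful characters of $3.A_6$ vanish on the preimages of both classes of order-$3$ elements (these classes do not split), and every order-$3$ element there is conjugate into $A_6$.
\end{itemize}
So all these elements have type $(2,2,2)$; you should state and check this. If you want a test that actually separates the types, normalise the lift so that $M^3=I$. Then type $(3,3)$ has trace in $-3\{1,\om,\om^2\}$, while type $(4,1,1)$ has trace in $3\{1,\om,\om^2\}$.

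\textbf{Second point to tighten: the description in (2).} The non-central elements of $3^{1+4}_+$ are not just block permutations or diagonal matrices; $p_2p_4$ is neither. The correct statement is:
\begin{itemize}
\item every element is a pair $(A,B)$ with $A,B\in 3^{1+2}$ acting on the two blocks;
\item a non-scalar element of $3^{1+2}$ has spectrum $c\{1,\om,\om^2\}$, so any element with a non-scalar block has at least three distinct eigenvalues;
\item hence only elements scalar on both blocks, i.e.\ central ones, can have type $(3,3)$.
\end{itemize}
It is this eigenvalue argument, not $|\mathrm{tr}|$, that excludes elements such as $p_3$.
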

\begin{proof}
The lemma follows from Proposition \ref{prop:fixed loci codim2} and the explicit description in Appendix \ref{appendix} of the actions of $\Aut^s(X)$ as subgroups of $\PGL_6$. 

In particular, for the actions of the groups in (\ref{eq:maximalG}) different from $C_3^4 \rtimes A_6$, we run a computer program to look for the subgroups generated by an element of order $3$ with normal form $\diag(1,1,1,\omega, \omega,\omega)$; see Appendix \ref{app:L3} for the GAP code.
It turns out that, in case \eqref{item:N3=0}, the characteristic polynomial of any element of $\Aut^s(X)$ of order $3$ is different from the characteristic polynomial $P_\phi$ of $\phi \coloneqq\diag(1,1,1,\om,\om,\om)$, hence $N_3=0$; in case \eqref{item:N3<=1} there is precisely one subgroup of order 3 in $(C_3^3 \rtimes_3 C_3^2)\rtimes Q_8$ generated by an element with characteristic polynomial $P_\phi$, hence $N_3 = 1$.

In case \eqref{item:N3<=10}, any element of $\Aut^s(X)$ is of the form
$$g=\diag(\om^{a_0},\om^{a_1},\ldots,\om^{a_4},\om^{a_5}) \circ \sigma$$ 
with $a_i \in \Z/{3\Z}$, $\sum_{i=0}^5 a_i=0$, $a_0=0$, and $\sigma \in A_6$. Writing $\sigma$ as a product of disjoint cycles $\sigma=\tau_1 \cdot \ldots \cdot \tau_m$, where $\tau_i=(j_1,\ldots, j_{k_i})$ has length $k_i$ and $\sum_{i=1}^m k_i=6$, the characteristic polynomial of $g$ is
$$
P_g(t) = \prod_{i=1}^m (t^{k_i}- \om^{a_{j_1}+ \ldots + a_{j_{k_i}}}).
$$
Then $P_g=P_\phi$ if and only if $\sigma=\mathrm{Id}$ and $\{a_0,a_1,\ldots,a_5\}=\{0,0,0,1,1,1\}$. Thus, there are at most 10 subgroups $\langle g \rangle$ in $\Aut^s(X)$ fixing a locus of codimension 2 in $F(X)$.
\end{proof}

\section{Second Betti number of a terminalization}
\begin{notation}
	\label{not:term}
	Let $F(X)$ be the Fano variety of lines on a smooth cubic fourfold $X \subset \PP^5$. Let $G$ be a finite subgroup of $\PGL_6$ acting symplectically on $X$. Denote by 
	\begin{itemize}
	    \item $q$ the quotient map $q \colon F(X) \to F(X)/G$,
	    \item $Y$ a terminalization of $F(X)/G$ and $p \colon Y \to F(X)/G$,
	    \item $\Sigma$ the reduced singular locus of $F(X)/G$.
	\end{itemize}
	For $g \in G$, denote by $F_g \subset F(X)$ the component of the fixed locus of $g$ of codimension $2$ (if any). By Proposition \ref{prop:fixed loci codim2}, if $F_g$ exists, then $\ord(g)\in \{2,3\}$ and $F_g$ is unique.
\end{notation}

We recall some properties. The quotient $F(X)/G$ has rational singularities \cite[Proposition 5.13]{KollarMori}, is symplectic, has canonical singularities \cite[Claim 2.3.1]{Kollar2013}, and is an IHS variety \cite[Proposition 3.16]{BertiniGrossiMauriMazzon2024}. 

A terminalization of $F(X)/G$ exists \cite[Corollary 1.4.3]{BirkarCasciniHaconEtAl}, and in particular it can be chosen projective and $\Q$-factorial. Moreover, it is unique up to isomorphism in codimension one \cite[Corollary 3.54]{KollarMori}, and is an IHS variety when it K{\"a}hler (or equivalently projective since it is always Moishezon) \cite[Proposition 12]{Schwald2020}. 

Following \cite[Theorem 2.3]{Kaledin2006},  denote by $(F(X)/G)^\circ$ the union of the smooth locus of $F(X)/G$ and the points of $q(F_g)$ with isotropy\footnote{The isotropy (group) of a point $z$ in $F(X)/G$ is the stabilizer of a point of the orbit $q^{-1}(z)$, up to conjugation.} $\langle g \rangle$, for some $g \in G$; the dissident locus of $F(X)/G$ is defined to be the complement of $(F(X)/G)^\circ$.

Away from the dissident locus of $F(X)/G$, a terminalization $Y$ of $F(X)/G$ is isomorphic to the blowup of the irreducible components of $\Sigma$ of codimension $2$ (see \cite[Proposition 3.7 and Corollary 5.9]{BertiniGrossiMauriMazzon2024}), i.e.
\[\begin{tikzcd}[column sep=tiny]
	&& {F(X) \supset F_g \hspace{115pt}} \\
	Y && {F(X)/G \supset \Sigma \coloneqq\Sing(F(X)/G) \supseteq q(F_g)} \\
	{Y^\circ} & {\Bl_{\Sigma \cap (F(X)/G)^\circ} (F(X)/G)^\circ} & {(F(X)/G)^\circ \hspace{135pt}}
	\arrow["q"', shift right=24, from=1-3, to=2-3]
	\arrow["p", from=2-1, to=2-3]
	\arrow["\normalsize{\simeq}"{description}, draw=none, from=3-1, to=3-2]
	\arrow[hook, from=3-1, to=2-1]
	\arrow[from=3-2, to=3-3]
	\arrow[hook, shift left=24, from=3-3, to=2-3]
\end{tikzcd}\]
and $\codim(Y \setminus Y^\circ) \geq 2$. Therefore, away from a locus of codimension $\geq 2$, a terminalization $Y$ is obtained from $F(X)/G$ by blowing up once the components $q(F_g)$ of $\Sigma$.

\begin{theorem}
	\label{thm:b2}
	In the notation of Section \ref{subsec:action} and \ref{not:term}, let $L=H^2(F(X),\Z)$. Among the components $q(F_g)$ of $\Sigma=\Sing(F(X)/G)$, we denote by
	\begin{itemize}
		\item $n_2$ the number of components with $\ord(g)=2$;
		\item $n_{31}$ the number of components with $g$ of order $3$ and such that $N_G(g)\setminus C_G(g)$ contains elements of even order;
		\item $n_{32}$ the number of components with $g$ of order $3$ and such that $N_G(g)\setminus C_G(g)$ contains no element of even order.
	\end{itemize}
	The following identity holds
	$$
	b_2(Y)= \rk(L^G) + n_2 + n_{31} + 2n_{32}.
	$$
\end{theorem}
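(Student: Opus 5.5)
Since $Y$ is a projective terminalization of an IHS quotient with rational singularities, I would compute $b_2(Y)$ as $b_2(F(X)/G)$ plus the number of $p$-exceptional prime divisors, counted with how many independent classes each contributes. The first term: $H^2(F(X)/G,\Q)\simeq H^2(F(X),\Q)^G$ because $q$ is a finite quotient map, so $b_2(F(X)/G)=\rk(L^G)$. Since $p$ is a projective birational morphism with rational singularities on the target, $H^2(Y,\Q)\simeq p^*H^2(F(X)/G,\Q)\oplus \bigoplus_E \Q[E]$, where $E$ ranges over the irreducible $p$-exceptional divisors. I would justify this with the Leray spectral sequence together with $R^1p_*\Q=0$, and check that $R^2p_*\Q$ contributes exactly the classes of exceptional divisors. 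This works over the codimension-two part because the fibres there are unions of rational curves, and is harmless over the codimension $\geq 4$ dissident locus. This step follows the analogous argument in \cite{BertiniGrossiMauriMazzon2024}, which I would cite.

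So everything reduces to counting exceptional divisors over each component $q(F_g)$ of $\Sigma$. Away from the dissident locus, $Y$ is the blowup of $(F(X)/G)^\circ$ along $\Sigma\cap(F(X)/G)^\circ$. Transversally to $q(F_g)$, at a general point the singularity is $\C^2/\langle g\rangle$ times a smooth surface germ. This is an $A_1$ singularity when $\ord(g)=2$ and an $A_2$ singularity when $g$ has normal form $\diag(1,1,1,\omega,\omega,\omega)$. Indeed, $g$ acts symplectically on the normal bundle $N_{F_g}$ by $\diag(\omega,\omega^2)$, by Proposition \ref{prop:fixed loci codim2}. The minimal resolution of the transversal $A_1$ has one exceptional curve, and that of $A_2$ has two curves $C_1,C_2$. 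The exceptional divisor over $q(F_g)$ is the family of these curves over the generic point of $q(F_g)$. Its number of irreducible components equals the number of orbits of the monodromy, that is, of $\pi_1$ of the smooth part of $q(F_g)$, acting on $\{C_1,C_2\}$. For $\ord(g)=2$ this gives one divisor, which accounts for $n_2$.

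The main obstacle is the order-$3$ case: deciding when the monodromy swaps $C_1$ and $C_2$. I would identify it as follows. The stabiliser of $F_g$ in $G$ is $N_G(\langle g\rangle)$, and the generic point of $q(F_g)$ is the image of $F_g$ under $N_G(g)/\langle g\rangle$. An element $h\in N_G(g)$ either centralises $g$ or conjugates it to $g^{-1}=g^2$. In the second case $h$ interchanges the eigenlines of $g$ on $N_{F_g}$ (the $\omega$- and $\omega^2$-eigenspaces), and hence swaps $C_1$ and $C_2$, which correspond to these two eigendirections in the toric description of the $A_2$ resolution. The monodromy also includes loops around the points where $F_g$ meets fixed loci of other elements, but I would argue that these come from the same stabiliser action. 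The transversal symmetry $C_1\leftrightarrow C_2$ of the $A_2$ resolution is induced precisely by elements of $N_G(g)\setminus C_G(g)$ acting on $F_g$.

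The delicate point is why the criterion in the statement refers to elements of even order. An element $h\in N_G(g)\setminus C_G(g)$ inverts $g$, so its image in $N_G(g)/C_G(g)\simeq C_2$ is nontrivial, which forces $\ord(h)$ to be even. This suggests that the condition is equivalent to $N_G(g)\neq C_G(g)$. I would check carefully whether some odd-order phenomenon, or the action of $h$ on $F_g$ (for instance acting with fixed points versus freely), changes the count. If the swap occurs, the two curves are identified and give one exceptional divisor, contributing $n_{31}$. Otherwise they give two divisors, contributing $2n_{32}$. Summing these contributions yields the stated formula.
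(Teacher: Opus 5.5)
Your proposal is correct and follows essentially the paper's argument: reduce via the blowup formula to counting $p$-exceptional divisors, note the transversal $A_1$/$A_2$ singularities, and in the order-$3$ case count orbits of $\Stab(F_g)=N_G(g)$ on the two eigen-directions $L_\omega, L_{\omega^2}$ of the normal bundle. The paper phrases this last step through the divisors $\tilde E_\omega,\tilde E_{\omega^2}$ obtained by blowing up the two sections of $\PP(N_{F_g/F(X)})$, citing \cite[Lemma 5.3]{BertiniGrossiMauriMazzon2024}; your remark that every element of $N_G(g)\setminus C_G(g)$ automatically has even order is right, so the criterion is simply $N_G(g)\neq C_G(g)$ and the hedging at the end of your sketch is unnecessary.
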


\begin{proof}
	Let $\{E_i\}_{i \in I}$ be the set of exceptional divisors $E_i$ of the terminalization $p$. By the blowup formula, we have
	$$
	H^2(Y, \Q) \simeq H^2(F(X)/G, \Q) \oplus \bigoplus_{i \in I}H^0(E_i, \Q),
	$$
	thus $b_2(Y)=\rk(L^G)+|I|$. We reduce to compute the number of $p$-exceptional divisors. Away from the dissident locus, the divisors $E_i$ correspond to the exceptional divisors of the blowup of the components $q(F_g) \subseteq \Sigma$ of codimension $2$, for $g \in G$. In particular
	\begin{enumerate}
		\item when $\ord(g)=2$, then $q(F_g)$ has transversal $A_1$ singularities, and $p$ extracts one exceptional divisor from $q(F_g)$;
		\item when $\ord(g)=3$, then $q(F_g)$ has transversal $A_2$ singularities, and by Lemma \ref{lem:n3} below, $p$ extracts one exceptional divisor (respectively two exceptional divisors) from $q(F_g)$ if $N_G(g)\setminus C_G(g)$ contains (respectively does not contain) elements of even order.
	\end{enumerate} 
	We conclude the statement by proving Lemma \ref{lem:n3}.
\end{proof}

\begin{lemma}
	\label{lem:n3}
	Let $g \in G$ be an element of order $3$ and fixing a locus $F_g \subset F(X)$ of codimension $2$. Let $p' \colon Y' \to F(X)/G$ be the blowup of $F(X)/G$ along $q(F_g)$. 
	Away from the dissident locus, the number of exceptional divisors extracted by $p'$ is
	 \begin{itemize}
		\item 1 if $N_G(g)\setminus C_G(g)$ contains elements of even order,
		\item 2 otherwise.
	\end{itemize}
\end{lemma}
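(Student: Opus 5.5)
Away from the dissident locus, we work locally analytically near a general point $z \in q(F_g)$ whose isotropy is exactly $\langle g \rangle$. Near a preimage $x \in F_g$ we have $T_xF(X) = T_xF_g \oplus N_x$. Since $g$ is symplectic of order $3$ and fixes $F_g$ pointwise, $g$ acts on the normal plane $N_x$ by $\diag(\omega,\omega^2)$. Hence a neighbourhood of $q(x)$ in $F(X)/\langle g\rangle$ is $\C^2 \times (\C^2/C_3)$, a transversal $A_2$ singularity. The blowup of $\C^2/C_3$ at the origin is its minimal resolution, with exceptional locus two $(-2)$-curves $C_1 \cup C_2$. Therefore, over the étale-local slice, the blowup of $F(X)/\langle g \rangle$ along $q(F_g)$ has exceptional locus a conic bundle over $q(F_g)$ (the reduced image of $F_g$) whose fibres are two lines meeting in a point. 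The global number of exceptional divisors over $q(F_g)$ equals the number of orbits of the monodromy of the pair $\{C_1,C_2\}$ as one moves along the (open part of the) component $q(F_g)$ in $F(X)/G$.

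The plan is therefore to identify this monodromy with the action of $N_G(g)/C_G(g)$. The general points of $q(F_g)$ are the image of $F_g^\circ$ (points with stabiliser exactly $\langle g\rangle$) under the map $F_g^\circ \to q(F_g)$. This map is a Galois étale quotient by the subgroup of $G$ preserving $F_g$. By uniqueness of the codimension $2$ component (Notation \ref{not:term}), an element $h$ maps $F_g$ to $F_{hgh^{-1}}$. Hence $h$ preserves $F_g$ iff $hgh^{-1}$ fixes $F_g$, i.e. iff $hgh^{-1} \in \langle g\rangle$, since the stabiliser of a general point is $\langle g \rangle$. So the stabiliser is $N_G(g)$, acting through $N_G(g)/\langle g\rangle$. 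Now $F_g$ is connected, being an abelian surface by Proposition \ref{prop:fixed loci codim2}. Hence $F_g^\circ$ is connected, and the two curves $C_1,C_2$ are fixed over $F_g^\circ$ itself. They correspond to the two eigenlines of $g$ on $N_x$: the $\omega$- and $\omega^2$-eigenlines, equivalently the two branches of $\{uv=0\}$ for the invariants $u^3,v^3,uv$. Consequently the monodromy on $\{C_1,C_2\}$ is given by how $h \in N_G(g)$ acts on these eigenlines.

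Next we compute this action. If $h \in C_G(g)$, then $h$ commutes with $g$ and preserves each eigenspace of $dg$ on $N$, so it fixes both $C_1$ and $C_2$. If $h \in N_G(g)\setminus C_G(g)$, then $hgh^{-1}=g^2$, so $dh$ maps the $\omega$-eigenspace of $g$ to the $\omega^2$-eigenspace and swaps $C_1$ and $C_2$. Thus there is one exceptional divisor if $N_G(g)\ne C_G(g)$, and two otherwise.

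The main obstacle is reconciling this with the statement's formulation ``contains elements of even order''. Since $[N_G(g):C_G(g)]\le 2$, the coset $N_G(g)\setminus C_G(g)$ is either empty or an index-$2$ coset. I will argue that it always contains an element of even order when nonempty. Take $h$ in the coset and write $h=h_2h_{2'}$ as a product of its commuting $2$-part and $2'$-part. Since $h\notin C_G(g)$ and $h^{\text{odd}}$ stays in the same coset, some power of $h$ lies in the coset and has $2$-power order $\ge 2$. So the two formulations agree, and the condition is exactly $N_G(g)\ne C_G(g)$. I would double-check this equivalence, and also check that working on the open set $F_g^\circ$ (rather than all of $F_g$) loses nothing in codimension one, which holds because the dissident locus has codimension $\ge 3$ inside $F(X)/G$.
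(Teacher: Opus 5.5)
Your argument is correct and is essentially the paper's proof. There too the two exceptional components over $q(F_g)$ come from the eigen-subbundles in $N_{F_g/F(X)}\simeq L_\omega\oplus L_{\omega^2}$, via the blowups of \cite[Lemma 5.3]{BertiniGrossiMauriMazzon2024}; the group permuting them is $\Stab(F_g)=N_G(g)$ (Lemma \ref{lem:Stab Fg}, which also proves the point you take for granted, that the generic stabiliser along $F_g$ is exactly $\langle g\rangle$); and an element of $N_G(g)$ swaps them exactly when it is not in $C_G(g)$. Your remark that the even-order condition is automatic is right and can be shortened: any $h\in N_G(g)\setminus C_G(g)$ maps to the generator of $N_G(g)/C_G(g)\simeq C_2$, so $\ord(h)$ is even and the condition is simply $N_G(g)\neq C_G(g)$.
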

\begin{proof}
	We follow the argument of \cite[Lemma 5.3]{BertiniGrossiMauriMazzon2024}. The eigenvalues of the symplectic action of $g$ on the normal bundle $N_{F_g/F(X)}$ of $F_g$ in $F(X)$ are $\om$ and $\om^2$. Thus, $N_{F_g/F(X)}$ splits into the sum of two $\langle g \rangle$-equivariant line bundles 
	$$N_{F_g/F(X)} \simeq L_\om \oplus L_{\om^2}$$ 
	where $g$ acts on $L_\om$ by scaling by $\om$, and on $L_{\om^2}$ by scaling by $\om^2$. 
	
	We denote by $p_1$ the blowup of $F(X)$ along the $G$-orbit of $F_g$, and by $\tilde{E} \coloneqq p_1^{-1}(F_g) \simeq \PP(N_{F_g/F(X)})$ the exceptional divisor over $F_g$.
	Let $p_2$ be the blowup of the sections $s_\om \coloneqq \im(\PP(L_\om) \to \PP(L_\om \oplus L_{\om^2}))$ and $s_{\om^2} \coloneqq \im(\PP(L_{\om^2}) \to \PP(L_\om \oplus L_{\om^2}))$ of $\tilde{E} \to F_g$, with exceptional divisors $\tilde{E}_\om$ and $\tilde{E}_{\om^2}$.
	
	As in the proof of \cite[Lemma 5.3]{BertiniGrossiMauriMazzon2024}, the blowup $p'$ extracts two exceptional divisors unless there exists an element $\iota \in G$ such that
	$$\iota(\tilde{E}_\om)= \tilde{E}_{\om^2}.$$
	If there exists such $\iota$, then 
	\begin{itemize}
		\item[-] $\iota$ is of even order, otherwise $\tilde{E}_\om= \iota^{\ord(\iota)}(\tilde{E}_\om)=\iota(\tilde{E}_\om)=\tilde{E}_{\om^2}$ which is impossible;
		\item[-] $\iota$ preserves $F_g$, hence it is an element of $N_G(g)$ by Lemma \ref{lem:Stab Fg} below, but it does not commute with $g$ as otherwise $\iota(\tilde{E}_\om)= \tilde{E}_{\om}.$
	\end{itemize}
	Conversely, if  $N_G(g)\setminus C_G(g)$ contains an element $\iota$ of even order, then $\iota(\tilde{E}_\om)= \tilde{E}_{\om^2}.$
\end{proof}

\begin{lemma} \label{lem:Stab Fg}
	If $g \in G$ fixes a locus $F_g \subset F(X)$ of codimension $2$, then 
	\begin{align*}
		\Fix(F_g) & \coloneqq \{ h \in G \,|\, \forall x \in F_g, \, h(x)=x\} = \langle g \rangle\\
		\Stab(F_g) & \coloneqq \{ h \in G \,|\, h(F_g)=F_g\} = N_G(g).
	\end{align*}
\end{lemma}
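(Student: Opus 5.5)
We prove the two equalities in turn, using Proposition \ref{prop:fixed loci codim2} (uniqueness of the codimension $2$ component $F_h$ of the fixed locus of any $h\in G$) together with the local structure of a symplectic action near a fixed point.

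\textbf{Step 1: $\Fix(F_g)=\langle g\rangle$.} The inclusion $\langle g\rangle\subseteq \Fix(F_g)$ is clear. Conversely, take $h\in\Fix(F_g)$ and a general point $x\in F_g$. At $x$, both $g$ and $h$ act on $T_xF(X)$ preserving the symplectic form $\sigma_x$ and acting trivially on $T_xF_g$. Since $F_g$ is a K3 or abelian surface, it is a symplectic submanifold, so $\sigma_x$ restricts nondegenerately to $T_xF_g$. Hence $T_xF(X)=T_xF_g\oplus (T_xF_g)^{\perp_\sigma}$ is an orthogonal splitting preserved by $h$. Thus $h$ acts on the $2$-dimensional normal space $N_x\simeq (T_xF_g)^{\perp_\sigma}$ through $\mathrm{Sp}(N_x)=\SL_2(\C)$. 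The stabilizer $G_x$ of $x$ is finite and acts faithfully on $T_xF(X)$, because a finite group acting holomorphically is linearizable at a fixed point (Cartan). Therefore $\Fix(F_g)$ embeds into the finite subgroup $K\subset \SL(N_x)$ consisting of elements acting trivially on $T_xF_g$.

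\textbf{Step 1, continued: showing $K$ is cyclic.} A nontrivial element of $\SL_2$ of finite order has no eigenvalue $1$. So every $1\neq h\in K$ has fixed locus near $x$ exactly equal to $F_g$ (codimension $2$). By Proposition \ref{prop:fixed loci codim2}, $\ord(h)\in\{2,3\}$, and in particular $F_h=F_g$. It remains to show $K=\langle g\rangle$.
\begin{itemize}
\item If $\ord(g)=2$, then $g$ acts as $-\mathrm{Id}$ on $N_x$. Any $h\in K$ also has order $2$ or $3$. An element of order $3$ would make the product $gh$ have order $6$ in $\SL_2$ (it would be $-h$) while still fixing $F_g$ pointwise, contradicting the order constraint. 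So every nontrivial element of $K$ is $-\mathrm{Id}$, and $K=\{\pm \mathrm{Id}\}$.
\item If $\ord(g)=3$, the same argument excludes elements of order $2$, since $-\mathrm{Id}\cdot g$ would have order $6$. Hence $K$ is a $3$-group in $\SL_2$, thus cyclic. If $K$ had order $9$ it would contain elements of order $9$, which is excluded. So $K=\langle g\rangle$.
\end{itemize}
Combining with the embedding $\Fix(F_g)\hookrightarrow K$ and $\langle g\rangle\subseteq \Fix(F_g)$ gives $\Fix(F_g)=\langle g\rangle$.

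\textbf{Step 2: $\Stab(F_g)=N_G(g)$.} For $h\in G$, the conjugate $hgh^{-1}$ fixes $h(F_g)$ pointwise. By the uniqueness in Proposition \ref{prop:fixed loci codim2}, $h(F_g)=F_{hgh^{-1}}$.
\begin{itemize}
\item If $h\in N_G(g)$, then $hgh^{-1}\in\{g,g^{-1}\}$, and $F_{g^{-1}}=F_g$, so $h(F_g)=F_g$ and $h\in\Stab(F_g)$.
\item Conversely, if $h(F_g)=F_g$, then $hgh^{-1}$ fixes $F_g$ pointwise, so $hgh^{-1}\in\Fix(F_g)=\langle g\rangle$ by Step 1, i.e. $h\in N_G(g)$.
\end{itemize}

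The main obstacle is the local analysis in Step 1, namely the faithfulness and linearization of the stabilizer at a general point together with the exclusion of larger cyclic groups fixing $F_g$. The key input is that Proposition \ref{prop:fixed loci codim2} forbids elements of order other than $2$ or $3$ from having codimension $2$ fixed components, applied to the products $gh$ constructed above.
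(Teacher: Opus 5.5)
Your proof is correct and follows essentially the same route as the paper. You map $\Fix(F_g)$ into a finite subgroup of $\SL(2,\C)$ acting on the $2$-dimensional normal space at a point of $F_g$, use Proposition \ref{prop:fixed loci codim2} to force this subgroup to be $\langle g\rangle$, and then identify $\Stab(F_g)$ with $N_G(g)$ via $h(F_g)=F_{hgh^{-1}}$. The only difference is that you spell out, through the order-$6$ products $gh$, the step the paper compresses into an appeal to the classification of finite subgroups of $\SL(2,\C)$.
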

\begin{proof}
	We clearly have $\langle g\rangle \subseteq \Fix(F_g)$. Conversely, let $\eta$ be the generic point of $F_g$. Then $\Fix(F_g)$ acts non-trivially on the normal space $N_{F_g/X, \eta} \simeq \C^2$. By the classification of finite subgroups of $\SL(2,\C)$ and Proposition \ref{prop:fixed loci codim2}, we conclude that $\Fix(F_g)$ is a cyclic group and necessarily generated by $g$.
	
	Let $h \in G$, then we have 
	$$
	h(F_g)=F_{g'} \iff F_{hgh^{-1}}=F_g' \iff hgh^{-1} \in \Fix(F_{g'}),
	$$
	hence by the first part of the proof, $\Stab(F_g)= \{h \in G \,|\, hgh^{-1} \in \langle g \rangle \} \eqqcolon N_G(g)$.
\end{proof}

\begin{prop}\label{prop:b2_grouptheoretic} In the Notation \ref{not:term} and of Theorem \ref{thm:b2}, we have that
	\begin{itemize}
		\item $n_2$ is the number of conjugacy classes of involutions of $G$;
		\item $n_{31}$ is the number of conjugacy classes of subgroups of $G$ of order $3$ whose generator $g$ has normal form $\diag(1,1,1,\om,\om,\om)$ and  $N_G(g)\setminus C_G(g)$ contains elements of even order;
		\item $n_{32}$ is the number of conjugacy classes of subgroups of $G$ of order $3$ whose generator $g$ has normal form $\diag(1,1,1,\om,\om,\om)$ and  $N_G(g)\setminus C_G(g)$ contains no elements of even order.
	\end{itemize}
\end{prop}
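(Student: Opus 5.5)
The plan is to set up a bijection between the codimension-2 components $q(F_g)$ of $\Sigma$ and conjugacy classes of cyclic subgroups $\langle g\rangle$ whose generator fixes a codimension-2 locus. Once this is in place, Proposition \ref{prop:fixed loci codim2} identifies these subgroups concretely: they are the order-2 subgroups and the order-3 subgroups generated by an element with normal form $\diag(1,1,1,\om,\om,\om)$.

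\textbf{Step 1: the map is well defined.} Send $\langle g\rangle$ to $q(F_g)$. By Notation \ref{not:term}, $F_g$ is the unique codimension-2 fixed component of $g$. Since $F_{g^{-1}}=F_g$, the component depends only on the subgroup $\langle g \rangle$. Moreover $F_{hgh^{-1}}=h(F_g)$, so conjugate subgroups give the same image under $q$, and the map descends to conjugacy classes. It is surjective onto the codimension-2 components of $\Sigma$: every such component is the image of a codimension-2 component of the fixed locus of some nontrivial $g$, because $q$ is étale away from fixed loci.

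\textbf{Step 2: the map is injective.} This is the key step. Suppose $q(F_g)=q(F_{g'})$. Since both are irreducible of the same dimension, $F_{g'}=h(F_g)=F_{hgh^{-1}}$ for some $h\in G$. By Lemma \ref{lem:Stab Fg}, $\Fix(F_{g'})=\langle g'\rangle$, and this group also equals $\Fix(F_{hgh^{-1}})=\langle hgh^{-1}\rangle$. Hence $\langle g'\rangle$ and $\langle g\rangle$ are conjugate.

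\textbf{Step 3: order 2.} Every involution $g$ yields an $F_g$ by Proposition \ref{prop:fixed loci codim2}. A subgroup of order 2 is the same as an involution, so $n_2$ equals the number of conjugacy classes of involutions.

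\textbf{Step 4: order 3.} The order-3 contributions are exactly the subgroups generated by an element with normal form $\diag(1,1,1,\om,\om,\om)$. The inverse $g^2$ has normal form $\diag(1,1,1,\om^2,\om^2,\om^2)$, which is the same up to scalars in $\PGL_6$, so the condition is well defined on subgroups. The split into $n_{31}$ and $n_{32}$ is then read off from the definition in Theorem \ref{thm:b2}. This criterion is invariant under conjugation, since $N_G(hgh^{-1})=hN_G(g)h^{-1}$ and $C_G(hgh^{-1})=hC_G(g)h^{-1}$, and it is invariant under replacing $g$ by $g^{-1}$, since $N_G$ and $C_G$ depend only on $\langle g\rangle$.

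The only delicate point is the injectivity in Step 2, and Lemma \ref{lem:Stab Fg} handles it directly.
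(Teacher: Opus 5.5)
Your proof is correct and follows the paper's route. The paper states the proposition as a direct consequence of Theorem \ref{thm:b2} and Proposition \ref{prop:fixed loci codim2}, and your bijection between the components $q(F_g)$ and conjugacy classes of subgroups $\langle g\rangle$ unpacks exactly that claim, with your appeal to Lemma \ref{lem:Stab Fg} making explicit the injectivity step the paper leaves implicit.
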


\begin{proof}
	This description of $n_2, n_{31}$, and $n_{32}$ is a direct consequence of Theorem \ref{thm:b2} and Proposition \ref{prop:fixed loci codim2}.
\end{proof}

\section{Terminalizations of quotients of Fano varieties of lines}\label{sec:term}

For any smooth cubic fourfold $X$ and any group $G$ of symplectic automorphisms of $X$, Table \ref{table:term} provides the second Betti number and the fundamental group of the regular locus of any projective terminalization $Y$ of $F(X)/G$. By Proposition \ref{prop:maximalG}, modulo deformation equivalences, it suffices to consider as $G$ the maximal symplectic automorphism groups and their subgroups. 
The invariants in Table \ref{table:term} are computed with the aid of the computer algebra system GAP; see Appendix \ref{app:Gap}. In particular, for each group $G$, we list
\begin{itemize}
	\item the group ID of $G$ as in \href{https://people.maths.bris.ac.uk/~matyd/GroupNames/index500.html}{GroupNames}, when available, or the order of $G$;
	\item an alias of $G$ as abstract group, and we write (\textasteriskcentered) when this is not available or not completely determined (for instance when the semidirect product structure is not specified);
	\item \(\rk \coloneqq 23 - \rk\, H^2(F(X))^G\), see \cref{subsec:rk}; 
	\item the numbers \(n_2,\, n_{31},\, n_{32}\) of codimension 2 components of the singular locus of \(F(X)/G\), as defined in Theorem \ref{thm:b2};
	\item \(b_2(Y)  = 23 - \rk + n_2 + n_{31} + 2n_{32} \), see Theorem \ref{thm:b2};
	\item the fundamental group \(\pi_1(Y^{\mathrm{reg}})\simeq G/N\) of the regular locus of $Y$, where $N$ is the subgroup generated by elements of $G$ fixing a codimension $2$ locus, see \cite[Proposition 8.1]{BertiniGrossiMauriMazzon2024}.
\end{itemize}
	Moreover, in \cref{table:term} we identify rows corresponding to subgroups $G_1$ and $G_2$ of the same maximal symplectic automorphism group such that either $G_1$ and $G_2$ are conjugate subgroups or they give the same string of invariants $[G_i, \rk, n_2, n_{31}, n_{32}, b_2, \pi_1]$.

\begin{center}
\begin{longtable}[]{l|l|l|l|l|l} 
	\caption{Terminalizations of \(F(X)/G\)}
	\label{table:term}
	\endfirsthead
	\endhead
		\multicolumn{6}{c}{ \(G \subseteq A_{3,5}\) } \\ 
		ID          & \(G\)          & \textrm{rk}   & \(n_2\)  & \(b_2\)& \(\pi_1(Y^\reg)\)                    \\ \hline
		(2,1)       & \(C_2\)                          & 8  & 1        & 16 & \{1\}               \\ \hline
		(4,2)       & \(C^2_2\)                        & 12 & 3        & 14 & \{1\}               \\ \hline
		(4,1)       & \(C_4\)                          & 14 & 1        & 10 & \(C_2\)             \\ \hline
		(6,1)       & \(S_3\)                          & 14 & 1        & 10 & \{1\}               \\ \hline
		\rowcolor{backcolour}(6,2)       & \(C_6\)                          & 16 & 1        & 8  & \(C_3\)             \\ \hline
		(8,3)       & \(D_{8}\)                        & 15 & 3        & 11 & \{1\}               \\ \hline
		(10,1)      & \(D_{10}\)                       & 16 & 1        & 8  & \{1\}               \\ \hline
		(12,3)      & \(A_4\)                          & 16 & 1        & 8  & \(C_3\)             \\ \hline
		(12,5)      & \(C_2 \times C_6\)               & 18 & 3        & 8  & \(C_3\)             \\ \hline
		\rowcolor{backcolour}(12,1)      & \(C_3 \rtimes C_4\)              & 18 & 1        & 6  & \(S_3\)             \\ \hline
		(12,4)      & \(D_{12}\)                       & 16 & 3        & 10 & \{1\}               \\ \hline
		\rowcolor{backcolour}(18,3)      & \(C_3 \times S_3\)               & 18 & 1        & 6  & \(C_3\)             \\ \hline
		(18,4)      & \(C_3 \rtimes S_3 \simeq A_{3,3}\)              & 16 & 1        & 8  & \{1\}               \\ \hline
		(20,3)      & \(F_5\)                          & 18 & 1        & 6  & \(C_2\)             \\ \hline
		(24,12)     & \(S_4\)                          & 17 & 2        & 8  & \{1\}               \\ \hline
		(24,8)      & \(C_3 \rtimes D_8\)              & 18 & 3        & 8  & \{1\}               \\ \hline
		(30,2)      & \(C_3 \times D_{10}\)            & 20 & 1        & 4  & \(C_3\)             \\ \hline
		(36,11)     & \(C_3 \times A_4\)               & 18 & 1        & 6  & \(C^2_3\)           \\ \hline
		(36,10)     & \(S^2_3\)                        & 18 & 3        & 8  & \{1\}               \\ \hline
		(60,5)      & \(A_5\)                          & 18 & 1        & 6  & \{1\}               \\ \hline
		(60,7)      & \(C_3 \rtimes F_5\)              & 20 & 1        & 4  & \(S_3\)             \\ \hline
		(72,43)     & \(C_3 \rtimes S_4\)              & 18 & 2        & 7  & \{1\}               \\ \hline
		(120,34)    & \(S_5\)                          & 19 & 2        & 6  & \{1\}               \\ \hline
		(180,19)    & \(C_3 \times A_5\)               & 20 & 1        & 4  & \(C_3\)             \\ \hline
		(360,120)   & \(A_{3,5}\)                      & 20 & 2        & 5  & \{1\}               \\ 
		\multicolumn{6}{c}{} \\ 
		\multicolumn{6}{c}{\(G \subseteq L_2(11)\) } \\ 
		%&                                  &    &          &    &                     \\ \hline 
		ID          & \(G\)          & \textrm{rk}   & \(n_2\)  & \(b_2\)& \(\pi_1(Y^\reg)\)                    \\ \hline
		(2,1)       & \(C_2\)                          & 8  & 1        & 16 & \{1\}               \\ \hline
		(4,2)       & \(C^2_2\)                        & 12 & 3        & 14 & \{1\}               \\ \hline
		(6,1)       & \(S_3\)                          & 14 & 1        & 10 & \{1\}               \\ \hline
		\rowcolor{backcolour}(6,2)       & \(C_6\)                          & 16 & 1        & 8  & \(C_3\)             \\ \hline
		(10,1)      & \(D_{10}\)                       & 16 & 1        & 8  & \{1\}               \\ \hline
		(12,3)      & \(A_4\)                          & 16 & 1        & 8  & \(C_3\)             \\ \hline
		(12,4)      & \(D_{12}\)                       & 16 & 3        & 10 & \{1\}               \\ \hline
		(60,5)      & \(A_5\)                          & 18 & 1        & 6  & \{1\}               \\ \hline
		(660,13)    & \(L_2(11)\)                      & 20 & 1        & 4  & \{1\}               \\ \multicolumn{6}{c}{} \\ 
		\multicolumn{6}{c}{\(G \subseteq M_{10}\) } \\
		%&                                  &    &          &    &                     \\ \hline 
		ID          & \(G\)          & \textrm{rk}   & \(n_2\)  & \(b_2\)& \(\pi_1(Y^\reg)\)                    \\ \hline
		(2,1)       & \(C_2\)                          & 8  & 1        & 16 & \{1\}               \\ \hline
		(4,2)       & \(C^2_2\)                        & 12 & 3        & 14 & \{1\}               \\ \hline
		(4,1)       & \(C_4\)                          & 14 & 1        & 10 & \(C_2\)             \\ \hline
		(6,1)       & \(S_3\)                          & 14 & 1        & 10 & \{1\}               \\ \hline
		(8,1)       & \(C_8\)                          & 18 & 1        & 6  & \(C_4\)             \\ \hline
		(8,4)       & \(Q_8\)                          & 17 & 1        & 7  & \(C^2_2\)           \\ \hline
		(8,3)       & \(D_{8}\)                        & 15 & 3        & 11 & \{1\}               \\ \hline
		(10,1)      & \(D_{10}\)                       & 16 & 1        & 8  & \{1\}               \\ \hline
		(12,3)      & \(A_4\)                          & 16 & 1        & 8  & \(C_3\)             \\ \hline
		(16,8)      &   \(Q_8 \rtimes C_2\)                               & 18 & 2        & 7  & \(C_2\)             \\ \hline
		(18,4)      & \(C_3 \rtimes S_3 \simeq A_{3,3}\)              & 16 & 1        & 8  & \{1\}               \\ \hline
		(20,3)      & \(C_5 \rtimes C_4\)                          & 18 & 1        & 6  & \(C_2\)             \\ \hline
		(24,12)     & \(S_4\)                          & 17 & 2        & 8  & \{1\}               \\ \hline
		(36,9)      & \(C^2_3 \rtimes C_4\)            & 18 & 1        & 6  & \(C_2\)             \\ \hline
		(60,5)      & \(A_5\)                          & 18 & 1        & 6  & \{1\}               \\ \hline
		(72,41)     &  \(C_3^2 \rtimes Q_8\)                                & 19 & 1        & 5  & \(C^2_2\)           \\ \hline
		(360,118)   & \(A_6\)                          & 19 & 1        & 5  & \{1\}               \\ \hline
		(720,765)   & \(M_{10}\)                       & 20 & 1        & 4  & \(C_2\)             \\ \multicolumn{6}{c}{} \\ 
		\multicolumn{6}{c}{\(G \subseteq (C_3^3 \rtimes_3 C_3^2)\rtimes Q_8\) } \\ 
		%&                                  &    &          &    &                     \\ \hline 
		ID          & \(G\)          & \textrm{rk}   & \((n_2,n_{31},n_{32})\)  & \(b_2\)& \(\pi_1(Y^\reg)\)                    \\ \hline
		(2,1)       & \(C_2\)                          & 8  & (1,0,0)  & 16 & \{1\}               \\ \hline
		\rowcolor{backcolour}(3,1)       & \(C_3\)                          & 18 & (0,0,1)  & 7  & \{1\}               \\ \hline
		(4,1)       & \(C_4\)                          & 14 & (1,0,0)  & 10 & \(C_2\)             \\ \hline
		\rowcolor{backcolour}(6,2)       & \(C_6\)                          & 18 & (1,0,1)  & 8  & \{1\}               \\ \hline
		(6,1)       & \(S_3\)                          & 14 & (1,0,0)  & 10 & \{1\}               \\ \hline
		(8,4)       & \(Q_8\)                          & 17 & (1,0,0)  & 7  & \(C^2_2\)           \\ \hline
		\rowcolor{backcolour}(9,2)       & \(C^2_3\)                        & 18 & (0,0,1)  & 7  & \(C_3\)             \\ \hline
		\rowcolor{backcolour}(12,1)      & \(C_3 \rtimes C_4\)              & 19 & (1,1,0)  & 6  & \(C_2\)             \\ \hline
		(12,2)      & \(C_{12}\)                       & 20 & (1,0,1)  & 6  & \(C_2\)             \\ \hline
		(18,4)      & \(C_3 \rtimes S_3 \simeq A_{3,3}\)              & 16 & (1,0,0)  & 8  & \{1\}               \\ \hline
		\rowcolor{backcolour}(18,3)      & \(C_3 \times S_3\)               & 18 & (1,0,1)  & 8  & \{1\}               \\ \hline
		(24,4)      &  \(C_3 \rtimes Q_8\)                                & 20 & (1,1,0)  & 5  & \(C^2_2\)           \\ \hline
		\rowcolor{backcolour}(27,5)      & \(C^3_3\)                        & 18 & (0,0,1)  & 7  & \(C^2_3\)           \\ \hline
		(27,3)      & \(C_3^2 \rtimes C_3\)                         & 18 & (0,0,1)  & 7  & \(C^2_3\)           \\ \hline
		(36,9)      & \(C^2_3 \rtimes C_4\)            & 18 & (1,0,0)  & 6  & \(C_2\)             \\ \hline
		\rowcolor{backcolour}(54,13)     & \(C_3 \times C_3 \rtimes S_3\)   & 18 & (1,0,1)  & 8  & \{1\}               \\ \hline
		(54,8)      &   \(C_3^2 \rtimes_2 S_3\)                               & 18 & (1,0,1)  & 8  & \{1\}               \\ \hline
		(72,41)     &   \(C_3^2 \rtimes Q_8\)                               & 19 & (1,0,0)  & 5  & \(C^2_2\)           \\ \hline
		(81,12)     &    \(C_3^2 \rtimes C_3^2\)                              & 18 & (0,0,1)  & 7  & \(C^3_3\)           \\ \hline
		\rowcolor{backcolour}(108,37)    & \(C^3_3 \rtimes_2 C_4\)          & 19 & (1,1,0)  & 6  & \(C_2\)             \\ \hline
		(108,36)    & \(C_3 \times C^2_3 \rtimes C_4\) & 20 & (1,0,1)  & 6  & \(C_2\)             \\ \hline
		(108,15)    &    \((C_3^2 \rtimes C_3) \rtimes C_4\)                              & 20 & (1,0,1)  & 6  & \(C_2\)             \\ \hline
		(162,46)    &   \(C_3^3 \rtimes_6 S_3\)                               & 18 & (1,0,1)  & 8  & \{1\}               \\ \hline
		(216,161)   & \(C^3_3 \rtimes_2 Q_8\)          & 20 & (1,1,0)  & 5  & \(C^2_2\)           \\ \hline
		(243,65)    &     \(C_3^3 \rtimes_3 C_3^2\)                             & 18 & (0,0,1)  & 7  &                     \\ \hline
		(486,249)   &    \((C_3^3 \rtimes_3 C_3^2) \rtimes_3 C_2\)                              & 18 & (1,0,1)  & 8  & \{1\}               \\ \hline
		(972,777)   &     \( ((C_3 \times (C_3^2 \rtimes C_3)) \rtimes C_3) \rtimes C_4 \) (\textasteriskcentered)                                               & 20 & (1,0,1)  & 6  & \(C_2\)             \\ \hline
		(972,776)   &  \( ((C_3 \times (C_3^2 \rtimes C_3)) \rtimes C_3) \rtimes C_4 \) (\textasteriskcentered)                                          & 19 & (1,1,0)  & 6  & \(C_2\)             \\ \hline
		(1944,3559) & \((C_3^3 \rtimes_3 C_3^2)\rtimes Q_8\)                & 20 & (1,1,0)  & 5  & \(C^2_2\)           \\ \multicolumn{6}{c}{} \\ 
		\multicolumn{6}{c}{\(G \subseteq A_7\) } \\ 
		%&                                  &    &          &    &                     \\ \hline 
		ID          & \(G\)          & \textrm{rk}   & \(n_2\)  & \(b_2\)& \(\pi_1(Y^\reg)\)                    \\ \hline
		(2,1)       & \(C_2\)                          & 8  & 1        & 16 & \{1\}               \\ \hline
		(4,2)       & \(C^2_2\)                        & 12 & 3        & 14 & \{1\}               \\ \hline
		(4,1)       & \(C_4\)                          & 14 & 1        & 10 & \(C_2\)             \\ \hline
		\rowcolor{backcolour}(6,2)       & \(C_6\)                          & 16 & 1        & 8  & \(C_3\)             \\ \hline
		(6,1)       & \(S_3\)                          & 14 & 1        & 10 & \{1\}               \\ \hline
		(8,3)       & \(D_{8}\)                        & 15 & 3        & 11 & \{1\}               \\ \hline
		(10,1)      & \(D_{10}\)                       & 16 & 1        & 8  & \{1\}               \\ \hline
		(12,3)      & \(A_4\)                          & 16 & 1        & 8  & \(C_3\)             \\ \hline
		(12,5)      & \(C_2 \times C_6\)               & 18 & 3        & 8  & \(C_3\)             \\ \hline
		\rowcolor{backcolour}(12,1)      & \(C_3 \rtimes C_4\)              & 18 & 1        & 6  & \(S_3\)             \\ \hline
		(12,4)      & \(D_{12}\)                       & 16 & 3        & 10 & \{1\}               \\ \hline
		(18,4)      & \(C_3 \rtimes S_3 \simeq A_{3,3}\)              & 16 & 1        & 8  & \{1\}               \\ \hline
		(20,3)      & \(C_5 \rtimes C_4\)                          & 18 & 1        & 6  & \(C_2\)             \\ \hline
		(24,12)     & \(S_4\)                          & 17 & 2        & 8  & \{1\}               \\ \hline
		(24,8)      & \(C_3 \rtimes D_8\)              & 18 & 3        & 8  & \{1\}               \\ \hline
		(36,11)     & \(C_3 \times A_4\)               & 18 & 1        & 6  & \(C^2_3\)           \\ \hline
		(36,9)      & \(C^2_3 \rtimes C_4\)            & 18 & 1        & 6  & \(C_2\)             \\ \hline
		(60,5)      & \(A_5\)                          & 18 & 1        & 6  & \{1\}               \\ \hline
		(72,43)     & \(C_3 \rtimes S_4\)              & 18 & 2        & 7  & \{1\}               \\ \hline
		(120,34)    & \(S_5\)                          & 19 & 2        & 6  & \{1\}               \\ \hline
		(168,42)    &   \(L_2(7)\)                               & 19 & 1        & 5  & \{1\}               \\ \hline
		(360,118)   & \(A_6\)                          & 19 & 1        & 5  & \{1\}               \\ \hline
		2520        & \(A_7\)                          & 20 & 1        & 4  & \{1\}               \\ \multicolumn{6}{c}{} \\ 
		\multicolumn{6}{c}{\(G \subseteq C_3^4 \rtimes A_6\) } \\ 
		%&                                  &    &          &    &                     \\ \hline 
		ID          & \(G\)          & \textrm{rk}   & \((n_2,n_{31},n_{32})\)  & \(b_2\)& \(\pi_1(Y^\reg)\)                    \\ \hline
		(2,1)       & \(C_2\)                          & 8  & (1,0,0)  & 16 & \{1\}               \\ \hline
		\rowcolor{backcolour}(3,1)       & \(C_3\)                          & 18 & (0,0,1)  & 7  & \{1\}               \\ \hline
		(4,2)       & \(C^2_2\)                        & 12 & (3,0,0)  & 14 & \{1\}               \\ \hline
		(4,1)       & \(C_4\)                          & 14 & (1,0,0)  & 10 & \(C_2\)             \\ \hline
		(6,1)       & \(S_3\)                          & 14 & (1,0,0)  & 10 & \{1\}               \\ \hline
		\rowcolor{backcolour}(6,2)       & \(C_6\)                          & 16 & (1,0,0)  & 8  & \(C_3\)             \\ \hline
		\rowcolor{backcolour}(6,2)       & \(C_6\)                          & 18 & (1,0,1)  & 8  & \{1\}               \\ \hline
		(8,3)       & \(D_{8}\)                        & 15 & (3,0,0)  & 11 & \{1\}               \\ \hline
		\rowcolor{backcolour}(9,2)       & \(C^2_3\)                        & 18 & (0,0,1)  & 7  & \(C_3\)             \\ \hline
		\rowcolor{backcolour}(9,2)       & \(C^2_3\)                        & 20 & (0,0,2)  & 7  & \{1\}               \\ \hline
		(9,1)       & \(C_9\)                          & 20 & (0,0,1)  & 5  & \(C_3\)             \\ \hline
		(10,1)      & \(D_{10}\)                       & 16 & (1,0,0)  & 8  & \{1\}               \\ \hline
		(12,5)      & \(C_2 \times C_6\)               & 18 & (3,0,0)  & 8  & \(C_3\)             \\ \hline
		(12,3)      & \(A_4\)                          & 16 & (1,0,0)  & 8  & \(C_3\)             \\ \hline
		(12,4)      & \(D_{12}\)                       & 16 & (3,0,0)  & 10 & \{1\}               \\ \hline
		\rowcolor{backcolour}(12,1)      & \(C_3 \rtimes C_4\)              & 18 & (1,0,0)  & 6  & \(S_3\)             \\ \hline
		\rowcolor{backcolour}(12,1)      & \(C_3 \rtimes C_4\)              & 19 & (1,1,0)  & 6  & \(C_2\)             \\ \hline
		(18,4)      & \(C_3 \rtimes S_3 \simeq A_{3,3}\)              & 16 & (1,0,0)  & 8  & \{1\}               \\ \hline
		\rowcolor{backcolour}(18,3)      & \(C_3 \times S_3\)               & 18 & (1,0,1)  & 8  & \(C_3\)              \\ \hline
		\rowcolor{backcolour}(18,3)      & \(C_3 \times S_3\)               & 18 & (1,0,0)  & 6  & \(C_3\)             \\ \hline
		\rowcolor{backcolour}(18,3)      & \(C_3 \times S_3\)               & 20 & (1,0,1)  & 6  & 
		\(\{1\}\)           \\ \hline
		(18,5)      & \(C_3 \times C_6\)               & 20 & (1,0,2)  & 8  & \{1\}               \\ \hline
		(24,12)     & \(S_4\)                          & 17 & (2,0,0)  & 8  & \{1\}               \\ \hline
		(24,8)      & \(C_3 \rtimes D_8\)              & 18 & (3,0,0)  & 8  & \{1\}               \\ \hline
		\rowcolor{backcolour}(27,5)      & \(C^3_3\)                        & 18 & (0,0,1)  & 7  & \(C^2_3\)           \\ \hline
		\rowcolor{backcolour}(27,5)      & \(C^3_3\)                        & 20 & (0,0,4)  & 11 & \{1\}               \\ \hline
		(27,3)      & \(C_3^2 \rtimes C_3\)                                & 18 & (0,0,1)  & 7  & \(C^2_3\)           \\ \hline
		(27,4)      &   \(C_9 \rtimes C_3\)                               & 20 & (0,0,1)  & 5  & \(C^2_3\)           \\ \hline
		(27,2)      & \(C_3 \times C_9\)               & 20 & (0,0,1)  & 5  & \(C^2_3\)           \\ \hline
		(36,10)     & \(S^2_3\)                        & 18 & (3,0,0)  & 8  & \{1\}               \\ \hline
		(36,12)     & \(S_3 \times C_6\)               & 20 & (3,0,1)  & 8  & \{1\}               \\ \hline
		(36,9)      & \(C^2_3 \rtimes C_4\)            & 18 & (1,0,0)  & 6  & \(C_2\)             \\ \hline
		(36,11)     & \(C_3 \times A_4\)               & 18 & (1,0,0)  & 6  & \(C^2_3\)           \\ \hline
		(36,7)      &   \(C_3^2 \rtimes_3 C_4\)                               & 20 & (1,2,0)  & 6  & \(C_2\)             \\ \hline
		\rowcolor{backcolour}(54,13)     & \(C_3 \times C_3 \rtimes S_3\)   & 20 & (1,0,2)  & 8  & \{1\}               \\ \hline
		\rowcolor{backcolour}(54,13)     & \(C_3 \times C_3 \rtimes S_3\)   & 18 & (1,0,1)  & 8  & \{1\}               \\ \hline
		(54,8)      & \((C_3^2 \rtimes C_3) \rtimes C_2\)             & 18 & (1,0,1)  & 8  & \{1\}               \\ \hline
		(54,12)     & \(S_3 \times C^2_3\)             & 20 & (1,0,3)  & 10 & \{1\}               \\ \hline
		(60,5)      & \(A_5\)                          & 18 & (1,0,0)  & 6  & \{1\}               \\ \hline
		(72,43)     & \(C_3 \rtimes S_4\)              & 18 & (2,0,0)  & 7  & \{1\}               \\ \hline
		(72,22)     & \(D_{12} \rtimes S_3\)           & 20 & (3,1,0)  & 7  & \{1\}               \\ \hline
		(72,40)     & \(S_3 \wr C_2\)                  & 19 & (3,0,0)  & 7  & \{1\}               \\ \hline
		(81,15)     & \(C_3^4\)                        & 20 & (0,0,10) & 23 & \{1\}               \\ \hline
		(81,12)     & \(C_3 \times (C_3^2 \rtimes C_3)\)              & 18 & (0,0,1)  & 7  & \(C^3_3\)           \\ \hline
		(81,13)     &  \(C_9 \rtimes C_3^2\)                                & 20 & (0,0,1)  & 5  & \(C^3_3\)           \\ \hline
		(81,7)      & \(C_3 \wr C_3\)                  & 20 & (0,0,2)  & 7  & \(C_3\)             \\ \hline
		(81,8)      &    \((C_3 \times C_9) \rtimes_2 C_3\)                              & 20 & (0,0,1)  & 5  & \(C_3^2 \rtimes C_3\)            \\ \hline
		(108,40)    &   \(C_3^3 \rtimes_3 C_2^2\)                               & 20 & (3,0,1)  & 8  & \{1\}               \\ \hline
		(108,38)    & \(C_3 \times S^2_3\)             & 20 & (3,0,2)  & 10 & \{1\}               \\ \hline
		\rowcolor{backcolour}(108,37)    & \(C^3_3 \rtimes C_4\)            & 20 & (1,0,1)  & 6  & \(C_2\)             \\ \hline
		\rowcolor{backcolour}(108,37)    & \(C^3_3 \rtimes C_4\)            & 20 & (1,1,0)  & 5  & \(C_2\)             \\ \hline
		\rowcolor{backcolour}(108,37)    & \(C^3_3 \rtimes C_4\)            & 19 & (1,1,0)  & 6  & \(C_2\)             \\ \hline
		(162,52)    & \(C^2_3 \times C_3 \rtimes S_3\) & 20 & (1,0,6)  & 16 & \{1\}               \\ \hline
		(162,46)    & \((C_3^2 \rtimes C_3) \rtimes_5 S_3\)           & 18 & (1,0,1)  & 8  & \{1\}               \\ \hline
		(162,10)    & \(C_3 \wr S_3\)                  & 20 & (1,0,2)  & 8  & \{1\}               \\ \hline
		(216,158)   & \(C_3^3 \rtimes_2 D_8\)          & 20 & (3,0,1)  & 8  & \{1\}               \\ \hline
		(243,65)    &   \(C_3^3 \rtimes_3 C_3^2\)                               & 18 & (0,0,1)  & 7  & \(C_3^4\)           \\ \hline
		(243,51)    & \(C_3 \times C_3 \wr C_3\)       & 20 & (0,0,4)  & 11 & \(C_3\)             \\ \hline
		(243,57)    & \((C_3\times C_9)\rtimes_2 C_3^2,\)                                 & 20 & (0,0,1)  & 5  & \(C_3 \times (C_3^2 \rtimes C_3)\) \\ \hline
		(324,167)   &    \(C_3^2 \times C_6^2\)                              & 20 & (3,0,4)  & 14 & \{1\}               \\ \hline
		(324,160)   & \(C_3^3 \rtimes A_4\)            & 20 & (1,0,1)  & 6  & \(C_3\)             \\ \hline
		(324,163)   & \(C^4_3 \rtimes_3 C_4\)          & 20 & (1,2,2)  & 10 & \(C_2\)             \\ \hline
		(360,118)   & \(A_6\)                          & 19 & (1,0,0)  & 5  & \{1\}               \\ \hline
		(405,15)    & \(C^4_3 \rtimes C_5\)            & 20 & (0,0,2)  & 7  & \(C_5\)             \\ \hline
		(486,249)   &    \((C_3^3 \rtimes_3 C_3^2) \rtimes_3 C_2 \simeq 3_+^{1+4}\rtimes_3 C_2 \)                                 & 18 & (1,0,1)  & 8  & \{1\}               \\ \hline
		(486,166)   & \(C^4_3 \rtimes_5 S_3\)          & 20 & (1,0,3)  & 10 & \{1\}               \\ \hline
		(648,704)   &   \( ((C_3^3 \rtimes C_2^2) \rtimes C_3) \rtimes C_2 \) (\textasteriskcentered)                              & 20 & (2,0,1)  & 7  & \{1\}               \\ \hline
		(648,722)   &    \(C_3^4 \rtimes D_8\)                              & 20 & (3,1,4)  & 11 & \{1\}               \\ \hline
		(729,312)   &   \( ((C_3 \times (C_9 \rtimes C_3)) \rtimes C_3) \rtimes C_3 \) (\textasteriskcentered)                                    & 20 & (0,0,2)  & 7  & \(C^2_3\)           \\ \hline
		(810,101)   &    \(C_3^4 \rtimes D_{10}\)                                 & 20 & (1,0,2)  & 8  & \{1\}               \\ \hline
		(972,877)   &    \( C_3 \times ((C_3^3 \rtimes C_2^2) \rtimes C_3) \) (\textasteriskcentered)                                                  & 20 & (1,0,2)  & 8  & \(C_3\)             \\ \hline
		(972,776)   &    \( ((C_3 \times (C_3^2 \rtimes C_3)) \rtimes C_3) \rtimes C_4 \) (\textasteriskcentered)                                             & 19 & (1,1,0)  & 6  & \(C_2\)             \\ \hline
		(1458,1229) &    \( ((C_3 \rtimes (C_3^3 \rtimes C_3)) \rtimes C_3) \rtimes C_2 \) (\textasteriskcentered)                                             & 20 & (1,0,2)  & 8  & \{1\}               \\ \hline
		(1944,3877) &    \( (C_3 \rtimes ((C_3^3 \rtimes C_2^2 ) \rtimes C_3)) \rtimes C_2 \) (\textasteriskcentered)                                             & 20 & (2,1,1)  & 8  & \{1\}               \\ \hline
		2916        &   (\textasteriskcentered)                               & 20 & (1,2,0)  & 6  & \(C_2\)             \\ \hline
		4860        &   (\textasteriskcentered)                               & 20 & (1,0,1)  & 6  & \{1\}               \\ \hline
		29160       & \(C_3^4 \rtimes A_6\)            & 20 & (1,1,0)  & 5  & \{1\}    			  \\
		\multicolumn{6}{c}{} \\ 
		\multicolumn{6}{c}{\(G \subseteq Q_8 \rtimes S_3\) } \\ 
		%&                                  &    &          &    &                     \\ \hline 
		ID          & \(G\)          & \textrm{rk}   & \(n_2\)  & \(b_2\)& \(\pi_1(Y^\reg)\)           \\ \hline
		(2, 1)      & \(C_2\)        & 8 &1 &16 & \{1\}               \\ \hline
		(4, 1)      & \(C_4\)        & 14 & 1 &10 & \(C_2\)             \\ \hline
		(4, 2)      & \(C^2_2\)      & 12 & 3 &14 & \{1\}               \\ \hline
		\rowcolor{backcolour}
		(6, 2)      & \(C_6\)        & 16 & 1 &8 & \(C_3\)             \\ \hline
		(6, 1)      & \(S_3\)        & 14 & 1 &10 & \{1\}               \\ \hline
		(8, 4)      & \(Q_8\)        & 17 & 1 &7 & \(C^2_2\)           \\ \hline
		(8, 3)      & \(D_8\)        & 15 & 3 &11 & \{1\}               \\ \hline
		(8, 1)      & \(C_8\)        & 18 & 1 &6 & \(C_4\)             \\ \hline
		(12, 4)     & \(D_{12}\)     & 16 & 3 &10 & \{1\}               \\ \hline
		(16, 8)     & \(Q_8\rtimes C_2\) & 18 & 2 &7 & \(C_2\)             \\ \hline
		(24, 3)     & \(Q_8 \rtimes C_3\) & 19 & 1 &5 & \(A_4\)             \\ \hline
		(48, 29)    & \(Q_8 \rtimes S_3\)     & 19 & 1 &6 & \{1\}               \\      
\end{longtable}
\end{center}

\subsection{Rank of invariant lattice} \label{subsec:rk}

The computation of \(\mathrm{rk} = 23 - \mathrm{rk}\,H^2(F(X))^G\) in \cref{table:term} is quite delicate. The starting point is \cite[Table 12]{HoehnMason2019}, where H{\"o}hn and Mason determine the rank of the coinvariant lattice \(H^2(X', \mathbb{Z})_{G'}\) for any finite group \(G'\) of symplectic automorphisms of hyperkähler manifolds \(X'\) of type $\textrm{K}3^{[2]}$. We recall that \(H^2(X', \mathbb{Z})_{G'}\) denotes the orthogonal complement of the invariant lattice \(H^2(X', \mathbb{Z})^{G'}\) in \(H^2(X', \mathbb{Z})\).

The groups analyzed in \cref{table:term} are all included in H{\"o}hn and Mason's list. However, the same abstract group \(G\) can appear multiple times in \cite[Table 12]{HoehnMason2019}, with different ranks for the coinvariant lattice \(H^2(X', \mathbb{Z})_G\). This variation arises because the rank depends not only on the isomorphism type of \(G\) but also on the specific action of \(G\). Notably, this occurs when \(G \simeq (3,1), (6,2), (9,2), (12,1), (18,3), (27,5), (54,13)\), or \((108,37)\); such groups are highlighted in grey in \cref{table:term}.

In order to determine the correct rank, we apply the following strategies.
\begin{enumerate} \setlength\itemsep{1em}
	\item \label{item:rkmethod_HM} For any group $G'$, \cite{HoehnMason2019} also provides the overgroups of symplectic automorphisms that contain $G'$. Therefore, the rank of $H^2(F(X), \Z)_{G}$ may be determined by the condition $G \subseteq \Aut^s(X)$. This method applies to
	\begin{itemize}
		\item $(108,37)$, and $(12,1)$ of $\Aut^s(X)\simeq(C_3^3 \rtimes_3 C_3^2)\rtimes Q_8$,
		\item $(9,2) \subset \Aut^s(X)\simeq(C_3^3 \rtimes_3 C_3^2)\rtimes Q_8$ to exclude that $\rk\,H^2(F(X), \Z)_{G}$ is $20$.
	\end{itemize}
	
	\item \label{item:rkmethod_C3} If $\Aut^s(X)\simeq(C_3^3 \rtimes_3 C_3^2)\rtimes Q_8$ or $C_3^4\rtimes A_6$, and $G\simeq \langle \diag(\om,\om,\om,1,1,1) \rangle$, a terminalization $Y$ is studied in detail in \cite[\S 3]{Kawatani2009}. In particular, $Y$ is birational to a generalised Kummer fourfolds, via a birational map which is an isomorphism in codimension 1 (see \cite[\S 3.1]{Kawatani2009}). It follows that $b_2(Y)=7$, hence $\rk\,H^2(F(X), \Z)_{G} =18$.
	
	\item \label{item:rkmethod_subgp} If $G \subseteq H$ then we have $\rk\,H^2(F(X), \Z)_{G} \leq \rk\,H^2(F(X), \Z)_{H}$. This method applies, for instance, to the subgroups 
	\begin{itemize}
		\item $(6,2) \subset (12,4)$ of $\Aut^s(X)\simeq A_7,L_2(11),A_{3,5},Q_8 \rtimes S_3$,
		\item $(12,1) \subset (24,8)$, and $(18,3) \subset (36,10)$ of $\Aut^s(X)\simeq A_{3,5}$,
		\item $(12,1) \subset (24,8)$ of $\Aut^s(X)\simeq A_{7}$,
		\item $(27,5) \subset (54,13) \subset (108,37)$, and  $(18,3) \subset (54,13)$ of $\Aut^s(X)\simeq (C_3^3 \rtimes_3 C_3^2)\rtimes Q_8$,
	\end{itemize}
	and to the overgroups
	\begin{itemize}
		\item $(3,1) \subset (6,2)$ of $\Aut^s(X)\simeq(C_3^3 \rtimes_3 C_3^2)\rtimes Q_8, C_3^4\rtimes A_6$ with $n_{32}=1$, applying also \eqref{item:rkmethod_C3},
		\item  $(3,1) \subset (9,2)$ of $\Aut^s(X)\simeq(C_3^3 \rtimes_3 C_3^2)\rtimes Q_8$, applying also \eqref{item:rkmethod_HM} and \eqref{item:rkmethod_C3}.
	\end{itemize} 
	
	\item For a smooth cubic fourfold $X \subset \PP^5$ with defining equation $f$, let $h$ be the restriction of an hyperplane class in $\PP^5$, and $\Omega = \sum_{i=0}^5 (-1)^i x_i dx_0 \wedge \ldots \wedge \widehat{dx_i} \wedge \ldots \wedge dx_n$. We recall that we have the following isomorphisms:
	\begin{align*}
		& H^2(F(X),\Z) \simeq H^4(X, \Z),\\
		& H^4(X, \C) \simeq H^{3,1}(X,\C) \oplus h^2 \oplus H^{2,2}(X,\C)_{\mathrm{prim}}  \oplus H^{1,3}(X,\C),\\
		& H^{2,0}(F(X),\C) \simeq H^{3,1}(X,\C) = \langle \mathrm{Res}\,\tfrac{\Omega}{f^2} \rangle.
	\end{align*}
	Moreover, $H^{3,1}(X,\C)$, $H^{1,3}(X,\C)$ and $h^2$ are $G$-invariant as $G$ acts symplectically and preserves $h$. In order to compute \(\rk \, H^2(F(X), \mathbb{Z})^{G}\), we reduce to study the $G$-invariant classes in $H^{2,2}(X,\C)_{\mathrm{prim}}$.
	
	\item[(4.1)] Lemma \ref{lem:rkFermat} below yields a bound on the rank of the coinvariant lattice from the study of $G$-invariant classes in $H^{2,2}(X,\C)_{\mathrm{prim}}$. Such bound completely determines $\rk \, H^2(F(X), \Z)_G$ for the following subgroups $G$ of $\Aut^s(X)\simeq C_3^4\rtimes A_6$
	\begin{itemize}
		\item $(9,2)$ with $n_{32}=2$,
		\item $(18,3)$ with $n_{32}=1$ and $\pi_1(Y^\reg)=\{1\}$, as for such group $h_3=2$,
		\item $(27,5)$ with $n_{32}=4$,
		\item $(54,13)$ with $n_{32}=2$,
		\item $(108,37)$ with $n_{32}=1$, as for such group $h_3=4$.
	\end{itemize}
	\item[(4.2)] For the following remaining subgroups $G$ of $\Aut^s(X)\simeq C_3^4\rtimes A_6$
	\begin{itemize}
		\item $(6,2)$ with $n_{32}=0$,
		\item $(9,2)$ with $n_{32}=1$,
		\item $(12,1)$,
		\item $(18,3)$ with $\pi_1(Y^\reg)=C_3$,
		\item $(27,5)$ with $n_{32}=1$,
		\item $(54,13)$ with $n_{32}=1$,
		\item $(108,37)$ with $n_{32}=0$,
	\end{itemize}
	we write explicit generators for $G$ and determine \(H^2(F(X), \mathbb{Z})^{G}\). By the proof of Lemma \ref{lem:rkFermat}, this boils down to the study of the $G$-invariant polynomials in $\langle x_ix_jx_k \,| i \neq j \neq k \rangle$. 
\end{enumerate}

\begin{lemma} \label{lem:rkFermat}
	Let $G \subseteq \Aut^s(X)\simeq C_3^4\rtimes A_6$ and $X=V(f)$ with $f=x_0^3+x_1^3+x_2^3+x_3^3+x_4^3+x_5^3$. Let $N_3$ be the number of subgroups of $G$ generated by an element of order $3$ which fixes a locus of codimension 2 in $F(X)$. 
	\begin{itemize}
		\item If $N_3 \geq 1$, then $\rk \, H^2(F(X), \Z)_G \geq 18$;
		\item if $N_3 \geq 2$, then $\rk \, H^2(F(X), \Z)_G= 20$.
	\end{itemize}
\end{lemma}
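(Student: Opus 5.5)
We compute everything on $H^4(X,\C)$, using $H^2(F(X),\Z)\simeq H^4(X,\Z)$ and the Hodge decomposition recalled in item (4) above. The summands $H^{3,1}(X)$, $H^{1,3}(X)$ and $\C h^2$ are $G$-invariant, so
$$\rk\, H^2(F(X),\Z)_G = 20-\dim \big(H^{2,2}(X,\C)_{\mathrm{prim}}\big)^G .$$
In particular the coinvariant rank is always at most $20$. It therefore suffices to show:
\begin{itemize}
\item[(a)] a single element of the relevant type leaves at most a $2$-dimensional invariant subspace of $H^{2,2}(X,\C)_{\mathrm{prim}}$;
\item[(b)] two distinct such subgroups leave no common invariant vector.
\end{itemize}

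To make the action explicit, I will use Griffiths' residue description for the Fermat cubic. We have $H^{2,2}(X)_{\mathrm{prim}}\simeq R(f)_3$, the degree-$3$ part of the Jacobian ring $\C[x_0,\dots,x_5]/(x_0^2,\dots,x_5^2)$, via $A\mapsto \mathrm{Res}\,\tfrac{A\,\Omega}{f^3}$. This space has basis the $20$ squarefree monomials $x_ix_jx_k$ with $i,j,k$ distinct. A linear lift $\tilde g\in \mathrm{GL}_6$ of $g\in G$ acts on $\Omega$ by $\det\tilde g$ and on $f$ by a scalar $\chi(\tilde g)$. The symplectic condition says that $\mathrm{Res}\,\tfrac{\Omega}{f^2}$ is invariant, i.e. $\det\tilde g=\chi(\tilde g)^2$. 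Hence $\tilde g$ acts on $\mathrm{Res}\,\tfrac{A\Omega}{f^3}$ through its action on $A$ twisted by $\chi(\tilde g)^{-1}$, and this is independent of the chosen lift. For diagonal elements the basis monomials are eigenvectors, so invariants can be read off monomial by monomial.

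By the proof of the Lemma on $N_3$, every order-$3$ element of $C_3^4\rtimes A_6$ fixing a codimension-$2$ locus is diagonal, of the form $g_S=\diag(\om^{a_0},\dots,\om^{a_5})$ with $a_i=1$ for $i\in S$ and $a_i=0$ otherwise, where $|S|=3$. Moreover $\langle g_S\rangle=\langle g_{S^c}\rangle$, since $g_S^{-1}$ equals $g_{S^c}$ up to the scalar $\om$. For this lift $f$ is invariant and $\det=1$, so no twist occurs. The monomial $x_ix_jx_k$ is then fixed if and only if $|\{i,j,k\}\cap S|\in\{0,3\}$, that is, $\{i,j,k\}\in\{S,S^c\}$. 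Thus exactly $2$ of the $20$ basis vectors are invariant under $\langle g_S\rangle$, giving $\rk H^2(F(X),\Z)_{\langle g_S\rangle}=18$. Since $\langle g_S\rangle\subseteq G$ implies $H^{2,2}_{\mathrm{prim}}{}^G\subseteq H^{2,2}_{\mathrm{prim}}{}^{\langle g_S\rangle}$, this proves (a), i.e. the first claim.

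For (b), suppose $G$ contains two distinct such subgroups $\langle g_S\rangle\neq\langle g_{S'}\rangle$. Then $\{S,S^c\}\neq\{S',S'^c\}$, and these two unordered pairs of $3$-subsets are disjoint. Both subgroups act diagonally in the same monomial basis, so a common invariant vector must be a combination of monomials indexed by $\{S,S^c\}\cap\{S',S'^c\}=\emptyset$. Hence the invariants vanish and the coinvariant rank is $20$, which is the second claim.

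The step I expect to require the most care is the bookkeeping in the residue isomorphism. I need to check that the correct twist is $\chi^{-1}$ (equivalently, that $\Omega/f^3$ transforms trivially for the chosen lift), and that the group acts on forms contragrediently rather than directly. Getting this wrong could replace the condition $|\{i,j,k\}\cap S|\in\{0,3\}$ by a shifted congruence. I would verify it directly against the known value $18$ for $\langle g_S\rangle$ obtained from Kawatani's description in item (2).
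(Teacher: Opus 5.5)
Your proof is correct and follows the paper's argument. Both pass through Griffiths residues to $G$-invariants in the span of the $20$ squarefree cubic monomials, and both use two facts: $\langle g_S\rangle$ fixes exactly the two monomials indexed by $S$ and $S^c$, and two distinct such subgroups have no invariant monomial in common.

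Your extra bookkeeping only makes explicit what the paper leaves implicit: you use the correct exponent $f^3$ (the paper's $f^2$ is a slip), and the twist is trivial because your lift of $g_S$ has $\det=1$ and fixes $f$. The worry you flag is therefore moot, since replacing the action by its contragredient does not change the invariant subspace.
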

\begin{proof}
A basis for $H^{2,2}(X,\C)_{\mathrm{prim}}$ consists of $$\mathrm{Res}\,\tfrac{x_ix_jx_k\,\Omega}{f^2}$$ with $i,j$, and $k$ distinct (see for instance \cite[\S 6.1.3]{Voisina}). We reduce to study the subspace of $G$-invariant polynomials in the $\C$-vector space $W \coloneqq \langle x_ix_jx_k \,| i \neq j \neq k \rangle$.
	
Assume $N_3 \geq 1$. Up to relabelling the coordinates $x_i$ and by Proposition \ref{prop:fixed loci codim2}, we can assume that $g \coloneqq \diag(1,1,1,\om,\om,\om) \in G$. The $\langle g\rangle$-invariant polynomials in $W$ are generated by $x_0x_1x_2$ and $x_3x_4x_5$, and no other subgroup of $\Aut^s(X)$ of order $3$ and fixing a locus of codimension 2 in $F(X)$ fixes a nonzero element of $W^{\langle g \rangle}$. It follows that
$$
		\rk \, H^2(F(X), \Z)_G 
		= 23 - \rk \, H^2(F(X), \Z)^G 
		= 23 - 3 - \rk\, W^G 
		\,\begin{cases}
			\geq 20 - \rk\,W^{\langle g \rangle} = 18 & \text{if }N_3=1 \\
			= 20 & \text{if }N_3 \geq 2.
		\end{cases}
$$
\end{proof}

\begin{rem}
	Note that in \cref{table:term} identical strings $[G \subset \Aut^s(X),n_2,n_{31}, n_{32},\pi_1]$ but with different $\rk$, hence different $b_2$, witness the existence of isomorphic non-conjugate subgroups $G_i \simeq G$ of $\Aut^s(X)$ which differ in $\rk \,H^2(F(X), \mathbb{Z})^{G_i}$. For instance, this occurs when $G=(108,37) \subset \Aut^s(X)\simeq C_3^4\rtimes A_6$ and 
	\begin{align*}
	G_1 & = \langle \diag(1,1,1,\om,\om,\om), \,
	\diag(1,1,1,1,\om,\om^2)(012), \,
	\diag(1,\om^2,\om,1,1,1)(345),\\ 
	& \quad \quad (03)(1425) \rangle, 
	\\
	G_2 & = \langle \diag(1,1,1,\om,\om,\om),\, 
	\diag(1,1,1,1,\om,\om^2)(012), \,
	\diag(1,\om^2,\om,1,\om,\om^2)(345), \\ 
	& \quad \quad \diag(1,1,\om,1,1,\om^2)(03)(1425) \rangle.
	\end{align*} 
	Indeed, we have $W^{G_1}= \langle x_0x_1x_2 + x_3x_4x_5 \rangle$ hence $\rk \,H^2(F(X), \mathbb{Z})_{G_1}=19$, while $W^{G_2}= \{0\}$ hence $\rk \,H^2(F(X), \mathbb{Z})_{G_2}=20$.
\end{rem}

\subsection{New deformation classes of four-dimensional IHS varieties} 
\begin{theorem} \label{thm:b2 new} 
Let $X$ be a smooth cubic fourfold whose symplectic automorphism group $\Aut^s(X)$ is isomorphic to $L_2(11)$ or $ A_7$. Let $Y$ be a projective terminalization of $F(X)/Aut^s(X)$. Then $Y$ is an IHS variety with $b_2(Y)=4$ and simply connected regular locus, which is not deformation equivalent to any Fujiki fourfold appearing in \cite{Menet2022}, or to any terminalization of finite symplectic quotients of Hilbert squares of $K3$ surfaces, or of Kummer fourfolds by groups of induced symplectic automorphisms appearing in \cite{BertiniGrossiMauriMazzon2024}.
\end{theorem}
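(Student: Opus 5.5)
The argument has three parts: establish the invariants from the results already proved, then rule out each of the listed deformation classes by a numerical invariant that is constant in families.

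\textbf{Step 1: IHS property.} By the recollection before Theorem \ref{thm:b2}, a projective terminalization $Y$ of $F(X)/G$ exists, and it is an IHS variety by \cite[Proposition 12]{Schwald2020}, since $F(X)/G$ is IHS by \cite[Proposition 3.16]{BertiniGrossiMauriMazzon2024}.

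\textbf{Step 2: the invariants $b_2$ and $\pi_1$.} For $G=\Aut^s(X)\simeq L_2(11)$ or $A_7$, part \eqref{item:N3=0} of the lemma on $N_3$ gives $N_3=0$, so $n_{31}=n_{32}=0$. Both groups have a single conjugacy class of involutions, so $n_2=1$ by Proposition \ref{prop:b2_grouptheoretic}. By \cite[Table 12]{HoehnMason2019}, both groups have coinvariant lattice of rank $20$, hence $\rk(L^G)=3$. Theorem \ref{thm:b2} then gives
\[
b_2(Y)=3+1=4 .
\]
For the regular locus, $\pi_1(Y^{\reg})\simeq G/N$ by \cite[Proposition 8.1]{BertiniGrossiMauriMazzon2024}, where $N$ is generated by the involutions. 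Since $G$ is simple and $N$ is a nontrivial normal subgroup, $N=G$, so $\pi_1(Y^{\reg})$ is trivial.

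\textbf{Step 3: ruling out the known classes.} Among all the varieties in \cite{Menet2022} and \cite{BertiniGrossiMauriMazzon2024}, I only need to compare with those having $b_2=4$ and simply connected regular locus, since both invariants are preserved under deformation. By inspection of the tables there, I expect this to leave only Menet's Fujiki fourfold with $b_2=4$, together possibly with a few quotients of $S^{[2]}$. To separate $Y$ from these, I would use a finer deformation invariant: the Fujiki constant $c_Y$, or the Beauville--Bogomolov lattice $H^2(Y,\Z)$ with its form and discriminant. For quotients this can be computed as follows.
\begin{itemize}
\item Pulling back along $q$ multiplies the Fujiki relation by $|G|$, so $c_{F(X)/G}$ is determined by $c_{F(X)}=3$ and the lattice $L^G$ with the form rescaled by $|G|$.
\item Each exceptional divisor $E$ over the $A_1$ component contributes through $q_Y(E)$, which depends on the class of the fixed K3 surface.
\end{itemize}
The invariant I would actually compare is the numerical one used in \cite{BertiniGrossiMauriMazzon2024}, namely the pair consisting of the Fujiki constant and the discriminant group of $H^2(Y,\Z)$. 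These involve $|G|=660$ or $2520$ and the discriminant of $L^G$, and primes such as $11$ and $7$ are not expected in any entry of the earlier lists.

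\textbf{Main obstacle.} The hard part is Step 3. Computing the discriminant and the Fujiki constant of $Y$ requires the discriminant of the rank-$3$ invariant lattice $H^4(X,\Z)^G$, which can be extracted from \cite{HoehnMason2019} or \cite{LazaZheng2022}, together with the self-intersection of the single exceptional divisor. Once these numbers are in hand, the comparison with the earlier tables is a finite check.
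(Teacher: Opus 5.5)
Your Steps 1 and 2 are correct, and more explicit than the paper, which reads $b_2(Y)=4$ and $\pi_1(Y^{\reg})=\{1\}$ off Table~\ref{table:term}. The gap is Step 3, which is not yet an argument. You say you \emph{expect} the comparison list to consist of Menet's fourfold and ``possibly'' some $S^{[2]}$-quotients. You then propose to compute the exact Fujiki constant and the discriminant group of $H^2(Y,\Z)$. That would require the integral structure of $H^2(Y,\Z)$: how $p^*H^2(F(X)/G,\Z)$ glues with the exceptional divisor class, and the value $q_Y(E)$. You determine none of this. The remark that the primes $7$ and $11$ ``are not expected'' in the earlier lists is a heuristic, not a proof.

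The missing idea is that you only need the Fujiki constant up to rational squares, and your own first bullet already gives this. For $\alpha\in H^2(F(X)/G,\Q)$,
\[
c_Y\, q_Y(p^*\alpha)^2=\int_Y (p^*\alpha)^4=\frac{1}{|G|}\int_{F(X)}(q^*\alpha)^4=\frac{3}{|G|}\,q_{F(X)}(q^*\alpha)^2 .
\]
Hence the two rational quadratic forms are proportional with a rational factor. It follows that $c_Y\in \frac{3}{|G|}(\Q^*)^2$, with no need for any discriminant or for $q_Y(E)$.

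This is the numerical criterion the paper takes from \cite[Propositions 3.19 and 3.21]{Menet2022}. If two such varieties with groups $G$ and $H$ are deformation equivalent, then $(|G|/|H|)^{1/2}\in\Q$; against Kummer quotients the condition is $(|G|/(3|H|))^{1/2}\in\Q$. With this criterion, Step 3 becomes immediate:
\begin{itemize}
\item No variety in \cite{BertiniGrossiMauriMazzon2024} has $b_2=4$, so $b_2$ alone excludes all of them. You do not need the hedge about ``a few quotients of $S^{[2]}$''.
\item The only Fujiki fourfold with $b_2=4$ is $S(A_4^2)^{[2]}_\theta$, with $|A_4^2|=144$. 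Since $660/144=55/12$ and $2520/144=35/2$ are not rational squares, it is not deformation equivalent to $Y$.
\end{itemize}
The same test, with $2520/660=42/11$, also separates the $L_2(11)$ and $A_7$ cases from each other. The paper needs this to obtain two distinct new classes.
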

\begin{proof}
We recall that if two IHS varieties $Y$ and $Y'$ are deformation equivalent, then they have same second Betti number and same Fujiki constant. 

Let $G_1=L_2(11)$ and $G_2=A_7$. Let $X_i$ be a smooth cubic fourfold such that $\Aut^s(X_i)=G_i$, and $Y_i$ a projective terminalization of $X_i/G_i$, for $i=1,2$. None of the IHS fourfolds constructed in \cite{BertiniGrossiMauriMazzon2024} has second Betti number equal to $b_2(Y_i)=4$, hence none of them is deformation equivalent to $Y_i$, for $i=1,2$.

Among the Fujiki fourfolds in \cite{Menet2022}, let $Y_3\coloneqq S(G_3)_\theta^{[2]}$ be the one with $b_2(Y_3)=4$, corresponding to $G_3=A_4^2$. 
\cite[Propositions 3.19 and 3.21]{Menet2022} imply that if $Y_i$ and $Y_j$ are deformation equivalent, then the number
	\[
	\left( \frac{|G_i|}{|G_j|}\right)^{\tfrac{1}{2}}
	\]
is rational, for $i$ and $j$ in $\{1,2,3\}$. As for any $i \neq j$, this number is not rational, we conclude that $Y_1$, $Y_2$ and $Y_3$ are not deformation equivalent.
\end{proof}
Applying the same numerical criterion in the proof to the other terminalizations in Table \ref{table:term} does not allow to determine whether they define new deformation classes; see Appendix \ref{app:obstruction}.

\appendix
\section{Maximal symplectic automorphism groups} \label{appendix}
In the notation of Section \ref{sect:symplectic auto gps}, let $G$ denote a maximal symplectic automorphism group among those listed in \eqref{eq:maximalG}. By \cite[Theorems 1.2 and 1.8]{LazaZheng2022} (see also \cite[Theorem 6.12]{YangYuZhu} and \cite[Table 11]{HoehnMason2019}),
	\begin{enumerate}
		\item if $G\in \{C_3^4\rtimes A_6,(C_3^3 \rtimes_3 C_3^2)\rtimes Q_8, L_2(11),A_{3,5}\}$, then $\dim(\mathcal{M}_G)=0$ and $\mathcal{M}_G$ is irreducible;
		\item if $G\in \{A_7, M_{10}\}$, then $\dim(\mathcal{M}_G)=0$ and $\mathcal{M}_G$ has two components;
		\item if $G \simeq Q_8\rtimes S_3$, then $\dim(\mathcal{M}_G)=1$ and $\mathcal{M}_G$ is irreducible.
	\end{enumerate}
In what follows, we recall the equation $f$ of a smooth cubic fourfold $V(f)$ in each irreducible component of $\mathcal{M}_G$, and the explicit action of $G$ on $V(f)$ as subgroup of $\PGL_6$.

\begin{enumerate} \setlength\itemsep{1em}
	\item $G \simeq C_3^4\rtimes A_6$
	\begin{itemize}
		\item[-] $f=x_0^3+x_1^3+x_2^3+x_3^3+x_4^3+x_5^3$
		\item[-] action: any $\sigma \in A_6$ and $(i_1,i_2,i_3,i_4) \in C_3^4$ act by
		\begin{align*}
			\quad \quad \quad & [x_0:x_1:x_2:x_3:x_4:x_5] \mapsto \\ 
			& \quad [x_{\sigma^{-1}(0)}:\omega^{i_1}x_{\sigma^{-1}(1)}:\omega^{i_2}x_{\sigma^{-1}(2)}:\omega^{i_3}x_{\sigma^{-1}(3)}:\omega^{i_4}x_{\sigma^{-1}(4)}:\omega^{-i_1-i_2-i_3-i_4}x_{\sigma^{-1}(5)}]
		\end{align*}
		\item[-] references: \cite[Example 4.5]{Fu2015}.
		%, \cite[Second example]{Kawatani2009}.
	\end{itemize}
	
	\item $G \simeq A_7$
	\begin{itemize} 
		\item For $X^1(A_7)=V(f_1)$:
		\begin{itemize}
			\item[-] $f_1=x_0^3+x_1^3+x_2^3+x_3^3+x_4^3+x_5^3+ (-x_0-x_1-x_2-x_3-x_4-x_5)^3$
			\item[-] action: even permutations on $\{x_0,x_1,x_2,x_3,x_4,x_5, -x_0-x_1-x_2-x_3-x_4-x_5\}$
			\item[-] references: \cite[Example 4.6.2]{Mongardi2013}, \cite[Table 11]{HoehnMason2019}
		\end{itemize}
		 
		\item For $X^2(A_7)=V(f_2)$:
		\begin{itemize}
			\item[-] $f_2=x_0^3+x_1^3+x_2^3+ \tfrac{12}{5}x_0x_1x_2 + x_0x_3^2 + x_1x_4^2 + x_2x_5^2 + \tfrac{4 \sqrt{15}}{9}x_3x_4x_5$
			\item[-] action generated by
			$m_1 \coloneqq \diag(1, \om, \om^2, -1, \om, -\om^2)$ and  
			$$
			m_2 \coloneqq\begin{psmallmatrix}
				\tfrac{1}{2} & \tfrac{1}{2} & \tfrac{1}{2} & \tfrac{1}{6}\sqrt{\tfrac{5}{3}} & \tfrac{1}{6}\sqrt{\tfrac{5}{3}}&  \tfrac{1}{6}\sqrt{\tfrac{5}{3}} \\
				\tfrac{1}{2} & \tfrac{\om}{2} & \tfrac{-\xi_6}{2} & \tfrac{1}{6}\sqrt{\tfrac{5}{3}} &  \tfrac{\om}{6} \sqrt{\tfrac{5}{3}} & \tfrac{-\sqrt{5}}{36}(3i + \sqrt{3}) \\
				\tfrac{1}{2} & \tfrac{-\xi_6 }{2}& \tfrac{\om}{2} & \tfrac{1}{6}\sqrt{\tfrac{5}{3}} & \tfrac{-\sqrt{5}}{36}(3i + \sqrt{3}) &    \tfrac{\om}{6} \sqrt{\tfrac{5}{3}} \\
				\tfrac{1}{2}\sqrt{\tfrac{3}{5}} & \tfrac{1}{2}\sqrt{\tfrac{3}{5}}  &  \tfrac{1}{2}\sqrt{\tfrac{3}{5}} & \tfrac{-1}{2} & \tfrac{-1}{2} & \tfrac{-1}{2} \\
				\tfrac{1}{2}\sqrt{\tfrac{3}{5}}  & \tfrac{\om}{2} \sqrt{\tfrac{3}{5}} & \tfrac{-\xi_6}{2}\sqrt{\tfrac{3}{5}} & \tfrac{-1}{2} & \tfrac{-\om}{2} & \tfrac{\xi_6}{2} \\
				\tfrac{1}{2}\sqrt{\tfrac{3}{5}}  & \tfrac{-\xi_6}{2}\sqrt{\tfrac{3}{5}} & \tfrac{\om}{2}\sqrt{\tfrac{3}{5}} & \tfrac{-1}{2} & \tfrac{\xi_6}{2} & \tfrac{-\om}{2} 
			\end{psmallmatrix}
			$$
	
			\item references: \cite[Theorem 6.14]{YangYuZhu}
			
		\end{itemize}
	\end{itemize}
	
	\item $G \simeq (C_3^3 \rtimes_3 C_3^2)\rtimes Q_8$, IdGroup $(1944,3559)$
	\begin{itemize}
		\item[-] $f=x_0^3+x_1^3+x_2^3+x_3^3+x_4^3+x_5^3+3(i-2e^{\tfrac{\pi i}{6}}-1)(x_0x_1x_2+x_3x_4x_5)$
		\item[-] action generated by
			$$ p_1 \coloneqq \begin{psmallmatrix}
			\om & & & & & \\
			& \om & & & & \\
			& & \om & & & \\
			& & & 1 & & \\
			& & & & 1 & \\
			& & & & & 1
		\end{psmallmatrix},
		p_2 \coloneqq \begin{psmallmatrix}
			1 & & & & & \\
			& \om & & & & \\
			& & \om^2 & & & \\
			& & & 1 & & \\
			& & & & 1 & \\
			& & & & & 1
		\end{psmallmatrix},
		p_3 \coloneqq\begin{psmallmatrix}
			1 & & & & & \\
			& 1 & & & & \\
			& & 1 & & & \\
			& & & 1 & & \\
			& & & & \om & \\
			& & & & & \om^2
		\end{psmallmatrix},
		$$
		$$
		p_4 \coloneqq \begin{psmallmatrix}
			0 & 0 & 1 & & & \\
			1 & 0 & 0 & & & \\
			0 & 1 & 0 & & & \\
			& & & 1 & & \\
			& & & & 1 & \\
			& & & & & 1
		\end{psmallmatrix},
		p_5 \coloneqq \begin{psmallmatrix}
			1 & & & & & \\
			& 1 & & & & \\
			& & 1 & & & \\
			& & & 0 & 0 & 1 \\
			& & & 1 & 0 & 0 \\
			& & & 0 & 1 & 0
		\end{psmallmatrix},
		p_6 \coloneqq \begin{psmallmatrix}
			0& 1& & & & \\
			1& 0 & & & & \\
			& & 1& & & \\
			& & &0 &1 & \\
			& & &1&0 & \\
			& & & & &1
		\end{psmallmatrix},
		$$
		$$
		p_7 \coloneqq \begin{psmallmatrix}
			& & &1 & & \\
			& & & &1 & \\
			& & & & &1 \\
			0& 1& 0& & & \\
			1& 0& 0& & & \\
			0& 0& 1& & &
		\end{psmallmatrix},
		p_8 \coloneqq \tfrac{1}{\sqrt{3}} \begin{psmallmatrix}
			\om & \om^2 & 1 & & &  \\
			1 & 1 & 1 & & & \\
			\om^2 & \om & 1 & & & \\
			& & & \om^2 & \om & 1 \\
			& & & \om^2 & \om^2 & \om^2 \\
			& & & \om^2 & 1 & \om 
		\end{psmallmatrix};
		$$ 
		\item[-] references: \cite[Example 4.2.6]{Mongardi2013}, \cite[\S 8, p.244]{HoehnMason2019}.
	\end{itemize}
	
	\item $G \simeq M_{10}$, IdGroup $(720,765)$
	\begin{itemize} 
		\item For $X^1(M_{10})=V(f_1)$:
		\begin{itemize}
			\item[-] $f_1=x_0^3+x_1^3+x_2^3+x_3^3+x_4^3+x_5^3+ \tfrac{1}{5}(-3\xi^7-3\xi^5+3\xi^4-3\xi^3+6\xi-3) \cdot \big[x_0x_1x_2 + x_0x_1x_3+ (\xi^4-1)x_0x_1x_4 + x_0x_1x_5 + (\xi^4-1)x_0x_2x_3 + x_0x_2x_4 + x_0x_2x_5 + (\xi^4-1)x_0x_3x_4 - \xi^4 x_0x_3x_5 - \xi^4 x_0x_4x_5 + (\xi^4-1)x_1x_2x_3 + (\xi^4-1)x_1x_2x_4 - \xi^4 x_1x_2x_5 + x_1x_3x_4 + x_1x_3x_5 - \xi^4 x_1x_4x_5 + x_2x_3x_4 - \xi^4x_2x_3x_5 + x_3x_4x_5 + x_3x_4x_5\big]$  where $\xi=e^{\tfrac{\pi i}{12}}$
			\item[-] action generated by 
			$$g_1 \coloneqq \begin{psmallmatrix}
				1 & & & & &  \\
				& 0 & 1 & & & \\
				& 1 & 0 & & & \\
				& & & 0 & 1 & \\
				& & & 1 & 0 & \\
				& & & & & 1
			\end{psmallmatrix},
			g_2 \coloneqq \begin{psmallmatrix}
				0 & 1 & 0 & 0 & & \\
				0 & 0 & \om & 0 & & \\
				0 & 0 & 0 & 1 & & \\
				\om^2 & 0 & 0 & 0 & & \\
				& & & & 0 & 1 \\
				& & & & 1 & 0
			\end{psmallmatrix},
			\tfrac{1}{\sqrt{6}} \begin{psmallmatrix}
				1 & \om & \om^2 & \om & 1 & \om \\
				\om^2 & 1 & 1 & \om & \om & \om \\
				\om & 1 & \om^2 & \om & \om^2 & \om^2 \\
				\om^2 & \om^2 & \om & \om & 1 & \om^2 \\
				1 & \om^2 & 1 & \om & \om^2 & 1 \\
				\om^2 & \om^2 & \om^2 & \om^2 & \om^2 & \om 
			\end{psmallmatrix};
			$$
			\item[-] references: \cite[\S 8, p.244]{HoehnMason2019}
		\end{itemize}
		
		\item For $X^2(M_{10})=V(f_2)$:
		\begin{itemize}
			\item[-] explicit equation of $f_2$ provided in \cite[Theorem 6.15]{YangYuZhu}  
			\item[-] action generated by $\diag(1,-1,i,-i,\xi_8^7,-\xi_8)$ and the matrix in the supplementary file \texttt{Theorem6.15.txt} of \cite{YangYuZhu}
			\item references: \cite[Theorem 6.15]{YangYuZhu}
		\end{itemize}
	\end{itemize}
	
	\item $G \simeq L_2(11)=\PSL_2(\mathbb{F}_{11})$, IdGroup $(660,13)$
	\begin{itemize}
		\item[-] $f=x_0^3+x_1^2x_5+x_2^2x_4+x_3^2x_2+x_4^2x_1+x_5^2x_3$
		\item[-] action generated by
		$$
		h_1\coloneqq\begin{psmallmatrix}
			1 & 0 & 0 & 0 & 0 & 0 \\
			0 & 0 & 1 & 0 & 0 & 0 \\
			0 & 1 & 0 & 0 & 0 & 0 \\
			0 & 0 & 0 & 0 & 0 & 1 \\
			0 & 1 & -1 & c & 1 & c \\
			0 & 0 & 0 & 1 & 0 & 0
		\end{psmallmatrix},
		h_2 \coloneqq\begin{psmallmatrix}
			1 & 0 & 0 & 0 & 0 & 0 \\
			0 & 0 & 0 & 0 & 1 & 0 \\
			0 & 0 & 0 & 1 & 0 & 0  \\
			0 & 0 & -1 & -1 & 0 & 0 \\
			0 & -C & 0 & 0 & -1 & -c-2C  \\
			0 & 1 & 0 & 0 & -1 & 1 
		\end{psmallmatrix},
		$$ where $c=\zeta + \zeta^3+\zeta^4+\zeta^5+\zeta^9$, $C=-1-c$, and $\zeta=e^{\tfrac{2 \pi i}{11}}$; note that $(h_1 h_2)^{11}=1$
		\item[-] references: \cite{Adler1978}, \cite[Theorem 1.3]{Mongardi2013a}, ATLAS.
	\end{itemize}
	
	\item $G \simeq A_{3,5}$, IdGroup $(360,120)$
	\begin{itemize}
		\item[-] $f=x_0^3+x_1^3+(-x_0-x_1)^3 + x_2^3+x_3^3+x_4^3+x_5^3+ (-x_2-x_3-x_4-x_5)^3$
		\item[-] action: given the permutation groups $S_3$ on $\{x_0,x_1,-x_0-x_1\}$, and $S_5$ on $\{x_2,x_3,x_4,x_5,-x_2-x_3-x_4-x_5\}$, the action of $G$ is given by the even permutations $(S_3 \times S_5)\cap A_8$
		\item[-] reference: \cite[Theorem 1.8]{LazaZheng2022}.
	\end{itemize}
	
	\item $G \simeq Q_8 \rtimes S_3$, IdGroup $(48,29)$
	\begin{itemize}
		\item[-] $\mathcal{M}_G$ is one-dimensional, irreducible and contains $V(f)$ where $f=x_0^3+x_0(x_1^2+x_2^2+x_3^2)+x_1x_2x_3+ix_2(x_4^2+x_5^2)+2x_1x_4x_5+x_3(x_4^2-x_5^2)$
		\item[-] action: generators of $S_3$ 
		$$ n_1 \coloneqq \begin{psmallmatrix}
			1 & & & & &  \\
			& 0 & 1& 0& & \\
			& 1 & 0& 0& & \\
			& 0 & 0& 1 & & \\
			& & & & \tfrac{1}{\sqrt{2}} & \tfrac{-1}{\sqrt{2}} \\
			& & & & \tfrac{i}{\sqrt{2}} & \tfrac{-1}{\sqrt{2}}
		\end{psmallmatrix},
		n_2 \coloneqq \begin{psmallmatrix}
			1 & & & & &  \\
			& 1 & & & & \\
			& & 0 & 1& & \\
			& & 1 & 0& & \\
			& & & & 0 & \xi_8^3 \\
			& & & & \xi_8^5 & 0
		\end{psmallmatrix},
		$$
		and generators of $Q_8$
		$$ \begin{psmallmatrix}
			1 & & & & &  \\
			& -1 & & & & \\
			& & 1 & & & \\
			& & & -1 & & \\
			& & & & 0 &  1\\
			& & & & -1& 0
		\end{psmallmatrix},\diag(1,1,-1,-1,i,-i);
		$$
		\item[-] references: \cite[\S 7.6]{Koike2024} 
	\end{itemize}
	
\end{enumerate} 

\section{Supporting GAP code}\label{app:Gap}

\subsection{Elements of order 3 with normal form $\diag(1,1,1,\omega,\omega,\omega)$}\label{app:L3}
Let $G$ be a group acting symplectically on a smooth cubic fourfold. By Proposition \ref{prop:fixed loci codim2}, we have to identify the elements of $G$ of order 3 and having normal form $\diag(1,1,1,\omega,\omega,\omega)$. This last condition is not purely group-theoretic, in the sense that it depends on the embedding of $G$ in $\mathrm{PGL}_6$, hence on the action of $G$ on the cubic fourfold. 

The GAP code below produces a list \texttt{L3} that contains all elements of $G$ of order 3 and having normal form $\diag(1,1,1,\omega,\omega,\omega)$, for the actions of the groups $G$ in Appendix \ref{appendix}.
\bigskip

\noindent \textbf{For the groups $A_{3,5}, L_2(11), M_{10}, A_7, Q_8 \rtimes S_3$}

\begin{lstlisting}[language=GAP]
#Define G=A_{3,5} by the generators l, t, s in PGL_6. As in G there are precisely 3 conjugacy classes of subgroups of order 3, construct a subgroup for each class, compute the characteristic polynomial of a generator, and compare it with the characteristic polynomial of diag(1,1,1,w,w,w). Set L3 as the empty list.
w:=E(3);
l:=[[0,-1,0,0,0,0],[1,-1,0,0,0,0],[0,0,1,0,0,0],
[0,0,0,1,0,0],[0,0,0,0,1,0],[0,0,0,0,0,1]];;
t:=[[0,1,0,0,0,0],[1,0,0,0,0,0],[0,0,0,1,0,0],
[0,0,1,0,0,0],[0,0,0,0,1,0],[0,0,0,0,0,1]];;
s:=[[1,0,0,0,0,0],[0,1,0,0,0,0],[0,0,0,0,0,-1],
[0,0,1,0,0,-1],[0,0,0,1,0,-1],[0,0,0,0,1,-1]];;
G:=GroupByGenerators([l,t,s]);;
H1:=Subgroup(G,[l]);;
H2:=Subgroup(G,[t*s*t*s^-1]);;
H3:=Subgroup(G,[l*t*s*t*s^-1]);;
IsConjugate(G,H1,H2);
IsConjugate(G,H1,H3);
IsConjugate(G,H2,H3);
CharacteristicPolynomial(l);
CharacteristicPolynomial(t*s*t*s^-1);
CharacteristicPolynomial(l*t*s*t*s^-1);
CharacteristicPolynomial(DiagonalMat([1,1,1,w,w,w]));
L3:=[];;

#In G=L_2(11) there is precisely 1 conjugacy class of subgroups of order 3, and a subgroup for this class is generated by h2, one of the generators of the action of G in PGL_6. Construct h2, compute its characteristic polynomial, and compare it with the characteristic polynomial of diag(1,1,1,w,w,w). Set L3 as the empty list.
z:=E(11);
c:=z+z^3+z^4+z^5+z^9;
C:=-1-c;
h2:=[[1,0,0,0,0,0],[0,0,0,0,1,0],[0,0,0,1,0,0],
[0,0,-1,-1,0,0],[0,-C,0,0,-1,-c-2*C],[0,1,0,0,-1,1]];;
CharacteristicPolynomial(h2);
CharacteristicPolynomial(DiagonalMat([1,1,1,w,w,w]));
L3:=[];;

#In G=M_10 there is precisely 1 conjugacy class of subgroups of order 3, and a subgroup for this class is generated by the element g, expressed in terms of the generators of the action of G in PGL_6 on X^1(G). Construct g, compute its characteristic polynomial, and compare it with the characteristic polynomial of diag(1,1,1,w,w,w). Set L3 as the empty list.
w:=E(3);
g1:=[[1,0,0,0,0,0],[0,0,1,0,0,0],[0,1,0,0,0,0],
[0,0,0,0,1,0],[0,0,0,1,0,0],[0,0,0,0,0,1]];;
g2:=[[0,1,0,0,0,0],[0,0,w,0,0,0],[0,0,0,1,0,0],
[w^2,0,0,0,0,0],[0,0,0,0,0,1],[0,0,0,0,1,0]];;
g:=g1*g2*(g1*g2^-1)^2*g1*g2^2*g1*g2^-1;
CharacteristicPolynomial(g);
CharacteristicPolynomial(DiagonalMat([1,1,1,w,w,w]));
L3:=[];;

#Define the actions of G=M_10 on X^2(G), of G=A_7 on X^1(G) and X^2(G), and of G=Q_8 \rtimes S_3, by constructing their generators in PGL_6. In ord3 list all elements of G of order 3, and in listpoly all the corresponding characteristic polynomials. Compare listpoly with the characteristic polynomial of diag(1,1,1,w,w,w). Set L3 as the empty list. 

#For G=M_10 acting on X^2(G):
k1:=[ [ 1/4*E(3)^2, 17/44*E(24)-1/22*E(24)^8+5/22*E(24)^11-4/11*E(24)^14-5/44*E(24)^16+3/22*E(24)^17+23/44*E(24)^19-6/11*E(24)^22, 3/22*E(24)+1/44*E(24)^8+4/11*E(24)^11+5/44*E(24)^14-1/44*E(24)^16-1/22*E(24)^17+13/44*E(24)^19-3/22*E(24)^22,       -5/44*E(24)-27/44*E(24)^8+1/11*E(24)^11+7/44*E(24)^14-9/44*E(24)^16-15/22*E(24)^17+1/2*E(24)^19-5/11*E(24)^22, 1/44*E(24)+1/11*E(24)^8-2/11*E(24)^11+1/22*E(24)^14-3/44*E(24)^16-1/22*E(24)^17+1/44*E(24)^19-1/4*E(24)^22, -3/44*E(24)+5/11*E(24)^8-1/2*E(24)^11-1/22*E(24)^14+15/44*E(24)^16+4/11*E(24)^17-7/44*E(24)^19+5/44*E(24)^22 ], 
[17/44*E(24)+1/22*E(24)^8+5/22*E(24)^11+4/11*E(24)^14+5/44*E(24)^16+3/22*E(24)^17+23/44*E(24)^19+6/11*E(24)^22,-1/4*E(3)^2, -5/44*E(24)+27/44*E(24)^8+1/11*E(24)^11-7/44*E(24)^14+9/44*E(24)^16-15/22*E(24)^17+1/2*E(24)^19+5/11*E(24)^22, 3/22*E(24)-1/44*E(24)^8+4/11*E(24)^11-5/44*E(24)^14+1/44*E(24)^16-1/22*E(24)^17+13/44*E(24)^19+3/22*E(24)^22,       1/44*E(24)-1/11*E(24)^8-2/11*E(24)^11-1/22*E(24)^14+3/44*E(24)^16-1/22*E(24)^17+1/44*E(24)^19+1/4*E(24)^22, -3/44*E(24)-5/11*E(24)^8-1/2*E(24)^11+1/22*E(24)^14-15/44*E(24)^16+4/11*E(24)^17-7/44*E(24)^19-5/44*E(24)^22 ], 
[ 11/292*E(24)-35/292*E(24)^8-10/73*E(24)^11+69/292*E(24)^14-23/292*E(24)^16-22/73*E(24)^17-17/73*E(24)^19+55/146*E(24)^22, -155/146*E(24)-321/292*E(24)^8+119/146*E(24)^11+53/292*E(24)^14-115/292*E(24)^16-147/146*E(24)^17-121/292*E(24)^19+129/146*E(24)^22, -1/4*E(12)^11, -55/292*E(24)-11/73*E(24)^8-23/73*E(24)^11+83/146*E(24)^14+47/73*E(24)^16-36/73*E(24)^17-317/292*E(24)^19+253/292*E(24)^22,       -91/292*E(24)-51/146*E(24)^8+23/73*E(24)^11-5/73*E(24)^14-115/292*E(24)^16-1/146*E(24)^17+25/292*E(24)^19+39/292*E(24)^22, 93/292*E(24)+81/146*E(24)^8-43/146*E(24)^11+38/73*E(24)^14+217/292*E(24)^16+33/73*E(24)^17+29/292*E(24)^19-19/292*E(24)^22], 
[ -155/146*E(24)+321/292*E(24)^8+119/146*E(24)^11-53/292*E(24)^14+115/292*E(24)^16-147/146*E(24)^17-121/292*E(24)^19-129/146*E(24)^22, 11/292*E(24)+35/292*E(24)^8-10/73*E(24)^11-69/292*E(24)^14+23/292*E(24)^16         -22/73*E(24)^17-17/73*E(24)^19-55/146*E(24)^22, -55/292*E(24)+11/73*E(24)^8-23/73*E(24)^11-83/146*E(24)^14-47/73*E(24)^16-36/73*E(24)^17-317/292*E(24)^19-253/292*E(24)^22, 1/4*E(12)^11, -91/292*E(24)+51/146*E(24)^8+23/73*E(24)^11+5/73*E(24)^14 +115/292*E(24)^16-1/146*E(24)^17+25/292*E(24)^19-39/292*E(24)^22, 93/292*E(24)-81/146*E(24)^8-43/146*E(24)^11-38/73*E(24)^14-217/292*E(24)^16+33/73*E(24)^17+29/292*E(24)^19+19/292*E(24)^22 ],   
[ 3/8*E(24)-1/2*E(24)^8+1/4*E(24)^14-3/8*E(24)^16+1/4*E(24)^17-1/8*E(24)^19+1/8*E(24)^22, 3/8*E(24)+1/2*E(24)^8-1/4*E(24)^14+3/8*E(24)^16+1/4*E(24)^17-1/8*E(24)^19-1/8*E(24)^22,       1/8*E(24)+1/4*E(24)^11+1/4*E(24)^14-3/8*E(24)^16-1/8*E(24)^19+1/8*E(24)^22, 1/8*E(24)+1/4*E(24)^11-1/4*E(24)^14+3/8*E(24)^16-1/8*E(24)^19-1/8*E(24)^22, 0, E(24)-1/2*E(24)^11+1/2*E(24)^17-1/2*E(24)^19 ],   
[ 7/104*E(24)+3/26*E(24)^8-5/52*E(24)^11-9/52*E(24)^14+3/104*E(24)^16-3/13*E(24)^17-9/104*E(24)^19-11/104*E(24)^22, 7/104*E(24)-3/26*E(24)^8-5/52*E(24)^11+9/52*E(24)^14-3/104*E(24)^16-3/13*E(24)^17-9/104*E(24)^19+11/104*E(24)^22,       15/104*E(24)+2/13*E(24)^8+3/26*E(24)^11+1/52*E(24)^14+17/104*E(24)^16+17/52*E(24)^17-23/104*E(24)^19+7/104*E(24)^22, 15/104*E(24)-2/13*E(24)^8+3/26*E(24)^11-1/52*E(24)^14-17/104*E(24)^16+17/52*E(24)^17-23/104*E(24)^19-7/104*E(24)^22, -3/26*E(24)-5/26*E(24)^11+1/26*E(24)^17+1/13*E(24)^19, 0 ] ];;
k2:=DiagonalMat([1,-1,E(4),-E(4),E(8)^7,-E(8)]);;
G:=GroupByGenerators([k1,k2]);;

#For G=A_7 acting on X^1(G):
i1:=[[0,1,0,0,0,0],[0,0,1,0,0,0],[1,0,0,0,0,0],
[0,0,0,1,0,0],[0,0,0,0,1,0],[0,0,0,0,0,1]];;
i2:=[[0,0,0,0,1,0],[0,1,0,0,0,0],[0,0,0,1,0,0],
[0,0,0,0,0,1],[-1,-1,-1,-1,-1,-1],[0,0,1,0,0,0]];;
G:=GroupByGenerators([i1,i2]);;

#For G=A_7 acting on X^2(G):
m1:=[[1,0,0,0,0,0],[0,E(3),0,0,0,0],[0,0,E(3)^2,0,0,0],
[0,0,0,-1,0,0],[0,0,0,0,E(3),0],[0,0,0,0,0,-E(3)^2]];;
m2:=[[1/2,1/2,1/2,Sqrt(5/3)/6,Sqrt(5/3)/6,    Sqrt(5/3)/6],
[1/2, (1/2)*E(3),-(1/2)*E(6),Sqrt(5/3)/6,1/6 *E(3)*Sqrt(5/3),-(1/36)*Sqrt(5)*(3*E(4)+Sqrt(3))], 
[1/2,-(1/2)*E(6),(1/2)*E(3),Sqrt(5/3)/6, -(1/36)*Sqrt(5)*(3*E(4)+Sqrt(3)),1/6 *E(3)*Sqrt(5/3)], 
[Sqrt(3/5)/2, Sqrt(3/5)/2, Sqrt(3/5)/2, -(1/2),-(1/2),-(1/2)], 
[Sqrt(3/5)/2,1/2 *E(3)*Sqrt(3/5),-(1/2)*E(6)* Sqrt(3/5),-(1/2),-(1/2)*E(3),1/2*E(6)], 
[Sqrt(3/5)/2,-(1/2)*E(6)*Sqrt(3/5),1/2*E(3)*Sqrt(3/5),-(1/2),1/2*E(6),-(1/2)*E(3)]];;
G:=GroupByGenerators([m1,m2]);;

#For G=Q_8 \rtimes S_3:
n1:=[[1,0,0,0,0,0],[0,0,1,0,0,0],[0,1,0,0,0,0],[0,0,0,1,0,0],
[0,0,0,0,Sqrt(2)^-1, E(4)/Sqrt(2)],[0,0,0,0,-E(4)/Sqrt(2), -1/Sqrt(2)]];;
n2:=[[1,0,0,0,0,0],[0,1,0,0,0,0],[0,0,0,1,0,0],
[0,0,1,0,0,0],[0,0,0,0,0,E(8)^5],[0,0,0,0,E(8)^3,0]];;
n3:=[[1,0,0,0,0,0],[0,-1,0,0,0,0],[0,0,1,0,0,0],
[0,0,0,-1,0,0],[0,0,0,0,0,-1],[0,0,0,0,1,0]];;
n4:=[[1,0,0,0,0,0],[0,1,0,0,0,0],[0,0,-1,0,0,0],
[0,0,0,-1,0,0],[0,0,0,0,E(4),0],[0,0,0,0,0,-E(4)]];;
G:=GroupByGenerators([n1,n2,n3,n4]);;

#Construct ord3, listpoly and L3:
G3:=[];;
for i in Elements(G) do
    if IsDiagonalMatrix(i^3)=true then
          Add(G3,[i,i^3]);
    fi;
od;
mid:= [ [ 1, 0, 0, 0, 0, 0 ], [ 0, 1, 0, 0, 0, 0 ], [ 0, 0, 1, 0, 0, 0 ], [0, 0, 0, 1, 0, 0 ], [ 0, 0, 0, 0, 1, 0 ],[ 0, 0, 0, 0, 0, 1 ] ];;
ord3:=[];;
for i in G3 do
    if i[2]=mid then
          Add(ord3, i[1]);
    fi;
od;
list3:=[];;
for i in ord3 do
        Add(list3,[i,CharacteristicPolynomial(i)]);
od;
listpoly:=[];;
for i in list3 do
    a:=0;
    for j in listpoly do
         if i[2]=j then
              a:=1;
         fi;
    od;
    if a=0 then
        Add(listpoly,i[2]);
    fi;
od;
listpoly;
CharacteristicPolynomial(DiagonalMat([1,1,1,E(3),E(3),E(3)]));
L3:=[];;
\end{lstlisting}

\noindent \textbf{For the group $(C_3^3 \rtimes_3 C_3^2)\rtimes Q_8$}

\begin{lstlisting}[language=GAP]
#Define the group G=C_3^3 \rtimes_3 C_3^2)\rtimes Q_8 by the generators p_i's. In listpoly list all the characteristic polynomials corresponding to the elements of G of order 3. Compare listpoly with the characteristic polynomial of diag(1,1,1,w,w,w). Set L3 as the list containing a generator of the unique subgroup of G of order 3 and having normal form diag(1,1,1,w,w,w). 
w:=E(3);;
t:=ER(3);
p1:=DiagonalMat([w,w,w,1,1,1]);;
p12:=DiagonalMat([1,1,1,w,w,w]);; #the square of p1 in PGL_6
p2:=DiagonalMat([1,w,w^2,1,1,1]);;
p3:=DiagonalMat([1,1,1,1,w,w^2]);;
p4:=[[0,1,0,0,0,0],[0,0,1,0,0,0],[1,0,0,0,0,0],
[0,0,0,1,0,0],[0,0,0,0,1,0],[0,0,0,0,0,1]];;
p5:=[[1,0,0,0,0,0],[0,1,0,0,0,0],[0,0,1,0,0,0],
[0,0,0,0,1,0],[0,0,0,0,0,1],[0,0,0,1,0,0]];;
p6:=[[0,1,0,0,0,0],[1,0,0,0,0,0],[0,0,1,0,0,0],
[0,0,0,0,1,0],[0,0,0,1,0,0],[0,0,0,0,0,1]];;
p7:=[[0,0,0,0,1,0],[0,0,0,1,0,0],[0,0,0,0,0,1],
[1,0,0,0,0,0],[0,1,0,0,0,0],[0,0,1,0,0,0]];;
p8:=[[w/t,w^2/t,1/t,0,0,0],[1/t,1/t,1/t,0,0,0],[w^2/t,w/t,1/t,0,0,0],
[0,0,0,w^2/t,w/t,1/t],[0,0,0,w^2/t,w^2/t,w^2/t],[0,0,0,w^2/t,1/t,w/t]];;
V:=GroupByGenerators([p1,p12,p2,p3,p4,p5,p6,p7,p8]);
dV:=Center(V);;
G:=V/dV;; #Projection of V from GL_6 to PGL_6
epiVG:=GQuotients(V,G);; #Compute the surjective morphisms from V to G
f:=epiVG[1];;
G3:=[];;
for i in Elements(V) do
    if IsDiagonalMatrix(i^3)=true then
          Add(G3,[i,CharacteristicPolynomial(i)]);
    fi;
od;
listpoly:=[];;
for j in G3 do
    if j[2]=CharacteristicPolynomial(DiagonalMat([1,1,1,E(3),E(3),E(3)])) then
          Add(listpoly,j[1]);
    fi;
od;
x:=Image(f,listpoly[1]);;
L3:=[x];;
\end{lstlisting}

\noindent \textbf{For the group $C_3^4\rtimes A_6$}

\begin{lstlisting}[language=GAP]
#Define the group G=C_3^4\rtimes A_6 as semidirect product, and the list L3 applying Lemma 2.5.
A:=SmallGroup(360,118);;
genA:=GeneratorsOfGroup(A);; #output (1,2,3,4,5) and (2,3,4,5,6)
C3:=SmallGroup(3,1);;
C:=DirectProduct(C3,C3,C3,C3);;
genC:=GeneratorsOfGroup(C);;
a:=genC[1];;
b:=genC[2];;
c:=genC[3];;
d:=genC[4];;
Aut:=AutomorphismGroup(C);;
f1:=function(h)
> local p1, p2, p3, p4, e1, e2, e3, e4, x, y, z, w;
> p1:=Projection(C,1);
> p2:=Projection(C,2);
> p3:=Projection(C,3);
> p4:=Projection(C,4);
> e1:=Embedding(C,1);
> e2:=Embedding(C,2);
> e3:=Embedding(C,3);
> e4:=Embedding(C,4);
> x:=h^p1;
> y:=h^p2;
> z:=h^p3;
> w:=h^p4;
> return (w^-1)^e1*x^e2*(w^-1)^e2*y^e3*(w^-1)^e3*z^e4*(w^-1)^e4;
> end;
f1sd:=GroupHomomorphismByFunction(C,C,f1);;
f2:=function(h)
> local p1, p2, p3, p4, e1, e2, e3, e4, x, y, z, w;
> p1:=Projection(C,1);
> p2:=Projection(C,2);
> p3:=Projection(C,3);
> p4:=Projection(C,4);
> e1:=Embedding(C,1);
> e2:=Embedding(C,2);
> e3:=Embedding(C,3);
> e4:=Embedding(C,4);
> x:=h^p1;
> y:=h^p2;
> z:=h^p3;
> w:=h^p4;
> return (x^-1)^e1*(y^-1)^e1*(z^-1)^e1*(w^-1)^e1*x^e2*y^e3*z^e4;
> end;
f2sd:=GroupHomomorphismByFunction(C,C,f2);;
alpha:=GroupHomomorphismByImages(A,Aut,genA,[f1sd,f2sd]);;
G:=SemidirectProduct(A,alpha,C);;
l3:=[a*b, a*c, a*d, a*b*c, a*b*d, a*c*d, b*c, b*d, c*d, b*c*d];;
L3:=[];;
for i in l3 do
    Add(L3, Embedding(G,2)(i));
od;
\end{lstlisting}

\subsection{Codimension 2 fixed loci and the fundamental group}\label{app:invariants}

Let $G$ be one of the groups in Appendix \ref{appendix}, which acts symplectically on a smooth cubic fourfold $X$. Let \texttt{L3} be the set of subgroups of $G$ generated by an element of order $3$ with normal form $\diag(1,1,1,\omega, \omega, \omega)$, hence an element of order $3$ which fixes a codimension 2 locus in $F(X)$; see Proposition \ref{prop:fixed loci codim2}.

For each subgroup $H$ of $G$, the GAP code below computes the number of components of the singular locus of $F(X)/H$ of codimension 2 -- and whether their blowup extracts one or two exceptional divisors -- and the fundamental group of the smooth locus of a terminalization of $F(X)/H$. More precisely, the code first lists in \texttt{cc} all subgroups of $G$, up to conjugation. Then, for each subgroup $H$ in \texttt{cc}, it computes 
    \begin{itemize}
        \item the group ID or the size of $H$;
        \item \texttt{fix}, the subgroups of $H$ of order 2 or contained in \texttt{L3};
        \item \texttt{hcong2}, the subgroups of $H$ of order 2, up to conjugation; 
        \item \texttt{h3}, the subgroups of $H$ contained in \texttt{L3};
        \item \texttt{hcong3}, the conjugacy classes of subgroups of $H$ contained in \texttt{L3};
        \item \texttt{hcong31}, the conjugacy classes of subgroups $\langle g \rangle$ of $H$ contained in \texttt{L3} and such that $N_H(g)\setminus C_H(g)$ contains elements of even order;
        \item \texttt{hcong32}, the conjugacy classes of subgroups $\langle g \rangle$ of $H$ contained in \texttt{L3} and such that $N_H(g)\setminus C_H(g)$ contains no elements of even order;
        \item \texttt{N}, the subgroup of $H$ generated by the elements in \texttt{fix};
        \item \texttt{pai}, the quotient of $H$ by $N$.
    \end{itemize}
The size of \texttt{hcong2}, \texttt{h3}, \texttt{hcong3}, \texttt{hcong31}, and \texttt{hcong32} gives, respectively, the invariants $n_2$, $N_3$, $n_3$, and $n_{31}$, $n_{32}$; see Theorem \ref{thm:b2} and Proposition \ref{prop:b2_grouptheoretic}. The quotient \texttt{pai} is the fundamental group of the smooth locus of a terminalization of $F(X)/H$, by \cite[Proposition 8.1]{BertiniGrossiMauriMazzon2024}

Finally, the list \texttt{output} collects a string
$$
[\text{IdGroup}(H), n_2, N_3, n_3, [n_{31}, n_{32}], \text{IdGroup}(\texttt{pai})]
$$
for each subgroup $H$ such that $n_2+n_3 > 0$, hence such that the singular locus of $F(X)/H$ has some components of codimension 2 (or equivalently $F(X)/H$ is not terminal).
\bigskip

\noindent \textbf{Preamble for the groups $A_{3,5}, L_2(11), M_{10}, A_7, Q_8 \rtimes S_3$}

\begin{lstlisting}[language=GAP]
G:=SmallGroup(360,120);; #Define the group G=A_{3,5}
G:=SmallGroup(660,13);; #Define the group G=L_2(11)
G:=SmallGroup(720,765);; #Define the group G=M_{10}
G:=AlternatingGroup(7);; #Define the group G=A_7
G:=SmallGroup(48,29);; #Define the group G=Q_8 \rtimes S_3
L3:=[];;
\end{lstlisting}

\noindent \textbf{Preamble for the groups $(C_3^3 \rtimes_3 C_3^2)\rtimes Q_8, C_3^4\rtimes A_6$}

\begin{lstlisting}[language=GAP]
#For G=(C_3^3 \rtimes_3 C_3^2)\rtimes Q_8 and G=C_3^4\rtimes A_6, define G and construct L3 as in Appendix B.1
\end{lstlisting}

\noindent \textbf{For any group $G$}

\begin{lstlisting}[language=GAP]
#Create a list cc with all subgroups H of G, up to conjugation:
cc:= List(ConjugacyClassesSubgroups(G), Representative);;
s:=Size(cc);;

#Create a list outputall that, for each subgroup H in cc, gives an entry with [#H in cc, IdGroup(H) or Size(H),n_2,N_3,n_3,[n_31,n_32],IdGroup(pai)]: 
outputall:=[];;
for i in [1..s] do
    H:=cc[i];
    h3:=[];
    hcong3:=[];
    hcong31:=[];
    hcong32:=[];
    hcong2:=[];
    fix:=[];
    for j in Elements(H) do
        if j in L3 then
            Add(h3,j);
            Add(fix,j);
            x:=0;
            for l in hcong3 do
                for g in Elements(H) do
                    if j=g*l*g^-1 then
                            x:=1;
                    elif j=g*l^2*g^-1 then
                            x:=1;
                    fi;
                od;
            od;
            if x=0 then
                Add(hcong3,j);
                Nj:=Normalizer(H,j);
                Cj:=Centralizer(H,j);
                zj:=0;
                for k in Elements(Nj) do
                    if (k in Cj)=false then
                        if IsEvenInt(Order(k)) then 
                            zj:=1;
                        fi;
                    fi;
                od;
                if zj=0 then
                    Add(hcong32,j);
                elif zj=1 then
                    Add(hcong31,j);
                fi;
            fi;
        elif IsOne(j)=false then
            if IsOne(j^2)=true then
             	Add(fix,j);
             	y:=0;
             	for l in hcong2 do
                   for g in Elements(H) do
                       if j=g*l*g^-1 then
                            y:=1;
                       fi;
                   od;
              	od;
              	if y=0 then
                    Add(hcong2,j);
              	fi;
	        fi;
        fi;
    od;
    N:=Subgroup(H,fix);
    pai:=H/N;
    if Size(H)<2520 then
        Add(outputall,[i,IdGroup(H),Size(hcong2),
        Size(h3),Size(hcong3),[Size(hcong31),Size(hcong32)],IdGroup(pai)]);
    else
        Add(outputall,[i,[Size(H),0],Size(hcong2),
        Size(h3),Size(hcong3),[Size(hcong31),Size(hcong32)],IdGroup(pai)]);
    fi;
od;

#Create a list output that extracts from the list outputall the subgroups H which contain elements fixing a locus of codimension 2:
output:=[];;
for i in outputall do
    if i[3]+i[5]>0 then
         Add(output,i);
    fi;
od;
\end{lstlisting}

\subsection{Second Betti number}
Let $G$ be one of the groups in Appendix \ref{appendix}, which acts symplectically on a smooth cubic fourfold $X$. Let $H$ be a subgroup of $G$ such that $F(X)/H$ is not terminal. The goal of the GAP code below is to compute the second Betti number of a terminalization of $F(X)/H$, which by Theorem \ref{thm:b2} is given by the formula
$$
b_2(Y)= \rk(H^2(F(X),\Z)^H) + n_2 + n_{31} + 2n_{32}.
$$

The code proceeds as follows. It imports \cite[Table 12]{HoehnMason2019}, where the possible ranks of the coinvariant lattice $H^2(X',\Z)_{G'}$ -- the orthogonal complement of $H^2(X',\Z)^{G'}$ -- are determined, for any finite group $G'$ of symplectic automorphisms of a hyperkahler manifold $X'$ of $\text{K}3^{[2]}$-type. In particular, each entry in \texttt{rkL} lists
\begin{itemize}
    \item the row number $i$ in \cite[Table 12]{HoehnMason2019}, 
    \item the group ID of the corresponding group $G'_i$,
    \item the rank of the coinvariant lattice  $H^2(X',\Z)_{G'_i}$.
\end{itemize}

Given the group $G$ and the list \texttt{output} from Appendix \ref{app:invariants}, the code matches each subgroup $H$ in \texttt{output} with the entries in \texttt{rkL} with the same group ID of $H$, and compute the corresponding $b_2$. For the groups appearing multiple times in \texttt{rkL} with different rank, it remains to study the correct rank of $H^2(F(X),\Z)^H$ to determine the correct $b_2$; see Section \ref{subsec:rk}.
\bigskip

\begin{lstlisting}[language=GAP]
#Import from [HM19] the ranks on the coinvariant lattices H^2(X',Z)_{G'_i} 
\end{lstlisting}
\begin{multicols}{3}
\begin{lstlisting}[language=GAP]
rkL:=[
 [1 ,[1, 1], 0]
,[2 ,[2, 1], 8]
,[3 ,[3 ,1 ], 12]
,[4 ,[3 ,1 ], 18]
,[5 ,[4 ,1 ], 14]
,[6 ,[4 ,2 ], 12]
,[7 ,[5 ,1 ], 16]
,[8 ,[6 ,2 ], 16]
,[9 ,[6 ,1 ], 14]
,[10 ,[6 ,2 ], 18]
,[11 ,[7 ,1 ], 18]
,[12 ,[8 ,1 ], 18]
,[13 ,[8 ,3 ], 15]
,[14 ,[8 ,2 ], 16]
,[15/17 ,[8 ,4 ], 17]
,[16 ,[8 ,5 ], 14]
,[18 ,[9 ,2 ], 18]
,[19 ,[9 ,2 ], 16]
,[20 ,[9 ,1 ], 20] 
,[21 ,[9 ,2 ], 20] 
,[22 ,[10 ,1 ], 16]
,[23 ,[11 ,1 ], 20] 
,[24 ,[12 ,4 ], 16]
,[25 ,[12 ,3 ], 16]
,[26 ,[12 ,1 ], 19]
,[27/29 ,[12 ,5], 18]
,[28 ,[12 ,2 ], 20] 
,[30 ,[12 ,1 ], 18]
,[31 ,[14 ,2 ], 20] 
,[32 ,[15 ,1 ], 20] 
,[33/44 ,[16 ,6 ], 19]
,[34/38 ,[16 ,2], 18]
,[35/37 ,[16 ,14], 15]
,[36 ,[16 ,10], 17]
,[39/42/43,[16 ,12 ], 18]
,[40 ,[16 ,8 ], 18]
,[41 ,[16 ,3 ], 17]
,[45 ,[16 ,9 ], 19]
,[46 ,[16 ,13 ], 17]
,[47 ,[16 ,11 ], 16]
,[48/49 ,[18 ,3 ], 18]
,[50 ,[18 ,4 ], 16]
,[51 ,[18 ,3 ], 20] 
,[52 ,[18 ,5 ], 20] 
,[53 ,[20 ,3 ], 18]
,[54 ,[21 ,1 ], 18]
,[55/57 ,[24 ,3 ], 19]
,[56 ,[24 ,12 ], 17]
,[58 ,[24 ,13 ], 18]
,[59 ,[24 ,4 ], 20] 
,[60 ,[24 ,8 ], 18]
,[61 ,[27 ,3 ], 18]
,[62 ,[27 ,5 ], 20] 
,[63/64 ,[27 ,4 ], 20]
,[65 ,[27 ,2 ], 20] 
,[66 ,[27 ,5 ], 18]
,[67 ,[30 ,2 ], 20] 
,[68 ,[32 ,49 ], 17]
,[69 ,[32 ,11 ], 19]
,[70/73 ,[32 ,8], 20] 
,[71 ,[32 ,44 ], 19]
,[72 ,[32 ,31 ], 18]
,[74 ,[32 ,50], 18]
,[75 ,[32 ,7 ], 19]
,[76 ,[32 ,6 ], 18]
,[77 ,[32 ,27 ], 17]
,[78 ,[36 ,11 ], 18]
,[79 ,[36 ,9 ], 18]
,[80 ,[36 ,12 ], 20] 
,[81 ,[36 ,10 ], 18]
,[82 ,[36 ,7 ], 20] 
,[83 ,[42 ,2 ], 20] 
,[84 ,[48 ,30 ], 19]
,[85 ,[48 ,50 ], 17]
,[86 ,[48 ,29 ], 19]
,[87 ,[48 ,3 ], 18]
,[88 ,[48 ,48 ], 18]
,[89/90 ,[48 ,32 ], 20] 
,[91/94 ,[48 ,28 ], 20] 
,[92/93 ,[48 ,49 ], 19]
,[95 ,[54 ,12 ], 20] 
,[96 ,[54 ,13 ], 18]
,[97 ,[54 ,8 ], 18]
,[98 ,[54 ,13 ], 20] 
,[99 ,[55 ,1 ], 20] 
,[100,[ 56 ,11 ] , 20] 
,[101,[ 60 ,7 ] , 20] 
,[102,[ 60 ,5 ] , 18]
,[103 ,[64 ,242 ] , 18]
,[104 ,[64 ,32 ] , 19]
,[105 ,[64 ,35 ] , 19]
,[106 ,[64 ,136 ] , 19]
,[107 ,[64 ,36] , 20] 
,[108 ,[64 ,138 ] , 18]
,[109 ,[72 ,39 ] , 20]
,[110 ,[72 ,41 ] , 19]
,[111 ,[72 ,40 ] , 19]
,[112 ,[72 ,43 ] , 18]
,[113 ,[72 ,22 ] , 20] 
,[114 ,[80 ,49 ] , 19]
,[115 ,[81 ,13 ] , 20] 
,[116 ,[81 ,8 ] , 20] 
,[117 ,[81 ,7 ] , 20] 
,[118 ,[81 ,15 ] , 20] 
,[119 ,[81 ,12 ] , 18]
,[120 ,[96 ,227 ] , 18]
,[121 ,[96 ,195 ] , 19]
,[122 ,[96 ,204 ] , 19]
,[123 ,[96 ,190 ] , 20]
,[124 ,[96 ,64 ] , 19]
,[125 ,[96 ,70 ] , 19]
,[126 ,[108 ,38 ] , 20] 
,[127 ,[108 ,15 ] , 20] 
,[128 ,[108 ,40 ] , 20]
,[129 ,[108 ,37 ] , 20] 
,[130 ,[108 ,37 ] , 19]
,[131 ,[108 ,36 ] , 20] 
,[132 ,[120 ,34 ] , 19]
,[133 ,[128 ,931 ] , 19]
,[134 ,[144 ,184 ] , 19]
,[135 ,[144, 0] , 20] 
,[136 ,[160 ,234 ] , 19]
,[137 ,[162 ,10 ] , 20] 
,[138 ,[162 ,46 ] , 18]
,[139 ,[162 ,52 ] , 20] 
,[140 ,[168 ,42 ] , 19]
,[141 ,[168 ,43 ] , 20] 
,[142 ,[180 ,19 ] , 20] 
,[143 ,[192 ,1493 ] , 19]
,[144 ,[192 ,1024 ] , 20] 
,[145 ,[192 ,201 ] , 20] 
,[146 ,[192 ,1009 ] , 20]
,[147 ,[192 ,184 ] , 20]
,[148 ,[192 ,955 ] , 19]
,[149 ,[192 ,1492 ] , 20]
,[150 ,[192 ,1023 ] , 18]
,[151 ,[216 ,161 ] , 20] 
,[152 ,[216 ,158 ] , 20] 
,[153 ,[243 ,57 ] , 20] 
,[154 ,[243 ,65 ] , 18]
,[155 ,[243 ,51 ] , 20] 
,[156 ,[288 ,1025 ] , 20]
,[157 ,[288 ,1026 ] , 19]
,[158 ,[320 ,1635 ] , 20] 
,[159 ,[324 ,167 ] , 20] 
,[160 ,[324 ,160 ] , 20] 
,[161 ,[324 ,163 ] , 20] 
,[162 ,[336 ,0] , 20] 
,[163 ,[360 ,118 ] , 19]
,[164 ,[360 ,120 ] , 20]
,[165 ,[384 ,5603 ] , 20] 
,[166 ,[384 ,5678 ] , 20]
,[167 ,[384 ,18133 ] , 20] 
,[168 ,[384 ,18135 ] , 19]
,[169 ,[405 ,15 ] , 20] 
,[170 ,[486 ,249 ] , 18]
,[171 ,[486 ,166 ] , 20] 
,[172 ,[576 ,8652 ] , 20] 
,[173 ,[576 ,0] , 20] 
,[174 ,[576 ,5129 ] , 20] 
,[175 ,[648 ,722 ] , 20] 
,[176 ,[648 ,704 ] , 20] 
,[177 ,[660 ,13 ] , 20] 
,[178 ,[720 ,765 ] , 20] 
,[179 ,[720 ,763 ] , 20]
,[180 ,[729 ,321 ] , 20] 
,[181 ,[810 ,101 ] , 20] 
,[182 ,[960 ,11358 ] , 20] 
,[183 ,[960 ,11357 ] , 19]
,[184 ,[972 ,877 ] , 20] 
,[185 ,[972 ,776 ] , 19]
,[186 ,[972 ,777 ] , 20] 
,[187 ,[1152 ,0] , 20] 
,[188 ,[1344 ,0] , 20] 
,[189 ,[1458 ,1229 ] , 20] 
,[190 ,[1920 ,0] , 20] 
,[191 ,[1944 ,3559 ] , 20] 
,[192 ,[1944 ,3877 ] , 20] 
,[193 ,[2520 ,0] , 20] 
,[194 ,[2916 ,0] , 20] 
,[195 ,[4860 ,0] , 20] 
,[196 ,[5760 ,0] , 20] 
,[197 ,[20160 ,0] , 20] 
,[198 ,[29160 ,0] , 20]];;
\end{lstlisting}
\end{multicols}

\begin{lstlisting}[language=GAP]
#Create a list table that, for each subgroup H in cc, gives an entry with [#H in cc, #rk(L_H) in rkL, IdGroup(H) or Size(H),[rk(L_H),N_3],n_2,[n_3,n_31,n_32],b_2,IdGroup(pai)]: 

table:=[];;
for i in output do
    for j in rkL do
        if i[2]=j[2] then
            Add(table, [i[1],j[1],i[2],[j[3],i[4]],i[3],
            [i[5],i[6][1],i[6][2]],23-j[3]+i[3]+i[6][1]+2*i[6][2],i[7]]);
        fi;
    od;
od;
\end{lstlisting}

\subsection{Numerical obstruction to deformation equivalence}\label{app:obstruction}
Let $Y$ be a terminalization of a quotient $F(X)/G$ as constructed in Section \ref{sec:term}, and assume that $Y$ has simply connected regular locus. By \cite[Proposition 3.19 and 3.21]{Menet2022}, if $Y$ is deformation equivalent to a Fujiki fourfold $S(H)_\theta^{[2]}$ or to a terminalization of a symplectic quotient $S^{[2]}/H$, for a $\textrm{K3}$ surface $S$, then $$\left(\tfrac{|G|}{|H|}\right)^{\tfrac{1}{2}}$$ is rational.
Similarly, if $Y$ is deformation equivalent to a terminalization of a symplectic quotient $\textrm{Kum}_2(A)/H$ for an abelian surface $A$, then 
$$\left(\tfrac{|G|}{3|H|}\right)^{\tfrac{1}{2}}$$ is rational.

The GAP code below compares the terminalizations in Table \ref{table:term} with the Fujiki fourfolds studied in \cite{Menet2022}, and the terminalizations constructed in \cite{BertiniGrossiMauriMazzon2024}. Whenever the second Betti numbers coincide, it checks whether the rationality conditions above are satisfied. As input, the code takes  
\begin{itemize}
    \item a list \texttt{list}, whose entries are of the form $[\text{IdGroup}(G), b_2(Y), |\Aut^s(X)|]$, one for each terminalization $Y$ in Table \ref{table:term} with simply connected regular locus;
    \item the lists \texttt{vFuj}, \texttt{vK3} and \texttt{vKum} which, for a given value $b$, contain the cardinalities of the groups $H$ such that the corresponding Fujiki variety $S(H)_\theta^{[2]}$, terminalization of $S^{[2]}/H$, or of $\textrm{Kum}_2(A)/H$, respectively, has second Betti number equal to $b$.
\end{itemize}
The program produces the lists \texttt{listFuj}, \texttt{listK3}, and \texttt{listKum}, consisting of all terminalizations $Y$ which either have a different second Betti number or share the same $b_2$ but fail the numerical condition above when compared with, respectively, any Fujiki fourfold, any terminalization of symplectic quotients of a Hilbert square, or any terminalization of Kummer fourfolds from \cite{Menet2022,BertiniGrossiMauriMazzon2024}. Finally the list \texttt{listnew} contains the terminalizations $Y$ which certainly represent new deformation classes.

\begin{lstlisting}[language=GAP]
#List of [GroupID(G), b_2(Y), |Aut^s(X)|], for any terminalization Y of F(X)/G with simply connected regular locus
\end{lstlisting}
\begin{multicols}{3}
\begin{lstlisting}[language=GAP]
list:=[ [ 
[ 2, 1 ], 16, 360 ], 
[ [ 4, 2 ], 14, 360 ], 
[ [ 6, 1 ], 10, 360 ], 
[ [ 8, 3 ], 11, 360 ], 
[ [ 10, 1 ], 8, 360 ], 
[ [ 12, 4 ], 10, 360 ], 
[ [ 18, 4 ], 8, 360 ], 
[ [ 24, 12 ], 8, 360 ], 
[ [ 24, 8 ], 8, 360 ],
[ [ 36, 10 ], 8, 360 ], 
[ [ 60, 5 ], 6, 360 ], 
[ [ 72, 43 ], 7, 360 ], 
[ [ 120, 34 ], 6, 360 ], 
[ [ 360, 120 ], 5, 360 ], 
[ [ 2, 1 ], 16, 660 ], 
[ [ 4, 2 ], 14, 660 ], 
[ [ 6, 1 ], 10, 660 ], 
[ [ 10, 1 ], 8, 660 ], 
[ [ 12, 4 ], 10, 660 ], 
[ [ 60, 5 ], 6, 660 ], 
[ [ 660, 13 ], 4, 660 ], 
[ [ 2, 1 ], 16, 720 ], 
[ [ 4, 2 ], 14, 720 ], 
[ [ 6, 1 ], 10, 720 ], 
[ [ 8, 3 ], 11, 720 ], 
[ [ 10, 1 ], 8, 720 ], 
[ [ 18, 4 ], 8, 720 ], 
[ [ 24, 12 ], 8, 720 ], 
[ [ 60, 5 ], 6, 720 ], 
[ [ 360, 118 ], 5, 720 ], 
[ [ 2, 1 ], 16, 1944 ], 
[ [ 3, 1 ], 7, 1944 ], 
[ [ 6, 2 ], 8, 1944 ], 
[ [ 6, 1 ], 10, 1944 ], 
[ [ 18, 4 ], 8, 1944 ], 
[ [ 18, 3 ], 8, 1944 ], 
[ [ 54, 13 ], 8, 1944 ], 
[ [ 54, 8 ], 8, 1944 ], 
[ [ 162, 46 ], 8, 1944 ], 
[ [ 486, 249 ], 8, 1944 ], 
[ [ 2, 1 ], 16, 2520 ], 
[ [ 4, 2 ], 14, 2520 ], 
[ [ 6, 1 ], 10, 2520 ], 
[ [ 8, 3 ], 11, 2520 ], 
[ [ 10, 1 ], 8, 2520 ], 
[ [ 12, 4 ], 10, 2520 ], 
[ [ 18, 4 ], 8, 2520 ], 
[ [ 24, 12 ], 8, 2520 ], 
[ [ 24, 8 ], 8, 2520 ], 
[ [ 60, 5 ], 6, 2520 ], 
[ [ 72, 43 ], 7, 2520 ], 
[ [ 120, 34 ], 6, 2520 ], 
[ [ 168, 42 ], 5, 2520 ], 
[ [ 360, 118 ], 5, 2520 ], 
[ [ 2520, 0 ], 4, 2520 ], 
[ [ 2, 1 ], 16, 29160 ], 
[ [ 3, 1 ], 7, 29160 ], 
[ [ 4, 2 ], 14, 29160 ],
[ [ 6, 1 ], 10, 29160 ], 
[ [ 6, 2 ], 8, 29160], 
[ [ 8, 3 ], 11, 29160], 
[ [ 9, 2 ], 7, 29160], 
[ [ 10, 1 ], 8, 29160], 
[ [ 12, 4 ], 10, 29160], 
[ [ 18, 4 ], 8, 29160], 
[ [ 18, 3 ], 8, 29160], 
[ [ 18, 5 ], 8, 29160], 
[ [ 24, 12 ], 8, 29160], 
[ [ 24, 8 ], 8, 29160], 
[ [ 27, 5 ], 11, 29160], 
[ [ 36, 10 ], 8, 29160], 
[ [ 36, 12 ], 8, 29160], 
[ [ 54, 13 ], 8, 29160], 
[ [ 54, 13 ], 8, 29160], 
[ [ 54, 8 ], 8, 29160], 
[ [ 54, 12 ], 10, 29160], 
[ [ 60, 5 ], 6, 29160], 
[ [ 72, 43 ], 7, 29160], 
[ [ 72, 22 ], 7, 29160], 
[ [ 72, 40 ], 7, 29160], 
[ [ 81, 15 ], 23, 29160], 
[ [ 108, 40 ], 8, 29160], 
[ [ 108, 38 ], 10, 29160], 
[ [ 162, 52 ], 16, 29160], 
[ [ 162, 46 ], 8, 29160], 
[ [ 162, 10 ], 8, 29160], 
[ [ 216, 158 ], 8, 29160], 
[ [ 324, 167 ],14, 29160], 
[ [ 360, 118 ],5, 29160], 
[ [ 486, 249 ],8, 29160], 
[ [ 486, 166 ],10, 29160], 
[ [ 648, 704 ],7, 29160], 
[ [ 648, 722 ],11,29160], 
[ [ 810, 101 ],8,29160], 
[ [ 1458, 1229 ],8,29160], 
[ [ 1944, 3877 ],8,29160], 
[ [ 4860, 0 ],6,29160], 
[ [ 29160, 0 ],5,29160], 
[ [ 2, 1 ], 16, 48 ], 
[ [ 4, 2 ], 14, 48 ], 
[ [ 6, 1 ], 10, 48 ], 
[ [ 8, 3 ], 11, 48 ], 
[ [ 12, 4 ], 10, 48 ], 
[ [ 48, 29 ], 6, 48 ] ];;
\end{lstlisting}
\end{multicols}
\begin{lstlisting}[language=GAP]
#List of [b,[|H_1|,|H_2|,..]], which, for a given value b, contains the cardinalities of the groups H_i such that the corresponding Fujiki variety S(H)_\theta^{[2]} has second Betti number equal to b
vFuj:=[[4,[144]],[5,[36,48,96]],[6,[18,24,36,48,96]],[7,[9,12,48,72]],
    [8,[6,12,16,18,24,36]],[10,[4,6,8,12,48]],[11,[3,8,32]],[14,[4]],[16,[2]] ];;
    
#List of [b,[|H_1|,|H_2|,..]], which, for a given value b, contains the cardinalities of the groups H_i such that the corresponding terminalization of S^{[2]}/H has second Betti number equal to b
vK3:=[[5,[168,360]],[6,[60,120]],[8,[10]]];;

#List of [b,[|H_1|,|H_2|,..]], which, for a given value b, contains the cardinalities of the groups H_i such that the corresponding terminalization of Kum_2(A)/H has second Betti number equal to b 
vKum:=[[7,[3,24,216,1944]],[8,[2,6,18,54,162,486]],[10,[18,162]],[11,[9]],
    [16,[54]],[23,[27]]];;

listFuj:=[];;
for i in list do
    b:=0;
    for j in vFuj do
        if i[2]=j[1] then
            b:=b+1;
            a:=0;
            for h in j[2] do
                q:=i[1][1]*h^-1;
                s:=ER(q);
                if IsRat(s)=true then
                    a:=1;
                fi;
            od;
            if a=0 then
                Add(listFuj,i);
            fi;
        fi;
    od;
    if b=0 then
        Add(listFuj,i);
    fi;
od;
listK3:=[];;
for i in list do
    b:=0;
    for j in vK3 do
        if i[2]=j[1] then
            a:=0;
            b:=b+1;
            for h in j[2] do
                q:=i[1][1]*h^-1;
                s:=ER(q);
                if IsRat(s)=true then
                    a:=1;
                fi;
            od;
            if a=0 then
                Add(listK3,i);
            fi;
        fi;
    od;
    if b=0 then
        Add(listK3,i);
    fi;
od;
listKum:=[];;
for i in list do
    b:=0;
    for j in vKum do
        if i[2]=j[1] then
            a:=0;
            b:=b+1;
            for h in j[2] do
		        f:=1/3;
                q:=f*i[1][1]*h^-1;
                s:=ER(q);
                if IsRat(s)=true then
                    a:=1;
                fi;
            od;
            if a=0 then
                Add(listKum,i);
            fi;
        fi;
    od;
    if b=0 then
        Add(listKum,i);
    fi;
od;
listnew1:=Intersection(listFuj,listK3);;
listnew:=Intersection(listnew1,listKum);
\end{lstlisting}

\printbibliography

\end{document}